\documentclass[11pt,twoside]{article}

\setlength{\textwidth}{160mm} \setlength{\textheight}{210mm}
\setlength{\parindent}{8mm} \frenchspacing
\setlength{\oddsidemargin}{0pt} \setlength{\evensidemargin}{0pt}
\thispagestyle{empty}
\usepackage{mathrsfs,amsfonts,amsmath,amssymb}
\usepackage{latexsym}
\pagestyle{myheadings}
\markboth{\centerline{\sc{\small }}} {\centerline{\sc{\small
I.~D.~Shkredov}}}
\newtheorem{satz}{Theorem}
\newtheorem{proposition}[satz]{Proposition}
\newtheorem{theorem}[satz]{Theorem}
\newtheorem{lemma}[satz]{Lemma}
\newtheorem{definition}[satz]{Definition}
\newtheorem{corollary}[satz]{Corollary}
\newtheorem{remark}[satz]{Remark}
\newtheorem{example}[satz]{Example}

\def\N{\mathbb {N}}
\def\Z{\mathbb {Z}}
\def\E{\mathsf {E}}
\def\R{\mathbb {R}}
\def\F{\mathbb {F}}

\def\C{{\cal C}}

\def\G{\Gamma}

\def\d{\delta}
\def\D{\Delta}

\def\({\big (}
\def\){\big )}

\def\c{\circ}

\def\ls{\leqslant}
\def\gs{\geqslant}

\def\_phi{\varphi}
\def\eps{\varepsilon}

\def\Gr{{\mathbf G}}
\def\FF{\widehat}
\def\ov{\overline}
\def\Spec{{\rm Spec\,}}

\def\ge{\gs}
\def\le{\ls}

\def\T{\mathsf{T}}
\def\Sym{\mathsf {Sym}}
\def\C{{\mathbb C}}

\author{Shkredov I.D.}
\title{ An application of the sum--product phenomenon to sets having no solutions of several  linear equations
\footnote{
This work was supported by grant
Russian Scientific Foundation RSF 14--11--00433.}
}
\date{}
\begin{document}
\maketitle


\begin{abstract}
We prove that for an arbitrary
$\kappa \le \frac{1}{3}$
any subset of $\F_p$ avoiding $t$ linear equations with three variables has size less than $O(p/t^\kappa)$.
We also find  several applications to problems about so--called non--averaging sets, number of collinear triples and mixed energies.
\end{abstract}

\section{Introduction}

Let $p$ be a prime number, $\F_p$ be the finite field and $A\subseteq \F_p$ be a set.
Consider a linear equation
\begin{equation}\label{f:eq_intr}
    c_1 x_1 + \dots + c_k x_k = b \,,
\end{equation}
where $k\ge 3$ and $c_1,\dots, c_k \neq 0$.
We say that our set $A$ {\it avoids equation} (\ref{f:eq_intr}) if there are no tuples $(x_1, \dots, x_k) \in A^k$ satisfying  (\ref{f:eq_intr}).
Sets avoiding linear equations is a
well--known
subject of Additive Combinatorics and Number Theory, see, e.g., classical papers \cite{Ruzsa-I,Ruzsa-II} about this question.
It is known that if $b=0$ and $c_1+\dots+c_k=0$, then $|A|=o(p)$ as $p\to \infty$ but
in the other cases
one can easily construct a set of positive density avoiding
(\ref{f:eq_intr}).
In this paper we have to deal with the case $k=3$ but instead of one equation we consider several, say,  $t$ of them.
Such problems are considered in articles  \cite{Ruzsa-II}, \cite{CS}, for example.
 For us the basic question is the following: is it true that $|A| = o_t (p)$ as   $t\to \infty$ (and $p\to \infty$ of course)?
Notice  that we do not require $b=0$ or $c_1+c_2+c_3=0$.
It turns out that the answer is positive and the problem is connected with the sum--product phenomenon, see, e.g., \cite{Tao_Vu_book}.
Let us formulate a special but an useful case of the main result of this paper
(our general Theorem \ref{t:main} is contained in section \ref{sec:proof} below).

\begin{theorem}
    Let $A \subseteq \F_p$ be a set, $|A| \gg p^{39/47}$.
    Suppose that $A$ avoids $t$ equations of the form
\begin{equation}\label{cond:main_intr}
    x_1 + a_j x_2 + b_j x_3 = b_j \,,
\end{equation}
    where all $a_j$, $b_j$ are nonzero and
    each point $(a_i,b_i)$ has either a unique abscissa or ordinate or its ratio.
    Then for any $\kappa < \frac{3}{20}$ one has
\begin{equation}\label{f:main_intr1}
    |A| = O\left( \frac{p}{t^\kappa} \right) \,.
\end{equation}
    In another direction,
    there is a set $A\subseteq \F_p$ avoiding $t$ linear equations of form (\ref{cond:main_intr})
    such that
\begin{equation}\label{f:main_intr2}
    |A| \gg \frac{p}{t^{1/2}} \,.
\end{equation}
\label{t:main_intr}
\end{theorem}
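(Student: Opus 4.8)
The plan is to turn each avoidance hypothesis into a statement about a dilated sumset and then feed it into sum--product estimates. Writing the $j$-th equation as $x_1 + a_j x_2 = b_j(1 - x_3)$, the set $A$ avoids (\ref{cond:main_intr}) for the index $j$ if and only if $(A + a_j A) \cap b_j(1 - A) = \emptyset$; since $|b_j(1 - A)| = |A|$, this already forces $|A + a_j A| \le p - |A|$, and if $m$ of the equations share a common abscissa $a_j = a$ it forces the stronger inequality $|A + aA| \le p - |\bigcup_{j:\, a_j = a} b_j(1 - A)|$. So I would recast the problem into two questions: how many dilation parameters $\lambda$ can satisfy $|A + \lambda A| \le p - |A|$, and how large a union $\bigcup_k b_k(1 - A)$ of dilates of $1 - A$ can a sumset $A + aA$ avoid?

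For the first question I would use the Fourier (second moment) identity $\sum_{\lambda \in \F_p}\left(\E^+(A, \lambda A) - |A|^4/p\right) = |A|^2 (p - |A|)$, which, together with Cauchy--Schwarz (the bound $|A + \lambda A| \le p - |A|$ gives $\E^+(A, \lambda A) \ge |A|^4/(p - |A|)$) and Markov's inequality, limits the number of such $\lambda$ to $O((p/|A|)^3)$; the hypothesis $|A| \gg p^{39/47}$ is precisely what makes the diagonal term $|A|^4/p$ dominate so that this estimate is non-trivial. Writing $\delta = |A|/p$, this settles the case in which at least $t/3$ of the equations have an abscissa repeated by no other equation: then there are at least $t/3$ admissible values of $\lambda = a_j$, so $t \ll \delta^{-3}$, i.e. $|A| \ll p\, t^{-1/3}$, which is even stronger than required.

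In the complementary case the non-degeneracy hypothesis forces more than $2t/3$ of the equations to have a non-repeated ordinate or ratio, and hence a non-repeated ordinate among the equations sharing its abscissa. Here I would group these equations by common abscissa $a$, bound $|\bigcup_k b_k(1 - A)|$ from below by a second moment (overlap) argument expressed through the quantities $|(1 - A) \cap \mu(1 - A)|$ over the ratios $\mu = b_l/b_k$ that occur, and control those quantities again by the same exceptional-set estimate. The main obstacle lives here: the overlaps $|(1 - A) \cap \mu(1 - A)|$ can be abnormally large for exactly the ratios $\mu$ that arise --- that is, when $1 - A$ carries multiplicative structure --- and handling this (or exploiting that structure on a separate track) is where a genuine sum--product input becomes unavoidable, such as a point--line incidence bound of Rudnev type or an iterated higher-energy estimate (a \emph{mixed energy}, in the language of the abstract). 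Balancing the two mechanisms and optimizing the resulting chain of inequalities is what pushes the final exponent down to $3/20$, below the $1/3$ of the distinct-abscissa case, and what dictates the threshold $p^{39/47}$.

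For the lower bound (\ref{f:main_intr2}) I would give an explicit construction. Take $A = \{1, 2, \dots, \lfloor c\, p/\sqrt{t}\rfloor\}$ viewed inside $\F_p$ for a small absolute constant $c$, and choose the $t$ equations so that $(a_j, b_j)$ runs over $t$ pairs of positive integers bounded by $O(\sqrt{t})$ with pairwise distinct ratios $b_j/a_j$; this is possible because the number of reduced fractions with numerator and denominator at most $C\sqrt{t}$ exceeds $t$ once $C$ is a suitable absolute constant, and distinctness of the ratios already supplies the required non-degeneracy (every point then has a unique ratio). For such parameters and such an interval one has, for every $x_1, x_2, x_3 \in A$, the genuine inequality of integers $b_j < x_1 + a_j x_2 + b_j x_3 \le \lfloor c\, p/\sqrt{t}\rfloor\,(1 + a_j + b_j) < p$ once $c$ is small enough, so the left-hand side is never congruent to $b_j$ modulo $p$; hence $A$ avoids all $t$ equations while $|A| \gg p/\sqrt{t}$.
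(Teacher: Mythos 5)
Your lower--bound construction is correct (it is a cleaner variant of the paper's Proposition \ref{p:low_bounds}), and your unique--abscissa case is also correct: the identity $\sum_{\lambda}\left(\E^+(A,\lambda A)-|A|^4/p\right)=|A|^2(p-|A|)$ together with $\E^+(A,\lambda A)\ge |A|^4/|A+\lambda A|$ does bound the number of $\lambda$ with $|A+\lambda A|\le p-|A|$ by $O\left((p/|A|)^3\right)$, giving $|A|\ll p\,t^{-1/3}$ in that case (note this needs only $|A|\gg p^{2/3}$; the threshold $p^{39/47}$ has nothing to do with making this second--moment count nontrivial --- in the paper it comes from side conditions such as (\ref{cond:technical_mc_add}) in the incidence machinery). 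The genuine gap is the complementary case, where the equations are separated only by ordinate or ratio --- which under the hypothesis may be \emph{all} $t$ of them. There you only describe a plan (group by abscissa, lower--bound $|\bigcup_k b_k(1-A)|$ via overlaps $|(1-A)\cap\mu(1-A)|$), and you yourself flag that ``the main obstacle lives here'' and that an unspecified sum--product or incidence input is needed; you never state which estimate would be applied, to which sets, or how the exponents combine, and neither the value $3/20$ nor the role of $p^{39/47}$ is derived. This is the heart of the theorem and it is missing. For comparison, the paper handles it on the dual side: avoidance forces the Fourier mass onto $B=\Spec_{\delta/6}(A)$, a set of size $O(\delta^{-3})$, and the sum--product input (Theorem \ref{t:RRS} via Rudnev's incidence bound, packaged in Proposition \ref{p:technical_mc}) is applied to subsets of this small dual set, not to $A$ or $1-A$ --- a step your sketch never reaches.

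It is worth noting that your own mechanism would close the gap if you symmetrized it instead of attempting the union argument: dividing the $j$-th equation by $b_j$, avoidance says exactly that $\left((1-A)-b_j^{-1}A\right)\cap (a_jb_j^{-1})A=\emptyset$ and that $\left((1-A)-(a_jb_j^{-1})A\right)\cap b_j^{-1}A=\emptyset$, so $\lambda=-b_j^{-1}$ (resp. $\lambda=-a_j/b_j$) is an exceptional dilate for the mixed sumset $(1-A)+\lambda A$, i.e. $|(1-A)+\lambda A|\le p-|A|$. The same second--moment count for the pair $(1-A,A)$ again gives $O\left((p/|A|)^3\right)$ exceptional $\lambda$, and unique ordinates (resp. unique ratios) produce pairwise distinct $\lambda$'s, so in all three cases $t\ll (p/|A|)^3$ and $|A|\ll p\,t^{-1/3}$ --- stronger than the claimed $3/20$ and elementary; this is in the spirit of Schoen's simplification mentioned after Remark \ref{r:expectation}, and genuinely different from the paper's spectrum/sum--product proof. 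But as submitted, with the ordinate/ratio case deferred to an unexecuted incidence argument, the proposal does not prove the upper bound.
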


Actually, we prove that $\kappa$ in Theorem \ref{t:main_intr} can be doubled in many cases, see section \ref{sec:proof}, so
the
power of $t$ is between
$0.3$
and $0.5$ for such wide class of equations.
The author thinks that
$0.3$
can be improved slightly but he does not believe that  this constant
can be replaced by something strictly greater than $1/3$,
at least it requires  some new ideas,
would imply  a considerable progress in the area and
seems unattainable at the moment
(see Example \ref{ex:Sarkozy} in section \ref{sec:examples} and discussion after Remark \ref{r:expectation}).

The method of the proof is based on precise incidences results  from \cite{misha} and some applications of these results from  \cite{AMRS},
\cite{RRS}, \cite{RSS}.
Usually, theorems of such a sort have to deal with small subsets of $\F_p$.
Considering a dual set, that is, the {\it spectrum} of a set
or, in other words,
the set of large exponential sums,  see section \ref{sec:preliminaries},
we show that these results are applicable sometimes for
{\it large} subsets of $\F_p$, exactly  as in (\ref{f:main_intr1}), (\ref{f:main_intr2}).
In particular, we
prove
the following fact, which is interesting in its own right:
the spectrum always has small multiplicative energy, see Theorem \ref{t:energy*spec} below.

The simplest example of system (\ref{cond:main_intr}) can be obtained if one consider a multiplicative subgroup
$\Gamma \subseteq \F_p \setminus \{ 0\}$ and take just one linear equation $\gamma = \alpha s_1 + \beta s_2$, where $\alpha, \beta \neq 0$ are fixed, $\gamma \in \Gamma$ and $s_1,s_2$
belong to $\Gamma$.
Then  this equation generates $|\Gamma|^2$ another equations $x=\alpha \gamma' y + \beta \gamma'' z$, where $\gamma', \gamma'' \in \Gamma$  and thus
can be studied by the methods of
our
paper.
Another
nontrivial example is given by so--called {\it collinear triples} of the Cartesian product $A\times A$ of a set $A$.
It is easy to see that three points $(a_1,a_2), (b_1,b_2), (c_1,c_2) \in A\times A$ are collinear if
$\frac{b_1-a_1}{c_1-a_1} =  \frac{b_2-a_2}{c_2-a_2} := \lambda$.
Thus any $\lambda$ generates a linear equation $b+(\lambda-1)a - \lambda c= 0$, $a,b,c \in A$ and hence the number of collinear triples is connected with a system of linear equations of type (\ref{cond:main_intr}),
for more details, see section \ref{sec:applications}.
Further
applications can be found in this section.

The paper is organized as follows.
In sections \ref{sec:definitions}, \ref{sec:preliminaries} we give a list of definitions and results, which will be used further in the text.
In section \ref{sec:examples} we consider some examples of families of sets avoiding several equations and prove  lower bound (\ref{f:main_intr2}).
Section \ref{sec:energy} is devoted to the spectrum of a set.
Here we prove in particular, that the spectrum has small multiplicative energy and contains a large subset with even smaller multiplicative energy.
In the next section
we obtain our main Theorem \ref{t:main} which implies Theorem \ref{t:main_intr}.
Finally, section \ref{sec:applications} contains further applications of the main result.

The author is grateful to Sergey Yekhanin, Tomasz Schoen
and Anh Vinh Le
for useful discussions.

\section{Definitions}
\label{sec:definitions}

Let $p$ be a prime number, $\F_p$ be the finite field and denote by $\F_p^*$ the set $\F_p^* = \F_p \setminus \{ 0 \}$.
The field $\F_p$ is the main subject of our paper but let us consider a slightly general context which we will use sometimes.

Let $\Gr$ be an abelian group.
If $\Gr$ is finite, then denote by $N$ the cardinality of $\Gr$.
It is well--known~\cite{Rudin_book} that the dual group $\FF{\Gr}$ is isomorphic to $\Gr$ in this case.
Let $f$ be a function from $\Gr$ to $\mathbb{C}.$  We denote the Fourier transform of $f$ by~$\FF{f},$
\begin{equation}\label{F:Fourier}
  \FF{f}(\xi) =  \sum_{x \in \Gr} f(x) e( -\xi \cdot x) \,,
\end{equation}
where $e(x) = e^{2\pi i x}$
and $\xi$ is a homomorphism from $\FF{\Gr}$ to $\R/\Z$ acting as $\xi : x \to \xi \cdot x$.
We rely on the following basic identities
\begin{equation}\label{F_Par}
    \sum_{x\in \Gr} |f(x)|^2
        =
            \frac{1}{N} \sum_{\xi \in \FF{\Gr}} \big|\widehat{f} (\xi)\big|^2 \,,
\end{equation}
\begin{equation}\label{svertka}
    \sum_{y\in \Gr} \Big|\sum_{x\in \Gr} f(x) g(y-x) \Big|^2
        = \frac{1}{N} \sum_{\xi \in \FF{\Gr}} \big|\widehat{f} (\xi)\big|^2 \big|\widehat{g} (\xi)\big|^2 \,,
\end{equation}
and
\begin{equation}\label{f:inverse}
    f(x) = \frac{1}{N} \sum_{\xi \in \FF{\Gr}} \FF{f}(\xi) e(\xi \cdot x) \,.
\end{equation}
If
$$
    (f*g) (x) := \sum_{y\in \Gr} f(y) g(x-y) \quad \mbox{ and } \quad
        (f\circ g) (x) := \sum_{y\in \Gr} f(y) g(y+x)
        \,,
$$
 then
\begin{equation}\label{f:F_svertka}
    \FF{f*g} = \FF{f} \FF{g} \quad \mbox{ and } \quad \FF{f \circ g} = \FF{f^c} \FF{g} = \ov{\FF{\ov{f}}} \FF{g} \,,
\end{equation}
where for a function $f:\Gr \to \mathbb{C}$ we put $f^c (x):= f(-x)$.
 Clearly,  $(f*g) (x) = (g*f) (x)$ and $(f\c g)(x) = (g \c f) (-x)$, $x\in \Gr$.
 The $k$--fold convolution, $k\in \N$  we denote by $*_k$,
 so $*_k := *(*_{k-1})$.
    In the same way we use multiplicative convolution of two functions $f,g : \F_p \to \C$ which we denote as
$$
    (f\otimes g) (x) := \sum_{y\in \F^*_p} f(y) g(x y^{-1}) \,.
$$
Write for any function $f: \Gr \to \C$
$$
    \| f \|'_\infty := \max_{x\neq 0} |f(x)| \,.
$$

We use in our paper  the same letter to denote a set
$S\subseteq \Gr$ and its characteristic function $S:\Gr\rightarrow \{0,1\}.$
Write $\E^+(A,B)$ for the {\it additive energy} of two sets $A,B \subseteq \Gr$
(see, e.g., \cite{Tao_Vu_book}), that is,
$$
    \E^+(A,B) = |\{ a_1+b_1 = a_2+b_2 ~:~ a_1,a_2 \in A,\, b_1,b_2 \in B \}| \,.
$$
If $A=B$ we simply write $\E^+(A)$ instead of $\E^+(A,A).$
In the same way one can define the {\it multiplicative energy} of two sets $A,B \subseteq \F_p$ as
$$
    \E^\times (A,B) = |\{ a_1 b_1 = a_2 b_2 ~:~ a_1,a_2 \in A,\, b_1,b_2 \in B \}| \,.
$$
    Sometimes we write $\E(A,B)$ if we do not specialise the energy.
Further clearly,
\begin{equation}\label{f:energy_convolution}
    \E^+(A,B) = \sum_x (A*B) (x)^2 = \sum_x (A \circ B) (x)^2 = \sum_x (A \circ A) (x) (B \circ B) (x)
    \,.
\end{equation}
and by (\ref{svertka}),
\begin{equation}\label{f:energy_Fourier}
    \E^+(A,B) = \frac{1}{N} \sum_{\xi} |\FF{A} (\xi)|^2 |\FF{B} (\xi)|^2 \,.
\end{equation}
Also put
$$
    \E^{+}_* (A) = \frac{1}{N} \sum_{\xi \neq 0} |\FF{A} (\xi)|^4  = \E^{+} (A) - \frac{|A|^4}{N} \,.
$$
Let
$$
   \T^+_k (A) := | \{ a_1 + \dots + a_k = a'_1 + \dots + a'_k  ~:~ a_1, \dots, a_k, a'_1,\dots,a'_k \in A \} |
    =
        \frac{1}{N} \sum_{\xi} |\FF{A} (\xi)|^{2k}
        \,.
$$
Also let
$$
    \sigma^+_k (A) := (A*_k A)(0)=| \{ a_1 + \dots + a_k = 0 ~:~ a_1, \dots, a_k \in A \} | \,.
$$
Notice that for a symmetric set $A$, that is, $A=-A$ one has $\sigma_2
(A) = |A|$ and $\sigma^+_{2k} (A) = \T^+_k (A)$.
Having a set $P\subseteq A-A$ we write $\sigma^+_P (A) := \sum_{x\in P} (A\c A) (x)$
and
$\E^+_P (A) := \sum_{x\in P} (A\c A)^2 (x)$.

\bigskip

Given two sets $Q,R\subseteq \Gr$ and a real number $t\ge 1$, we define
$$
    \Sym^+_{t} (Q,R) := \{ x ~:~ |Q\cap (x-R)| \ge t \}
$$
and similar $\Sym^\times_{t} (Q,R)$.

\bigskip

For a positive integer $n,$ we set $[n]=\{1,\ldots,n\}.$
All logarithms are to base $2.$ Signs $\ll$ and $\gg$ are the usual Vinogradov's symbols, that is, $a\ll b$ iff $a=O(b)$.
     We will write $a \lesssim b$ or $b \gtrsim a$ if $a = O(b \cdot \log^c |A|)$, where $A$ is a fixed set and $c>0$ is an absolute constant.
    Notation $a\sim b$ means $a \lesssim b$ and, simultaneously,  $b \lesssim a$.


\section{Examples of sets avoiding several linear equations}
\label{sec:examples}

First of all, let us recall the definitions.
Let $\mathcal{E}$ be a finite family of equations of the form
\begin{equation}\label{f:E_form}
    a_j x + b_j y + c_j z = d_j \,,
\end{equation}
where all $a_j, b_j, c_j$ are nonzero and such that any two triples $(a_j,b_j,c_j)$, $(a'_j,b'_j,c'_j)$ corresponding some equations from $\mathcal{E}$ are not proportional.
In other words, we consider triples  $(a_j,b_j,c_j)$ from $\F_p^* \times \F_p^* \times \F_p^*$ up to
an equivalence relation  $\sim$, namely,
$(a,b,c) \sim (a',b',c')$ iff for some nonzero $\lambda$ the following holds $a' = \lambda a$, $b' = \lambda b$, and $c' = \lambda c$.
Thus the family $\mathcal{E}$ corresponds to a subset of two--dimensional projective plane.
We denote this set as $S(\mathcal{E})$.
We write $|\mathcal{E}|$ for  the cardinality of $S(\mathcal{E})$.
Also notice that
we do not require
$d_j =0$ or $a_j+b_j+c_j=0$
and hence  so--called non--affine equations (see \cite{Ruzsa-I,Ruzsa-II}) are considered by us as well.
We say that a set $A\subseteq \F_p$ is {\it avoiding family $\mathcal{E}$} if there is no $j\in [|\mathcal{E}|]$
and $x,y,z\in A$ such that $a_j x + b_j y + c_j z = d_j$.
In other words, the set $A$ does not satisfy {\it all} equations from $\mathcal{E}$.
Sometimes a little bit more general setting is required.
Let  $A_1,A_2,A_3 \subseteq \F_p$ be three sets.
We say that the triple $(A_1,A_2,A_3)$ {\it avoids family $\mathcal{E}$} if
for any $j\in [|\mathcal{E}|]$
and all $x \in A_1$, $y \in A_2$, $z\in A_3$ we have $a_j x + b_j y + c_j z \neq d_j$.

Of course the size of a set $A$ avoiding equations (\ref{f:E_form}) depends on the geometry of the set
$\mathcal{E}$ or, equivalently, on the set $S(\mathcal{E})$.
We consider
several
rather rough  characteristics  of the set $\mathcal{E}$ and study them.

\begin{definition}
By $\mathcal{T} (\mathcal{E})$ denote the size of the maximal subset in the intersection of $\mathcal{E}$ with one of three planes
$\{ x=1\}, \{y=1\}, \{ z=1\}$
with the property that all non--fixed coordinates in the intersection  are different.
\label{def:T}
\end{definition}

Thus $\mathcal{T} (\mathcal{E}) \le |\mathcal{E}|$ and the bound is attained if, say,
$S(\mathcal{E}) = \{1\} \times \{ (e,e) ~:~ e\in [|\mathcal{E}|] \}$.
Now
let us
obtain a lower bound for the quantity $\mathcal{T} (\mathcal{E})$.

\begin{lemma}
    We have $\mathcal{T} (\mathcal{E}) \ge |\mathcal{E}|^{1/2}$.
\label{l:TE_T}
\end{lemma}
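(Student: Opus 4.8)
The plan is to think of $S(\mathcal{E})$ as a set of $|\mathcal{E}|$ points in the projective plane over $\F_p$, none of whose coordinates vanish, and to exploit the fact that every such point lies simultaneously on the three affine charts $\{x=1\}$, $\{y=1\}$, $\{z=1\}$: indeed, a triple $(a,b,c)$ with $a,b,c\neq 0$ can be normalised to $(1, b/a, c/a)$, to $(a/b, 1, c/b)$, or to $(a/c, b/c, 1)$. So for each of the three charts we get a set of $|\mathcal{E}|$ points in $\F_p^* \times \F_p^*$ (the pair of non-fixed coordinates). The quantity $\mathcal{T}(\mathcal{E})$ is, by Definition \ref{def:T}, at least the largest, over the three charts, of the size of a subset of these points with pairwise distinct values in each of the two non-fixed coordinates.

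First I would pass to the chart $\{x=1\}$ and consider the point set $P = \{ (b_j/a_j,\, c_j/a_j) : j \} \subseteq \F_p^* \times \F_p^*$, which has $|P| = |\mathcal{E}|$ since the original triples are pairwise non-proportional. A maximal subset of $P$ with all abscissae distinct \emph{and} all ordinates distinct is exactly a maximal "diagonal" — equivalently, a maximum matching in the bipartite graph whose two vertex classes are the set of occurring abscissae and the set of occurring ordinates, with an edge for each point of $P$. If this chart gives a matching of size only $m$, then by König's theorem the edges of $P$ are covered by $m$ lines of the form $\{\text{abscissa}=u\}$ or $\{\text{ordinate}=v\}$; hence some single vertical or horizontal line contains at least $|P|/m = |\mathcal{E}|/m$ points of $P$. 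So either $\mathcal{T}(\mathcal{E}) \ge m$, or there is a line in this chart with $\ge |\mathcal{E}|/m$ points of $P$ on it.

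Now I would run the same dichotomy on the other two charts. The key observation is that a "vertical" line in one chart, say $\{\, b/a = u \,\}$ in the chart $\{x=1\}$ — i.e. the set of triples $(a,b,c) \sim (1,u,c/a)$ — corresponds, in the chart $\{y=1\}$, to the horizontal line $\{\,a/b = 1/u\,\}$, along which the \emph{other} coordinate $c/b$ still varies over distinct values (distinct because the triples are non-proportional and already agree in the first two coordinates up to scaling). In other words, a large line in one chart becomes, in a suitable other chart, a large set of points with one coordinate fixed and the other coordinate all-distinct — which is precisely a feasible configuration for Definition \ref{def:T} and therefore already witnesses $\mathcal{T}(\mathcal{E})$ being at least that size. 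Chasing the three charts this way: if no chart yields a matching of size $\ge |\mathcal{E}|^{1/2}$, then each chart has a line with $> |\mathcal{E}|^{1/2}$ points; translating such a line into the chart where its "free" coordinate is the non-fixed all-distinct one gives $\mathcal{T}(\mathcal{E}) > |\mathcal{E}|^{1/2}$, a contradiction. Hence $\mathcal{T}(\mathcal{E}) \ge |\mathcal{E}|^{1/2}$.

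The main obstacle I anticipate is bookkeeping the projective-to-affine normalisations correctly so that "line in chart $A$" really does become "fixed-coordinate, distinct-other-coordinate set in chart $B$", and in particular checking that the distinctness built into Definition \ref{def:T} is preserved under these changes of chart (this is where the non-proportionality hypothesis on $\mathcal{E}$ gets used). Once that translation is set up cleanly, the counting is just König's theorem plus the pigeonhole bound $|P|/m$, and optimising $m$ against $|\mathcal{E}|/m$ gives the threshold $|\mathcal{E}|^{1/2}$.
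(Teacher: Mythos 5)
Your skeleton is the same as the paper's: either the chart contains a large subset with both non--fixed coordinates pairwise distinct, or many points of $S(\mathcal{E})$ lie on a single axis--parallel line, and that line is then converted by a change of chart into a configuration admissible for Definition \ref{def:T}. Your use of K\"onig's theorem plus pigeonhole to extract a line with at least $|\mathcal{E}|/m$ points is in fact a cleaner route than the paper's maximality--plus--mediant averaging. But the conversion step, as you state it, fails. Take your vertical line in the chart $\{x=1\}$, i.e.\ the points $(1,u,w_j)$ with $u$ fixed and the $w_j$ pairwise distinct. Passing to the chart $\{y=1\}$, i.e.\ dividing by the \emph{fixed} coordinate $u$, gives $(1/u,\,1,\,w_j/u)$: one non--fixed coordinate is still the constant $1/u$, so you again have a line, and this does \emph{not} satisfy Definition \ref{def:T}, which --- as you yourself correctly state at the start --- requires pairwise distinct values in \emph{each} of the two non--fixed coordinates. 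The sentence claiming that ``one coordinate fixed and the other coordinate all--distinct'' is ``precisely a feasible configuration for Definition \ref{def:T}'' is false (except for singletons), and with it your final contradiction collapses.

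The repair is small and is exactly the paper's move: normalise by the \emph{varying} coordinate instead. From $(1,u,w_j)$ pass to the chart $\{z=1\}$, obtaining $(1/w_j,\,u/w_j,\,1)$; since $u\neq 0$ and the $w_j$ are pairwise distinct, both non--fixed coordinates now take pairwise distinct values, so this set is admissible and gives $\mathcal{T}(\mathcal{E})\ge |\mathcal{E}|/m$. (In the paper this is the step sending the line $(x_0,q,1)$ to the points $(x_0 q^{-1},1,q^{-1})$ in the plane $\{y=1\}$.) A horizontal line is handled identically, again dividing by the coordinate that varies. With this correction your K\"onig/pigeonhole dichotomy yields $\mathcal{T}(\mathcal{E})\ge \max\{m,\,|\mathcal{E}|/m\}\ge |\mathcal{E}|^{1/2}$, as required, and the resulting proof is a legitimate (and slightly tidier) variant of the one in the paper.
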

\begin{proof}
Put $s=|S(\mathcal{E})| = |\mathcal{E}|$, $t=\mathcal{T} (\mathcal{E})$.
    Take a maximal subset $J$ of $[|\mathcal{E}|]$
    such that  $(x_j ,y_j ,1) \in \mathcal{E}$ and all elements $x_j$ as well as all elements $y_j$ are different.
    Clearly, $t \ge |J|$.
    Put $R = \{ (x_j,y_j)\}_{j\in J}$ and $S = \{ (x_j,y_j)\}_{j\in [s]}$.
    By the maximality of $R$ we see that any point of $S$ has either the same abscissa or the same ordinate with a point from $R$.
    Thus one can  split $S\setminus R$ into two sets $S_1$, $S_2$ and $R$ into sets $R_1,R_2$ such that any point from $S_1$, $S_2$ shares  common abscissa or ordinate (or both) with some point from $R_1$, $R_2$, respectively.
    Let us split points from $S\setminus R$ having common abscissa and ordinate with some points from $R$ in an arbitrary way.
    Let $r_1 = |R_1|$, $r_2 = |R_2|$, $s_1=|S_1|$, $s_2 = |S_2|$.
    Then, clearly, $r_1+r_2 = |J|$ and $s_1+s_2+r_1+r_2 = s$.
    Suppose that $r_1,r_2 > 0$.
    By average arguments there is some point $x_0$ such that the set
    $\{ (x_0, y,1) ~:~ (x_0,y) \in S_1\}$
    has size at least $s_1/r_1$.
    Similarly, there is some point $y_0$ such that the set
    $\{ (x, y_0,1) ~:~ (x,y_0) \in S_2\}$
    has size at least $s_2/r_2$.
    Without losing  of generality suppose that $\frac{s_2}{r_2} \le \frac{s_1}{r_1}$
    By a well--known property  of the median, we have
    $$
        \frac{s_2}{r_2} \le \frac{s_1+s_2}{r_1+r_2} = \frac{s}{|J|} -1 \le \frac{s_1}{r_1} \,.
    $$
    Hence there is a set $Q$ of the form  $Q = \{ (x_0,q,1) \in S \}$ of size $|Q| \ge s/|J|-1+1 = s/|J|$
    (we add in $Q$ a point from $R_1$).
    If $r_1$ or $r_2$ vanishes then it is easy to see that the existence of such $Q$ follows similarly and even simpler.
    Finally,
    the points $(x_0,q,1)$  from $Q$ are equivalent to $|Q|$ points of the form $(x_0 q^{-1}, 1, q^{-1})$, having different coordinates
    in the plane $\{ y=1\}$.
    Thus, $t\ge s/|J|$.
    Obviously, $\max_{|J|} \{ |J|, s/|J|\} \ge s^{1/2}$ and hence $t\ge s^{1/2}$.
    This completes the proof.
$\hfill\Box$
\end{proof}

\begin{remark}
    Let $\G$ be a subgroup of $\F_p^*$ and $S(\mathcal{E}) = (\G \times \G \times \G) / \sim$.
    Then in view of $\G/\G=\G$, we have  $|\mathcal{E}| = |\G|^2$ and it is easy to see that $\mathcal{T}(\mathcal{E}) =  |\G| = |\mathcal{E}|^{1/2}$.
    It follows that the bound of Lemma \ref{l:TE_T} is tight.
\label{r:tight1}
\end{remark}

\bigskip

    Let us consider another characteristic of the set $S(\mathcal{E})$.

\begin{definition}
 Take
 the intersection of $\mathcal{E}$ with any  of three planes $\{ x=1\}, \{y=1\}, \{ z=1\}$, say with $\{ z=1\}$.
We obtain points $(a_j,b_j,1) \in \mathcal{E}$.
Then by $\mathcal{T}_* (\mathcal{E})$ denote the size of a maximal subset $J$ of $[|\mathcal{E}|]$
such that for any $j\in J$ either
$a_j \neq a_i$ or $b_j \neq b_i$ or $a_j b_j^{-1} \neq a_i b_i^{-1}$ for all $i\in J$, $i\neq j$.
In other words, each point $(a_j,b_j)$ has either a unique abscissa or ordinate or its ratio.
\label{def:T_*}
\end{definition}

Thus $\mathcal{T}_* (\mathcal{E}) \le |\mathcal{E}|$ and the bound it attained if, say,
$S(\mathcal{E}) = \{1\} \times \{ 1 \} \times [|\mathcal{E}|]$.
Now we obtain a lower bound for the quantity $\mathcal{T}_* (\mathcal{E})$.

\begin{lemma}
    We have $\mathcal{T}_* (\mathcal{E}) \ge 2|\mathcal{E}|^{1/2}-1$.
\label{l:TE}
\end{lemma}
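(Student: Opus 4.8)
The plan is to exhibit, by an explicit construction, a subset $J\subseteq S(\mathcal{E})$ of size $\ge 2|\mathcal{E}|^{1/2}-1$ of the type counted by $\mathcal{T}_*$; the extra factor $2$ (compared with Lemma~\ref{l:TE_T}) will come from the fact that $\mathcal{T}_*$ permits three ``directions'' of uniqueness --- abscissa, ordinate, ratio --- rather than two. I would work in the plane $\{z=1\}$, so that $S(\mathcal{E})$ is a set of $s:=|\mathcal{E}|$ distinct points $(a_j,b_j)$, and record the elementary remark that two distinct such points agree in at most one of $a_j$, $b_j$, $a_jb_j^{-1}$ (agreement in two of them forces them to coincide). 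Hence a set $J$ certifies $\mathcal{T}_*(\mathcal{E})\ge|J|$ exactly when every point of $J$ is the unique element of $J$ carrying its abscissa, or the unique one carrying its ordinate, or the unique one carrying its ratio.

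I would then isolate two building blocks. First, any subset on which the abscissa is injective is good, so $\mathcal{T}_*(\mathcal{E})\ge n_A$, and likewise $\ge n_B$, $\ge n_R$, where $n_A,n_B,n_R$ count the distinct abscissas, ordinates and ratios. Second, a \emph{cross}: if $C$ is a full abscissa class and $D$ a full ordinate class, then $C\cup D$ is good, since each point of $C\setminus D$ is unique in $C\cup D$ in its ordinate, each point of $D\setminus C$ is unique in its abscissa, and the at most one point of $C\cap D$ is unique in its ratio; as $|C\cap D|\le1$ this gives $\mathcal{T}_*(\mathcal{E})\ge|C|+|D|-1$.

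Next I would split into cases by the largest abscissa multiplicity $c$ and the largest ordinate multiplicity $d$. If $cd\ge s$, take the cross of a largest abscissa class and a largest ordinate class: by the second building block and AM--GM, $\mathcal{T}_*(\mathcal{E})\ge c+d-1\ge 2\sqrt{cd}-1\ge 2\sqrt{s}-1$. If $cd<s$, I would build a \emph{comb}: fix an abscissa $a^*$ of multiplicity $c$, let $B^*$ be the $c$ ordinates appearing over $a^*$, and let $J$ consist of the $c$ points over $a^*$ together with, for every other abscissa class containing a point whose ordinate lies outside $B^*$, one such representative point. Each of the $c$ vertical points is then the unique element of $J$ with its ordinate, and each representative is the unique element with its abscissa, so $J$ is good, and $|J|=c+(n_A-k)$, where $k$ is the number of abscissa classes all of whose ordinates lie in $B^*$ (the class of $a^*$ among them). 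If $k=1$ this already finishes, since $c\ge s/n_A$ gives $|J|=c+n_A-1\ge (s/n_A)+n_A-1\ge 2\sqrt{s}-1$, again by AM--GM.

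The main obstacle is the case $k\ge2$: the short set $B^*$ ``absorbs'' $k-1$ further abscissa classes, and these $k$ classes live in the thin strip $(\text{their }k\text{ abscissas})\times B^*$, which can be too large to ignore. I would handle it by running the same dichotomy \emph{inside} the strip --- in the regime $cd<s$ the strip is necessarily a proper part of $S(\mathcal{E})$, so an induction on $|\mathcal{E}|$ is available --- and then merging the good set produced there with an abscissa transversal of everything outside the strip, the representatives of that transversal chosen with all ordinates outside $B^*$. The delicate point of the merge is that a point of the inner set certified only by its \emph{ratio} may collide, in ratio, with points of the outer transversal; one must use the freedom in each outside abscissa class to dodge the finitely many dangerous ratios, or else absorb a bounded loss into the AM--GM slack. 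I expect this bookkeeping --- tracking which of the three coordinates certifies each surviving point and checking that all collisions can be absorbed --- to be the heaviest part, the arithmetic heart being merely $n+s/n\ge 2\sqrt{s}$.
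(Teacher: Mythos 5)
Your preparatory material is fine: the remark that two distinct points of $S(\mathcal{E})$ agree in at most one of abscissa, ordinate, ratio; the transversal bounds $\mathcal{T}_*(\mathcal{E})\ge n_A,n_B,n_R$; the cross bound $|C|+|D|-1$; and the comb. Your case $cd\ge s$ and your comb case $k=1$ are correct. The genuine gap is the case $k\ge 2$: what you offer there is a plan, not a proof, and as described it does not close. The merge is only asserted --- you note yourself that inner points certified by ratio may collide with ratios of the outer transversal and propose to ``dodge'' them or ``absorb a bounded loss into the AM--GM slack'', but no argument is given, and the loss is in fact not bounded. Worse, the arithmetic of the plan undershoots in exactly the dangerous regime: the comb fails only when $k$ exceeds the slack $c+n_A-2\sqrt{s}$, i.e.\ essentially when $c\approx n_A\approx\sqrt{s}$, where AM--GM is tight and there is no slack at all. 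There, with $k=\alpha n_A$, the strip carries about $\alpha s$ points, so the inductive bound inside it yields only about $2\sqrt{\alpha s}-1$, while the outside abscissa transversal adds about $(1-\alpha)\sqrt{s}$; the total $\bigl(2\sqrt{\alpha}+1-\alpha\bigr)\sqrt{s}$ is strictly below $2\sqrt{s}$ for every $\alpha<1$, a deficit of order $\sqrt{s}$. So the recursion-plus-transversal merge cannot be repaired by bookkeeping alone; one would need a further input (for instance exploiting rows or ratio classes that leave the strip, or the ratio transversal when the strip is grid-like), and none is supplied.

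For comparison, the paper's own proof never enters your hard case: with $a,b$ the sizes of the two projections it takes a column $V$ with $|V|\ge s/a$ and a row $H$ with $|H|\ge s/b$, verifies exactly your three-way uniqueness for $V\cup H$ (the possible intersection point being the unique one with its ratio), concludes $\mathcal{T}_*(\mathcal{E})\ge s/a+s/b-1$, and then finishes by optimizing this expression subject to $s\le ab$. In other words the paper stops at your cross building block. Note, however, that the regime you got stuck in ($cd<s$, equivalently $ab$ substantially larger than $s$) is precisely the one where $s/a+s/b$ can drop below $2\sqrt{s}$, so the weight of the paper's argument rests entirely on that final optimization step; your instinct that something extra is needed there is worth pursuing, but your proposal does not yet supply it.
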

\begin{proof}
Consider the intersection of $\mathcal{E}$ with any  of three planes $\{ x=1\}, \{y=1\}, \{ z=1\}$, say with $\{ z=1\}$ and
put $s=|S(\mathcal{E})| = |\mathcal{E} \cap \{ z=1\}|$.
    So we can think about $S(\mathcal{E})$ as a subset of $\F_p^* \times \F_p^*$.
    Take minimal sets $A,B\subseteq \F_p^*$ such that $S(\mathcal{E}) \subseteq A\times B$.
    Put $a=|A|$ and $b=|B|$. Then $s\le ab$.
	Clearly, there is $x_*$ such that $|\{ x=x_*\} \cap S(\mathcal{E})| \ge s/a$ and, similarly,
	there exists $y_*$ with  $|\{ y=y_*\} \cap S(\mathcal{E})| \ge s/b$.
    Put $V=\{ x=x_*\} \cap S(\mathcal{E})$ and $H=\{ y=y_*\} \cap S(\mathcal{E})$.
    If $V \cap H = \emptyset$ then each point in $V\cup H$ has either a unique abscissa or a unique ordinate.
    Now suppose that $V \cap H = (x_*, y_*)$. Then it is easy to see that the point $(x_*, y_*)$ has a unique ratio $x_*/y_*$
    differs from ratios of points in $V \cup H$.
    Thus any point in $V\cup H$ has either a unique abscissa or ordinate or its ratio.
    It gives us $\mathcal{T}_* (\mathcal{E}) \ge s/a+s/b-1$.
	Optimizing the expression $s/a+s/b-1$ over $a,b$ subject to $s\le ab$, we get
	$\mathcal{T}_* (\mathcal{E}) \ge 2|\mathcal{E}|^{1/2}-1$
as required.
$\hfill\Box$
\end{proof}

\begin{remark}
    Let $\G$ be a subgroup of $\F_p^*$ and $S(\mathcal{E}) = (\G \times \G \times \G)/ \sim$.
    Then we have $\G/\G=\G$ and hence  $\mathcal{T}_* (\mathcal{E}) \le  3|\G| = 3|\mathcal{E}|^{1/2}$
    (actually one can show that
    $$\mathcal{T}_* (\mathcal{E}) \ge \max_{X,Y \subseteq \G} (|X/Y| + 2 |\G| - |X| - |Y|-1)$$
    and thus a lower bound for $\mathcal{T}_* (\mathcal{E})$ is $(3-o(1))|\G|$. We do not need this fact.)
    It follows that the bound of Lemma \ref{l:TE} is tight up to constants.
\label{r:tight2}
\end{remark}

\bigskip

We say that $S(\mathcal{E})$ forms the Cartesian product if $S(\mathcal{E})$ is equivalent to the Cartesian product
$A\times B \times \{1\}$  or $A\times \{1\} \times B$ or $\{1 \} \times A\times B$ in two--dimensional projective plane.
With some abuse of the notation we write sometimes  $S(\mathcal{E}) = A\times B$ in this case.
Notice that always
\begin{equation}\label{f:trivial_T}
	\mathcal{T}_* (\mathcal{E}) \ge |\{ x/y ~:~ (x,y,1) \in S(\mathcal{E}) \}| \,.
\end{equation}
In particular, if $S(\mathcal{E}) = A\times B$, then  $\mathcal{T}_* (\mathcal{E}) \ge |A/B|$ but it is easy to see that
$\mathcal{T} (\mathcal{E}) = \max \{ |A|, |B| \}$.
Thus the quantities $\mathcal{T} (\mathcal{E}), \mathcal{T}_* (\mathcal{E})$ are incomparable  in general although we have a trivial inequality
$\mathcal{T} (\mathcal{E}) \le \mathcal{T}_* (\mathcal{E})$  of course.

\bigskip

Now we consider several examples of concrete systems of equations (\ref{f:E_form}).

\begin{example}
    Let $\Gamma \subseteq \F^*_p$ be a multiplicative subgroup.
    We are interested in basis properties of $\Gamma$, that is, in a question when $\Gamma + \Gamma$ contains $\F_p^*$.
    If $\Gamma + \Gamma$ does not contain $\F_p^*$, then it is easy to see that for some nonzero $\xi$ one has
    $(\Gamma + \Gamma) \cap \xi \Gamma = \emptyset$.
    It means that, taking any $a,b \in \G$ the equation $ax+by-\xi z = 0$ has no solutions in $\Gamma$.
    Thus $S(\mathcal{E})$ is the Cartesian product in this case.

    Similarly, one can consider a slightly general situation and study sets $A$ with $(A + A) \cap \Gamma = \emptyset$,
    where $A$ is not necessary $\G$--invariant.
    Here the equation $\gamma x+ \gamma y - z =0$, $x,y\in A$, $z\in \Gamma$ has no solutions for any $\gamma \in \Gamma$
    and thus $S(\mathcal{E}) = \{ (\gamma, \gamma, -1) ~:~ \gamma \in \G \}$ for corresponding triple $(A,A,\Gamma)$.
\label{ex:Gamma}
\end{example}


Proposition below shows that one cannot replace the constant $\kappa$ in Theorem \ref{t:main_intr} and $\kappa_1, \kappa_2, \kappa_3$
in Theorem \ref{t:main} below by something greater than $1/2$.

\bigskip

\begin{proposition}
	For any $k\ge 1$ there is a system $\mathcal{E}$ with
	$\mathcal{T}_* (\mathcal{E}) \gg |\mathcal{E}| \ge k$ 	
	such that for all sufficiently large $p\ge p(k)$
	there exists
	$A \subseteq \F_p$ avoiding the family $\mathcal{E}$
	with 	
	\begin{equation}\label{f:low_bounds}
		|A| \gg
    \frac{p}{|\mathcal{E}|^{1/2}}
		\,.
	\end{equation}
\label{p:low_bounds}
\end{proposition}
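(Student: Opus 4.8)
The plan is to exhibit an explicit construction showing the exponent $1/2$ cannot be improved, so I need, for each $k$, a system $\mathcal{E}$ with $|\mathcal{E}|\ge k$, $\mathcal{T}_*(\mathcal{E})\gg|\mathcal{E}|$, together with a large set $A$ avoiding it. The natural candidate is to take $\mathcal{E}$ supported on a Cartesian product: fix a parameter $m$, choose $S(\mathcal{E}) = D\times\{1\}\times\{1\}$ where $D\subseteq\F_p^*$ is a set of $m$ values, so that the equations are $d_j x + y + z = d_j$ (or a similarly normalized shape matching (\ref{cond:main_intr})), $j\in[m]$. Here $|\mathcal{E}|=m$, and by (\ref{f:trivial_T}) and the discussion after Remark \ref{r:tight2}, if $D$ is chosen with $|D/\{1\}|=|D|=m$ then $\mathcal{T}_*(\mathcal{E})\ge m = |\mathcal{E}|$, so the hypothesis $\mathcal{T}_*(\mathcal{E})\gg|\mathcal{E}|$ holds automatically. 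The first step is thus to pin down exactly which normalized family best matches the target equation form and record the trivial lower bound on $\mathcal{T}_*$.

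Next I would produce the avoiding set $A$. The idea is an interval (or Bohr-set) construction: if $A$ is contained in a short arithmetic-progression-like window, then the linear form $d x + y + z - d$ evaluated on $A^3$ ranges over a controlled set, and one tunes the window so this range misses $0$ simultaneously for all $d\in D$. Concretely, one can take $A$ to be an interval $A=\{u : |u-c|\le p/(Cm^{1/2})\}$ for a suitable center $c$ and constant $C$: then for $x,y,z\in A$ the quantity $dx+y+z-d$ stays within a set of diameter $O(p\cdot(|d|+2)/(Cm^{1/2}))$ of the point $dc+2c-d$, and by choosing the $d_j$ to be bounded (e.g. small integers, or elements of a short progression) and the center $c$ appropriately, one forces $dx+y+z-d\ne 0$. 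This gives $|A|\gg p/m^{1/2} = p/|\mathcal{E}|^{1/2}$, which is (\ref{f:low_bounds}). An alternative, cleaner route is the dilation-of-a-subgroup trick from Example \ref{ex:Gamma}: if $\Gamma$ is a multiplicative subgroup with $\Gamma+\Gamma\ne\F_p^*$, pick $\xi\notin\Gamma+\Gamma$ and note $ax+by-\xi z = 0$ has no solutions in $\Gamma$ for all $a,b\in\Gamma$; choosing $|\Gamma|\asymp p/m^{1/2}$ and extracting $m$ equations with distinct ratios from the $|\Gamma|^2$ available ones yields the claim, but one must check such subgroups of the right size exist for infinitely many $p$, which restricts the primes.

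The main obstacle is the simultaneity: a single equation is trivial to avoid with a density-$\Omega(1)$ set, but avoiding $m$ of them at once forces the $p/m^{1/2}$ loss, and one must verify the bookkeeping — that the window can be shrunk by a factor $m^{1/2}$ (not $m$) while still killing all $m$ equations. With the interval construction this comes down to the fact that the ``bad'' translates (values of $c$ for which some equation has a solution) each occupy a proportion $O(1/m^{1/2})$ of a suitable range of centers — more precisely, the set of forbidden $c$ for a fixed $d$ has measure $O(p/m^{1/2})$, and summing over $m$ equations gives total forbidden measure $O(p m^{1/2}/m^{1/2}\cdot\text{something})$; the delicate point is arranging the count so the union of forbidden sets does not cover everything, which is exactly where the $m^{1/2}$ rather than $m$ enters and where one uses that the $d_j$ can be taken of bounded height. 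I would handle this by an averaging/pigeonhole argument over the choice of center, and finish by noting that the resulting $A$ has the stated size and, by construction, avoids every equation of $\mathcal{E}$.
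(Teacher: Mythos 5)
There is a genuine gap, and it sits exactly at the point you yourself flag as ``the delicate point''. In your main construction you take a one--dimensional coefficient family $S(\mathcal{E})=D\times\{1\}\times\{1\}$ with $|D|=m=|\mathcal{E}|$ and an interval $A$ of length $L\sim p/m^{1/2}$. For the diameter argument (equivalently, for lifting the equation to $\Z$ with no wraparound) you need every coefficient $d\in D$ to have height $O(m^{1/2})$: for $x,y,z$ in an interval of length $L$ the value set of $dx+y+z$ is an interval of length $\sim |d|L$, and once $|d|L\gtrsim p$ this covers all residues, so \emph{every} choice of center admits a solution of $dx+y+z=d$. But $m$ distinct integer coefficients cannot all have size $O(m^{1/2})$, so with a one--dimensional family some equations are unavoidable for a set of size $p/m^{1/2}$ of interval type. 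The averaging/pigeonhole over centers that you propose as a rescue also fails quantitatively: even for bounded $d$ each equation forbids a set of centers of measure $\gg L\sim p/m^{1/2}$, so the union over $m$ equations has measure $\gg p\,m^{1/2}\gg p$, and no congruence obstruction is supplied to replace the union bound. Your fallback via Example \ref{ex:Gamma} does not work either: for fixed $m$ and $p\to\infty$ a multiplicative subgroup of size $\asymp p/m^{1/2}$ has size $\gg p^{3/4}$, and by Weil's bound such a subgroup satisfies $\Gamma+\Gamma\supseteq\F_p^*$, so no $\xi$ with $(\Gamma+\Gamma)\cap\xi\Gamma=\emptyset$ exists (besides the divisibility restriction on $p$ that you note).

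The paper's proof resolves precisely this tension by making the coefficient set \emph{two--dimensional}: with $q\asymp|\mathcal{E}|^{1/2}$ it takes the $\asymp q^2$ equations $ix+jy=z$ with $i,j\le q/2$ both even, and $A=\{1\le x<p/q:\ x\equiv 1 \pmod 2\}$, so $|A|\gg p/q\gg p/|\mathcal{E}|^{1/2}$. Because each coefficient has height only $\asymp|\mathcal{E}|^{1/2}$, for $x,y,z\in A$ one has $|ix+jy-z|<p$, hence a solution modulo $p$ would be a genuine solution in $\Z$, which is impossible by parity (even left--hand side, odd $z$); no averaging over translates is needed. The condition $\mathcal{T}_*(\mathcal{E})\gg|\mathcal{E}|$, which is automatic in your one--dimensional family, is no longer automatic for a Cartesian product and is recovered in the paper from (\ref{f:trivial_T}) by restricting to pairs of square--free numbers, which produce $\gg q^2$ distinct ratios. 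So the missing idea in your write--up is the combination of a $q\times q$ grid of small even coefficient pairs with a parity obstruction (plus the square--free trick for $\mathcal{T}_*$), in place of a union bound over centers, which cannot be made to work.
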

\begin{proof}
	Let
    $2 \le q < \sqrt{p}$
    be an even  parameter and
$$
	A := \{ 1 \le x < p/q ~:~ x \equiv 1 \pmod 2 \} \,.
$$
	We have $|A| \gg p/q$. 	
	Put $q'=q/2$ and $S = \{ (i,j) \in [q'] ~:~ i,j \equiv 0 \pmod 2 \}$.
	Clearly, $q^2 \ge |S| \gg q^2$, so taking $p$ and hence $q$ sufficiently large, we get $|S| \ge k$.
	Finally,  let $\mathcal{E}$ be the Cartesian product with $\{ z=1\}$ and let $S(\mathcal{E}) = -S$,
    so $\mathcal{E} \sim (-S) \times \{ 1\}$.
    Also let us
    put $d_j = 0$ in (\ref{f:E_form}).
    Once again,
    we have proved that $|\mathcal{E}| \ge k$ already.

	First of all, let us prove that $\mathcal{T}_* (\mathcal{E}) \gtrsim |\mathcal{E}| \gg q^2$.
    Consider the
    square--free numbers $F_0$ from $[q']$, notice that $|F_0| \gg q' \gg q$
	and easily check the identity $|F_0/F_0| = |F_0|^2 \gg q^2$.
	After that
    we take
    the set $S(\mathcal{E'}) := \{ (i,j) \in F_0^2 ~:~ (i,j) \in S\} \subseteq S(\mathcal{E})$
    and we get by (\ref{f:trivial_T}) and the fact $|F_0/F_0| = |F_0|^2$  that
    $\mathcal{T}_* (\mathcal{E}) \ge \mathcal{T}_* (\mathcal{E'}) \gg q^2$.

	Secondly, we need to check that $A$ avoiding the family $\mathcal{E}$.
	If not, then there are $x,y,z\in A$ such that $ix+jy \equiv z\pmod p$.
	We have $|ix+jy-z| < 2q'\cdot p/q = p$ and hence $ix+jy=z$.
	Thus, from $i,j \equiv 0 \pmod 2$, it follows that $z=ix + jy \equiv 0 \pmod 2$ but by the definition of the set $A$, we know that $z\equiv 1 \pmod 2$ which is a contradiction.
This completes the proof.
$\hfill\Box$
\end{proof}

\bigskip

\begin{remark}
    In \cite{Yekhanin}, \cite{AB} it was constructed a multiplicative subgroup $\G \subseteq \F_q \setminus \{ 0 \}$ with $|\G| \gg q^{2/3}$
    and having no solutions of the equation $x+y+z=0$.
    In view of Example \ref{ex:Gamma} it gives another
    system $\mathcal{E}$ such that (\ref{f:low_bounds}) holds in a general field $\F_q$.
    Here $\mathcal{T} (\mathcal{E}) \sim \mathcal{T}_* (\mathcal{E}) \sim |\G| = |\mathcal{E}|^{1/2}$, see Remarks \ref{r:tight1}, \ref{r:tight2}.

    If we consider the case of prime $p$, then
    a multiplicative subgroup $\G \subseteq \F_p^*$ without solutions of the equation $x+y=z$ was constructed with the constraint $|\G| \gg p^{1/3}$ only,    see \cite{AB}.
    Conjecturally, the right bound here is $|\G| \ge p^{1/2-o(1)}$ or even $p^{2/3-o(1)}$, see \cite[Section 4]{AB}.
\end{remark}


    We finish this section considering another
    family $\mathcal{E}$.

\begin{example}
    Take
    the family of equations
    \begin{equation}\label{f:Sar}
        x+y+\lambda z = 0 \,,
    \end{equation}
    where $\lambda \in \Lambda$ and $\Lambda \subseteq \F_p^*$ is a set.
    We have $t:= |\Lambda| = \mathcal{T} (\mathcal{E}) = \mathcal{T}_* (\mathcal{E}) = |\mathcal{E}|$ for this  family.
    By the main result from \cite{Sarkozy_eq} the equation
    $$
        a+b=cd \,, \quad \quad a\in A,\, b\in B,\, c\in C,\, d\in D
    $$
            has a solution if $|A||B||C||D| \gg p^3$.
    In other words, if $A$ avoids all equations (\ref{f:Sar}), then $|A| \ll p / t^{1/3}$.
    The same
    bound holds in the case of the Cartesian product (to see this just fix a variable, say, $b$ in the correspondent equation $a+bc+de=0$.)
    An improvement of $1/3$ would imply  a considerable progress in the area (in particular, for basis properties of multiplicative subgroups) and seems unattainable at the moment.
\label{ex:Sarkozy}
\end{example}

    Further examples of families $\mathcal{E}$ can be found in the last section \ref{sec:applications}.

\section{Preliminaries}
\label{sec:preliminaries}

Let us begin with a simple lemma about the triangle inequality for restricted energies.

\begin{lemma}
	For any four sets $A,B,X,Y \subseteq \Gr$ one has
	$$
		\E_A (X \sqcup Y,B)^{1/2} \le \E_A (X,B)^{1/2} + \E_A (Y,B)^{1/2} \,.
	$$
\label{l:norm_E+}
\end{lemma}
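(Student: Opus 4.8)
The statement is a triangle inequality for the "square-root of restricted energy" regarded as a norm in the $X$ variable, with $X$ and $Y$ being disjoint. My plan is to reinterpret $\E_A(X,B)$ as a genuine $\ell^2$-norm (squared) of a suitable function, so that the claimed inequality becomes an instance of the ordinary Minkowski inequality in $\ell^2$. Concretely, recall from the paper that for $P\sbeq \Gr$ we have $\E^+_P(A) = \sum_{x\in P} (A\c A)^2(x)$; the object $\E_A(X,B)$ should be read analogously as $\sum_{x\in A} (X\c B)^2(x)$ (the restriction being to the set $A$ in the "output" variable, while $X$ and $B$ are the two sets whose correlation we take). Writing $F_X(x) := (X\c B)(x) = \sum_{y} X(y) B(y+x)$, we have $\E_A(X,B) = \sum_{x\in A} F_X(x)^2 = \| F_X \|_{\ell^2(A)}^2$, and likewise for $Y$ and for $X\sqcup Y$.

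The key step is the observation that convolution is linear in its first argument and that $X$, $Y$ disjoint means $X\sqcup Y = X+Y$ as functions (characteristic functions add without overlap). Hence
$$
    F_{X\sqcup Y}(x) = ((X\sqcup Y)\c B)(x) = (X\c B)(x) + (Y\c B)(x) = F_X(x) + F_Y(x)
$$
pointwise. Therefore
$$
    \E_A(X\sqcup Y, B)^{1/2} = \| F_X + F_Y \|_{\ell^2(A)} \le \| F_X \|_{\ell^2(A)} + \| F_Y \|_{\ell^2(A)} = \E_A(X,B)^{1/2} + \E_A(Y,B)^{1/2} \,,
$$
which is exactly the assertion; the middle inequality is the triangle inequality for the $\ell^2$-norm on the index set $A$. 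The same argument works verbatim for the multiplicative energy, replacing $\c$ by $\otimes$, since that operation is also bilinear.

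There is essentially no obstacle here: the only thing to be careful about is the bookkeeping of which variable the restriction $A$ refers to and which pair of sets enters the correlation, so that the additivity $F_{X\sqcup Y} = F_X + F_Y$ genuinely holds — this is where disjointness of $X$ and $Y$ is used, and it is the one hypothesis that must not be dropped. Once the energy is written as $\|\cdot\|_{\ell^2(A)}^2$ of a function that is linear in the set $X$, the lemma is immediate from Minkowski.
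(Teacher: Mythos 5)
Your proposal is correct and is essentially the paper's argument: the paper expands $\E_A(X\sqcup Y,B)=\sum_{a\in A}\big((X\c B)(a)+(Y\c B)(a)\big)^2$ and bounds the cross term by Cauchy--Schwarz, which is precisely the proof of the $\ell^2(A)$ triangle inequality that you invoke as Minkowski, with the same use of disjointness to get $(X\sqcup Y)\c B=X\c B+Y\c B$ pointwise. No substantive difference.
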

\begin{proof}
	We have
$$
	\E_A (X \sqcup Y,B) = \sum_{a\in A} ((X \c B)(a) + (Y\c B) (a))^2 =
$$
$$
	= \E_A (X,B) + \E_A (Y,B) + 2\sum_{a\in A} (X\c B) (a) (Y\c B) (a) \,.
$$
	By the Cauchy--Schwarz inequality, we get
$$
	\E_A (X+Y,B) \le \E_A (X,B) + \E_A (Y,B) + 2 \E^{1/2}_A (X,B) \E^{1/2}_A (Y,B) = (\E^{1/2}_A (X,B) + \E^{1/2}_A (Y,B))^2
$$
    as required.
$\hfill\Box$
\end{proof}

\bigskip

Now we need in some sum--product results.
In \cite{AMRS} it was shown that the main
theorem
from \cite{misha} implies the following weaker version of the Szemer\'edi--Trotter
Theorem
for a general field $\F$.

\begin{theorem}\label{t:SzT_Fp}
Let $A, B\subset \F$  be two sets, with $|B|\le |A|\leq p^{2/3}$ in positive characteristic.
The number of incidences between the point set $A\times B$ and  any set of $m$ lines in $\F^2$ is
$$O((|A||B|)^{3/4} m^{2/3}+m + |A||B|) \,.$$
\end{theorem}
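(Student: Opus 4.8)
The plan is to deduce the statement from Rudnev's point--plane incidence bound (the main result of \cite{misha}), through the, by now standard, passage via collinear triples. First I would discard the degenerate part of the configuration. The vertical lines create incidences only through the at most $\min(m,|A|)$ of them meeting $A\times B$, each in at most $|B|$ points; the horizontal lines are symmetric; and the lines meeting $A\times B$ in at most one point carry at most $m$ incidences altogether. All of this is absorbed into $O(|A||B|)+O(m)$, so it remains to bound $I(A\times B,L')$ for a family $L'$ of at most $m$ lines, each of finite nonzero slope.

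Put $r(\ell):=|\ell\cap(A\times B)|$. By H\"older's inequality,
$$
    I(A\times B,L')=\sum_{\ell\in L'}r(\ell)\ \le\ \Big(\sum_{\ell\in L'}r(\ell)^{3}\Big)^{1/3}|L'|^{2/3}\,.
$$
Two distinct lines of $L'$ share at most one point, and every ordered triple of points of $A\times B$ that is not constant and lies on a line of $L'$ lies on a \emph{unique} line of finite nonzero slope; separating these from the constant triples (which account for $\sum_{\ell\in L'}r(\ell)=I(A\times B,L')$) one gets $\sum_{\ell\in L'}r(\ell)^{3}\le \mathcal T+I(A\times B,L')$ and $I(A\times B,L')\le \mathcal T+m$, hence $\sum_{\ell\in L'}r(\ell)^{3}\le 2\mathcal T+m$ and $I(A\times B,L')\ll \mathcal T^{1/3}m^{2/3}+m$, where $\mathcal T$ is the number of non-constant ordered triples of points of $A\times B$ lying on a common line of finite nonzero slope. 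Thus everything reduces to estimating $\mathcal T$.

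The core estimate is $\mathcal T\ll(|A||B|)^{9/4}$ (up to lower order terms), and this is where the only genuine input, the bound of \cite{misha}, is used. One encodes a collinear triple of finite nonzero slope $\lambda$ by the relations $b_i-\lambda a_i=b_j-\lambda a_j$ ($a_i\in A$, $b_i\in B$); fixing one grid point $(a_3,b_3)$ turns these into $b_i=b_3+\lambda(a_i-a_3)$, which is an incidence between the point set $(A\times B)\times\{1\}\subset\F^{3}$ and a suitable family of planes, after first disposing of the wholly degenerate (all-equal) triples, which otherwise contribute a spurious factor of $p$. Two facts make Rudnev's theorem applicable with a non-trivial conclusion: the Cartesian grid $(A\times B)\times\{1\}$ has at most $\max(|A|,|B|)$ collinear points, controlling the ``number of collinear points'' parameter; and $|A\times B|=|A||B|\le p^{4/3}=o(p^{2})$, which is exactly where the hypothesis $|A|\le p^{2/3}$ (together with $|B|\le|A|$) is used to remain in the admissible range of \cite{misha}. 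The output $\mathcal T\ll(|A||B|)^{9/4}$ is precisely what makes $\mathcal T^{1/3}m^{2/3}=(|A||B|)^{3/4}m^{2/3}$, and restoring the degenerate contribution yields the claimed $O\big((|A||B|)^{3/4}m^{2/3}+m+|A||B|\big)$.

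The step I expect to be the main obstacle is this last collinear-triple estimate, more precisely choosing the point/plane configuration so that Rudnev's bound is actually effective. Applied directly to $I(A\times B,L)$ --- points $(A\times B)\times\{1\}$, planes dual to the $m$ lines --- the bound is too weak: its ``collinear points'' term, coming from the $|A|$ collinear points on a horizontal line of the grid, already costs $\sim|A|m$, which can exceed the target. Passing through $\mathcal T$ is what fixes this, since there the Cartesian structure is exploited in both factors and only the mild collinearity bound $\max(|A|,|B|)$ is needed; one must still take care to remove the all-equal triples before invoking \cite{misha}.
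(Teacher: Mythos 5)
The paper itself does not prove this statement: it imports it from \cite{AMRS} as a consequence of Rudnev's point--plane theorem \cite{misha}, and your overall strategy (discard vertical/horizontal/poor lines, H\"older to reduce $I$ to the number $\mathcal{T}$ of collinear triples of the grid spanned by lines of finite nonzero slope, then bound $\mathcal{T}$ by \cite{misha}) is exactly the route of that source. The combinatorial reduction you give is fine: the degenerate lines contribute $O(|A||B|+m)$, and the counting $\sum_{\ell}r(\ell)^3\le 2\mathcal{T}+m$, $I\ll \mathcal{T}^{1/3}m^{2/3}+m$ is correct because a non-constant triple determines its spanning line.

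The genuine gap is the core step, which you assert rather than prove. First, the encoding you describe --- incidences between the point set $(A\times B)\times\{1\}$ and planes obtained from $b_i=b_3+\lambda(a_i-a_3)$ --- cannot by itself yield $\mathcal{T}$: a triple count is a second--moment (energy) quantity, not a plain incidence count, so one must first rewrite $\mathcal{T}$ as the number of solutions of $(b_2-b_1)(a_3-a_1)=(b_3-b_1)(a_2-a_1)$ with $a_i\in A$, $b_i\in B$ (equivalently $\sum_{a,b}\E^{\times}(A-a,B-b)$ up to degenerate terms) and then realize \emph{that} equation as a point--plane incidence problem in which both the points and the planes are indexed by pairs of the variables; the ``maximal number of collinear points'' hypothesis of \cite{misha} must then be verified for that derived configuration, not for the grid $(A\times B)\times\{1\}$, so your remark that the grid has at most $\max(|A|,|B|)$ collinear points is checking the wrong object. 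Second, the headline bound $\mathcal{T}\ll(|A||B|)^{9/4}$ is not what the Rudnev machinery gives directly: its natural output is $(|A||B|)^{3}/p+(|A||B|)^{3/2}\max(|A|,|B|)$, and while the first term is $\le(|A||B|)^{9/4}$ precisely because $|A||B|\le p^{4/3}$ (this is where $|B|\le|A|\le p^{2/3}$ enters, as you noted), the second term exceeds $(|A||B|)^{9/4}$ in the unbalanced regime $|A|>|B|^{3}$; there one needs a separate (easy) argument, e.g.\ that a line of finite nonzero slope meets $A\times B$ in at most $\min(|A|,|B|)$ points, giving $\mathcal{T}\ll|A|^{2}|B|^{3}\le(|A||B|)^{9/4}$. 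Without the correct point--plane formulation and this case analysis, the proposal does not yet constitute a proof; with them it becomes the argument of \cite{AMRS}.
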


In \cite{AMRS} (see also \cite{RRS}) it was obtained another particular sum--product result which we will use in the next sections.
For more general context
consult with paper
\cite{AMRS}.

\begin{theorem}
    Suppose that $A,B,C\subseteq \F_p$ are sets with $|A||B||C| = O(p^2)$.
    Then
$$
    |\{(a_1,a_2,b_1,b_2,c_1,c_2)\in A^2\times B^2\times C^2\,:\,a_1(b_1+c_1)=a_2(b_2+c_2) \}|
    \ll
$$
$$
    \ll (|A| |B| |C|)^{\frac32}+ |A| |B| |C| \max \{ |A|, |B|, |C| \} \,.
$$
\label{t:RRS}
\end{theorem}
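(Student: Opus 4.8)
\textbf{Proof plan for Theorem \ref{t:RRS}.}
The plan is to reduce the six-variable counting problem to an incidence problem between points and lines over $\F_p$ and then invoke the Szemer\'edi--Trotter type bound of Theorem \ref{t:SzT_Fp} (which itself comes from the main result of \cite{misha}). First I would rewrite the equation $a_1(b_1+c_1)=a_2(b_2+c_2)$ by introducing the ``slope'' variable: for a fixed nonzero value $\lambda = a_1/a_2$ the equation becomes $b_1+c_1 = \lambda(b_2+c_2)$. So the quantity in question is essentially $\sum_{\lambda} r_{A/A}(\lambda) \cdot N(\lambda)$, where $r_{A/A}(\lambda)$ is the number of representations of $\lambda$ as a ratio of two elements of $A$, and $N(\lambda) = |\{(b_1,b_2,c_1,c_2)\in B^2\times C^2 : b_1 - \lambda b_2 = \lambda c_2 - c_1\}|$. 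The term $a_1 = a_2 = 0$ contributes nothing since $0\notin$ the ratio set in the relevant sense, or can be absorbed into the lower-order $|A||B||C|\max\{\dots\}$ term; I would handle the degenerate cases ($b_1+c_1 = 0 = b_2+c_2$, etc.) separately as they give at most the claimed error term.

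Next I would set this up as an incidence count. For each triple $(a_1,a_2)$ with $a_1/a_2 = \lambda$ and each pair $(b_2,c_2)$, the pair $(b_1,c_1)$ must lie on the line $b_1 + c_1 = \lambda(b_2+c_2)$ in the plane $B\times C$; dually, fixing $(b_1,c_1,b_2,c_2)$ one gets a linear constraint on $\lambda$ hence on $a_1/a_2$. The cleanest route is probably to count incidences between the point set $(A\times A)$ (or a Cartesian product built from $B$ and $C$) and a family of lines indexed by the remaining variables, arranging sizes so that the hypothesis $|A||B||C| = O(p^2)$ translates into the ``$|A||B|\le p^{2/3}$-type'' constraint needed to apply Theorem \ref{t:SzT_Fp}. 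One should dyadically decompose according to the popularity of $\lambda$ in $A/A$ (i.e., restrict to $\lambda$ with $r_{A/A}(\lambda) \sim \Delta$), bound the number of such $\lambda$ by $|A|^2/\Delta$, apply the incidence bound on each level, and sum over the $O(\log p)$ dyadic levels; the logarithmic loss is harmless against the power-saving main term (and in any case the statement is stated with $\ll$, not $\lesssim$, so one must be slightly careful, but a geometric-series argument over the levels where $\Delta$ ranges removes the log).

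The main obstacle, as usual with these sum-product estimates, is getting the point and line sets to be genuine Cartesian products of the right sizes so that Theorem \ref{t:SzT_Fp} applies with the correct exponent $3/4$ on the point count and $2/3$ on the line count, while keeping the ``degenerate'' contributions (collinear/concurrent configurations, zero denominators, lines counted with multiplicity) under the threshold $|A||B||C|\max\{|A|,|B|,|C|\}$. Concretely, after the reduction the incidence problem is between roughly $|A|^2/\Delta$ ``slopes'' and a point set of size $\sim |B||C|$ (with each slope-line carrying multiplicity $\sim \Delta |B||C|/(\text{something})$), and Theorem \ref{t:SzT_Fp} gives a bound of shape $(|B||C|)^{3/4}(|A|^2/\Delta)^{2/3} + \dots$; multiplying back by the multiplicities and summing over $\Delta$, the dominant term should collapse to $(|A||B||C|)^{3/2}$, with the trivial/off-diagonal pieces giving $|A||B||C|\max\{|A|,|B|,|C|\}$. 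Verifying that the constraint $|A||B||C| = O(p^2)$ is exactly what is needed to stay in the range $|A||B|\le p^{2/3}$ (after the right normalization) is the delicate bookkeeping step; once that is pinned down the rest is routine Cauchy--Schwarz and summation of a geometric series.
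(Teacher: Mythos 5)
There is a genuine gap, and it is quantitative, not just a matter of bookkeeping. First, note that the paper does not prove Theorem \ref{t:RRS} at all: it is imported from \cite{AMRS} (see also \cite{RRS}), where the proof is a \emph{direct} application of Rudnev's point--plane incidence theorem in $\F_p^3$. The equation $a_1b_1+a_1c_1-(b_2+c_2)a_2=0$ is linear in $(b_1,c_1,a_2)$ for fixed $(a_1,b_2,c_2)$, so one counts incidences between the $|A||B||C|$ points of $B\times C\times A$ and the $|A||B||C|$ planes indexed by $A\times B\times C$; the term $(|A||B||C|)^{3/2}$ is Rudnev's main term, the term $|A||B||C|\max\{|A|,|B|,|C|\}$ comes from the maximal number of collinear points in the Cartesian product, and the hypothesis $|A||B||C|=O(p^2)$ is exactly Rudnev's condition. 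Your plan instead routes everything through the point--line bound of Theorem \ref{t:SzT_Fp}, which is itself a \emph{lossy corollary} of the point--plane theorem (it is obtained from an $L^2$ collision count by Cauchy--Schwarz), and going backwards from it to the $L^2$ statement of Theorem \ref{t:RRS} cannot recover the exponent $3/2$. Concretely, in the model case $|A|=|B|=|C|=n$: writing $E=\sum_s d(s)^2$ with $d(s)=|\{(a,b,c):a(b+c)=s\}|$, the rich values $S_\Delta=\{s:d(s)\ge\Delta\}$ are controlled by Theorem \ref{t:SzT_Fp} (points $A^{-1}\times B$, lines $y=sx-c$) only up to $|S_\Delta|\ll n^{13/2}\Delta^{-3}$, and summing dyadically against the trivial bound $\Delta^2|S_\Delta|\le\Delta n^3$ gives $E\ll n^{19/4}$, i.e.\ $(|A||B||C|)^{19/12}$, strictly weaker than the claimed $(|A||B||C|)^{3/2}=n^{9/2}$. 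Your own decomposition in the popularity of $\lambda\in A/A$ fares no better (I get an exponent around $29/6$ in $n$), so the ``routine summation'' at the end of your plan does not close.

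There are also structural problems with the incidence set-up you describe. The lines $b_1+c_1=\lambda(b_2+c_2)$ in the $(b_1,c_1)$-plane all have slope $-1$, so they form a parallel family distinguished only by the intercept; likewise the lines $y=\lambda x$ over $\lambda\in A/A$ form a pencil through the origin. For such degenerate families the incidence bound carries no information beyond a trivial count, so you must mix variables (e.g.\ points $(b_2,b_1)$, slope $\lambda$, intercept $\lambda c_2-c_1$), and then the lines occur with multiplicities built from $r_{A/A}$ and $r_{\lambda C-C}$. Handling those weights is not a side issue but the heart of the problem, and any Cauchy--Schwarz on them reintroduces the energies $\E^\times(A)$, $\E^+(C)$ that one has no control over. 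In short: to prove Theorem \ref{t:RRS} you should work directly with the point--plane theorem of \cite{misha} (or equivalently with the collision framework of \cite{AMRS}), not with its two-dimensional corollary Theorem \ref{t:SzT_Fp}.
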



Let us obtain  a  consequence of Theorem \ref{t:SzT_Fp} in the spirit of paper \cite{KS2}.

\begin{lemma}
	Let $A \subset \F_p$ be a set such that $A\subseteq \Sym^{+}_t (P,Q)$, where $P,Q\subseteq \F_p$ are two another sets
	with $|P|^2 |Q|^{5/4} \ge t^{2} |A|^{3/4}$ and $|Q| < p^{2/3}$.
	Then for any set $B$, $|B| < p^{2/3}$ one has
	\begin{equation}\label{f:D_Fp_pre}
		|\{ s ~:~ |A\cap Bs| \ge \tau \}| \ll \frac{|P|^2 |Q|^{9/4} |B|^{9/4}}{t^3 \tau^3} \,.
	\end{equation}
\label{l:D_Fp_pre}
\end{lemma}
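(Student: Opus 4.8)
The plan is to relate the quantity $|\{ s ~:~ |A\cap Bs| \ge \tau \}|$ to an incidence count and then apply Theorem \ref{t:SzT_Fp}. First I would unfold the hypothesis $A \subseteq \Sym^+_t(P,Q)$: every $a \in A$ admits at least $t$ representations $a = u - v$ with $u \in P$, $v \in Q$ (or $a = u + v$, depending on the sign convention; I will take whichever matches the definition of $\Sym^+_t$). So if $S := \{ s ~:~ |A \cap Bs| \ge \tau \}$, then for each $s \in S$ there are at least $\tau$ elements $b \in B$ with $bs \in A$, and each such $bs$ has at least $t$ representations from $P,Q$. Summing, the number of quadruples $(s,b,u,v) \in S \times B \times P \times Q$ with $bs = u - v$ is at least $|S| \tau t$.

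Next I would interpret this as incidences. Rewrite $bs = u-v$ as $v = u - bs$, i.e. the point $(u,v) \in P \times Q$ lies on the line $y = x - (bs)$ — but it is cleaner to fix the pair $(b,s)$ and think of the line $\ell_{b,s}: \{(x,y) : y = x - bs\}$, or better to set up the incidence between the point set $P \times Q$ and the family of lines indexed by the values $bs$. The subtlety is that distinct pairs $(b,s)$ may give the same product $bs$, so I should first bound the number of distinct lines by $|B||S|$ and separately control the multiplicity. Actually the slicker route, in the spirit of \cite{KS2}, is: the number of solutions to $bs = u - v$ with $b \in B$, $s \in S$, $u \in P$, $v \in Q$ equals the number of incidences $I(P \times Q, \mathcal{L})$ where $\mathcal{L}$ is the set of lines $\{(x,y): x - y = w\}$ weighted by $r_{BS}(w) = |\{(b,s) \in B \times S : bs = w\}|$ — but to avoid weights I would instead dualize: count incidences between the point set $B \times S$ (or a rescaled version) and lines determined by pairs $(u,v)$. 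Concretely, $bs = u-v$ means the point $(b,s)$ lies on the curve $bs = u - v$; fixing $(u,v)$ this is a hyperbola, not a line, so I must linearize by a change of variables (e.g. take logarithms via a multiplicative-to-additive dictionary, or apply Theorem \ref{t:SzT_Fp} in the multiplicative form).

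The cleanest implementation: for each $s \in S$ and each $b \in B$ with $bs \in A$, and each representation $bs = u - v$, we get $u = v + bs$. Fixing $v \in Q$, the map $(b,s) \mapsto (bs, v + bs)$ shows that the number of such quadruples is at most $\sum_{v \in Q} I(L_v, \mathcal{H})$ where $L_v$ is a line and $\mathcal{H}$ is the hyperbola family $xy = $ const — this is still not Szemer\'edi–Trotter-ready. So I expect the actual argument fixes $s$, considers the point set $\{(b, bs) : b \in B\} \subseteq B \times Bs$, uses $bs \in A$ to land in $A$, writes $A \subseteq \Sym^+_t(P,Q)$, and obtains incidences between the Cartesian-like product and a line set of size $|P||Q|/t$-ish. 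I would then apply Theorem \ref{t:SzT_Fp} with point set $B \times (\text{something of size} \sim |P||Q|)$ against $\sim |B||S|$ lines, solve for $|S|$, and the term $(|A||B|)^{3/4} m^{2/3}$ will dominate under the stated hypotheses, yielding the bound $|S| \ll |P|^2 |Q|^{9/4} |B|^{9/4} / (t^3 \tau^3)$. The role of the numerical hypothesis $|P|^2 |Q|^{5/4} \ge t^2 |A|^{3/4}$ is precisely to ensure the main Szemer\'edi–Trotter term beats the linear terms $m$ and $|A||B|$, and the conditions $|Q|, |B| < p^{2/3}$ are needed to invoke Theorem \ref{t:SzT_Fp} in positive characteristic.

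The main obstacle I anticipate is bookkeeping the linearization correctly — Theorem \ref{t:SzT_Fp} is stated for Cartesian product point sets against arbitrary lines, so I must arrange the incidence so that one side is genuinely a Cartesian product $A' \times B'$ with both factors of controlled size and below $p^{2/3}$, and identify $m$ with the right quantity (roughly $|B| \cdot |S|$) so that optimizing over the three terms in Theorem \ref{t:SzT_Fp} produces exactly the exponents $2,9/4,9/4$ on $t,\tau$ and the right powers of $|P|,|Q|,|B|$. Getting the multiplicities right (that a pair $(b,s)$ is not overcounted, and that the $t$-fold representation is used with the correct multiplicity) is the delicate part; everything else is a routine application of Cauchy–Schwarz and the incidence bound.
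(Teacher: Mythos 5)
Your opening reduction is the same as the paper's: $t\,\tau\,|S_\tau|$ is at most the number of solutions of $p+q=sb$ with $p\in P$, $q\in Q$, $b\in B$, $s\in S_\tau$, and the plan is to bound this by Theorem \ref{t:SzT_Fp}. But the decisive step — turning that count into an unweighted incidence problem between a genuine Cartesian product and a set of lines — is exactly what you do not supply, and the configurations you float do not work. The correct (and quite simple) linearization is to read $p+q=sb$ as: the point $(b,q)$ of the Cartesian product $\mathcal{P}=B\times Q$ lies on the line $l_{p,s}=\{(x,y):\,p+y=sx\}$, i.e. the line with slope $s\in S_\tau$ and intercept $-p$, $p\in P$. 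Distinct pairs $(p,s)$ give distinct lines, so $\mathcal{L}$ has exactly $|P||S_\tau|$ elements, there are no weights and no multiplicity issue (the worry about pairs $(b,s)$ with equal products $bs$ is a red herring in this setup), and the hypotheses $|B|,|Q|<p^{2/3}$ are precisely what Theorem \ref{t:SzT_Fp} needs for the point set $B\times Q$. Your candidate setups — points $P\times Q$ against lines indexed by the values $bs$ (weighted), points $B\times S$ on hyperbolas, a logarithmic change of variables, or, in your final guess, a point set of size $\sim|B||P||Q|$ against $\sim|B||S|$ lines — are either nonlinear, weighted, or have the point/line roles split incorrectly, and with your guessed parameters the main term $(|\mathcal{P}|)^{3/4}|\mathcal{L}|^{2/3}$ would not produce the exponents $|P|^2|Q|^{9/4}|B|^{9/4}t^{-3}\tau^{-3}$.

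A second, smaller gap is in your claim that the numerical hypothesis "precisely ensures the main term beats $m$ and $|A||B|$" and that the rest is routine. In the paper this is not automatic from the hypothesis alone: one first disposes of the trivial range by noting $t\tau|S_\tau|\le|P||Q||B|$, so one may assume $t^2\tau^2\gg|P||Q|^{5/4}|B|^{5/4}$, which together with $\tau\le|B|$, $t\le|Q|$ yields $|P|\ll|Q|^{3/4}|B|^{3/4}$. Then, assuming the desired bound fails, the case where $|\mathcal{P}|=|Q||B|$ dominates is ruled out using $\tau\le\min\{|A|,|B|\}$, $t\le\min\{|P|,|Q|\}$, and it is only here that $|P|^2|Q|^{5/4}\ge t^2|A|^{3/4}$ is invoked; the case where $|\mathcal{L}|=|S_\tau||P|$ dominates is ruled out by combining $t\tau\ll|P|$ with the trivial-range inequality and $|P|\ll|Q|^{3/4}|B|^{3/4}$. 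So the lower-order-term analysis requires these auxiliary reductions, not just the stated hypothesis; as written, your proposal leaves both the key incidence configuration and this case analysis unestablished.
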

\begin{proof}
	Let $S_\tau$ be the set in the left--hand side  of (\ref{f:D_Fp_pre}).
	    We have
\begin{equation*}\label{tmp:02.12.2015_1}
    \tau |S_\tau| \le \sum_{s\in S_\tau} |A\cap Bs| = |\{ a= bs ~:~ a\in A,\, b\in B,\, s\in S_\tau \}|
        := \sigma \,.
\end{equation*}
	Because $A\subseteq \Sym^{+}_t (P,Q)$, we obtain the following upper bound for the number of solutions $\sigma$
\begin{equation}\label{tmp:03.12.2015_1-}
    \sigma \le t^{-1} |\{ p+q = sb ~:~ p\in P,\, q\in Q,\,  b\in B,\, s\in S_\tau \}| \,.
\end{equation}
	First of all, let us prove a trivial estimate for the size of $S_\tau$.
Namely, dropping the condition $s\in S_\tau$ in (\ref{tmp:03.12.2015_1-}),
we get
$$
    \tau |S_\tau| t \le |P| |Q| |B|
$$
and hence inequality (\ref{f:D_Fp_pre}) should be checked in the range
\begin{equation}\label{f:t_tau_range}
    t^2 \tau^2 \gg |P| |Q|^{5/4} |B|^{5/4}
\end{equation}
only because otherwise
$$
    |S_\tau| \le \frac{|P||Q||B|}{t\tau} \ll \frac{|P|^2 |Q|^{9/4} |B|^{9/4}}{t^3 \tau^3} \,.
$$
Let us notice one consequence of (\ref{f:t_tau_range}).
	Using inequality (\ref{f:t_tau_range}) again as well as trivial bounds
$\tau \le |B|$ and $t\le |Q|$, we have
\begin{equation}\label{f:t_tau_range++}
	|P|\ll |Q|^{3/4} |B|^{3/4} \,.
\end{equation}

Further consider the family $\mathcal{L}$ of $|P||S_\tau|$ lines $l_{p,s} = \{ (x,y) ~:~  p+y = sx \}$, $p\in P$, $s\in S_\tau$
and the family of points $\mathcal{P} = B \times Q$.
By our assumptions, we have $|B|, |Q| < p^{2/3}$.
Applying Theorem \ref{t:SzT_Fp} to
the pair $(\mathcal{P}, \mathcal{L})$,
we get
\begin{equation}\label{tmp:03.12.2015_1}
    \sigma \le t^{-1} \mathcal{I} (\mathcal{P}, \mathcal{L})
        \ll
            t^{-1} \left( (|\mathcal{P}|^{3/4} |\mathcal{L}|^{2/3} + |\mathcal{P}| + |\mathcal{L}| \right) \,.
\end{equation}
If the first term in (\ref{tmp:03.12.2015_1}) dominates, then we obtain (\ref{f:D_Fp_pre}).
Now suppose that required bound (\ref{f:D_Fp_pre}) does not hold.
Then if the second term in (\ref{tmp:03.12.2015_1})
is the largest one,
we obtain
$$
    \frac{|P|^2 |Q|^{9/4} |B|^{9/4}}{t^2 \tau^2} \ll t \tau |S_\tau| \ll |\mathcal{P}| = |Q| |B| \,.
$$
But, clearly, $t \le \min \{ |P|, |Q|\}$ and $\tau \le \min \{ |A|, |B| \}$,
thus
$$
    |P|^2 |Q|^{5/4} \ll t^2 |A|^{3/4}
$$
and
we arrive to a contradiction
with our assumption
$|P|^2 |Q|^{5/4} \ge t^{2} |A|^{3/4}$
(actually, we need in $|P|^2 |Q|^{5/4} \gg t^{2} |A|^{3/4}$ but we can assume just $|P|^2 |Q|^{5/4} \ge t^{2} |A|^{3/4}$ increasing the constants in our main bound (\ref{f:D_Fp_pre})).
Finally, we need to consider the case when the third term in (\ref{tmp:03.12.2015_1}) dominates.
In this  situation
$$
    t \tau |S_\tau| \ll |\mathcal{L}| = |S_\tau| |P|
$$
and hence in view of (\ref{f:t_tau_range})
\begin{equation}\label{tmp:02.08.2016_1}
    |P| |Q|^{5/4} |B|^{5/4} \ll |P|^2 \,.
\end{equation}
Recalling (\ref{f:t_tau_range++}), we obtain
$$
    |Q|^{5/4} |B|^{5/4} \ll |Q|^{3/4} |B|^{3/4} \,.
$$
But this is a contradiction
if $Q$ or $B$ is large enough.
    This completes  the proof.
$\hfill\Box$
\end{proof}

\bigskip

By a simple summation we get an immediate consequence of the last lemma.

\begin{corollary}
	Let $A \subset \F_p$ be a set such that $A\subseteq \Sym^{+}_t (P,Q)$, where $P,Q\subseteq \F_p$ are two another sets
	with
    $|P|^2 |Q|^{5/4} \ge t^{2} |A|^{3/4}$ and $|Q| < p^{2/3}$.	
Then for any sets $X,Y$, $|Y| < p^{2/3}$ one has
$$
	\E^{\times}_X (A,Y) \ll \frac{|P|^{4/3} |Q|^{3/2} |Y|^{3/2} |X|^{1/3}}{t^2} \,.
$$
\label{c:D_Fp_pre}
\end{corollary}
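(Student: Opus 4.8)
The plan is to unwind $\E^{\times}_X(A,Y)$ as a sum of squares of a multiplicative correlation function and then invoke Lemma~\ref{l:D_Fp_pre} at each dyadic scale. By definition $\E^{\times}_X(A,Y)=\sum_{s\in X}|A\cap sY|^2$ (the multiplicative counterpart of $\E^{+}_P(A)=\sum_{x\in P}(A\circ A)^2(x)$; replacing $s$ by $s^{-1}$ is harmless below). Since $|A\cap sY|=|A\cap Ys|$, the super-level sets $\{s\in X~:~|A\cap Ys|\ge \tau\}$ are contained in the sets $S_\tau$ of Lemma~\ref{l:D_Fp_pre} taken with $B=Y$: the hypotheses $|P|^2|Q|^{5/4}\ge t^2|A|^{3/4}$ and $|Q|<p^{2/3}$ are exactly those assumed, while $|Y|<p^{2/3}$ plays the role of $|B|<p^{2/3}$. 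Writing $N(\tau)$ for the number of $s\in X$ with $|A\cap Ys|\ge\tau$, we therefore get $N(\tau)\le \min(|X|,\,M\tau^{-3})$ with $M:=|P|^2|Q|^{9/4}|Y|^{9/4}t^{-3}$.

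Next I would run the standard dyadic summation. Since $|A\cap sY|$ is integer-valued, $\E^{\times}_X(A,Y)\ll \sum_{j\ge 0} 2^{2j} N(2^j)\ll \sum_{j\ge 0} 2^{2j}\min(|X|,\,M2^{-3j})$. The two branches of the minimum coincide at $\tau_0:=(M/|X|)^{1/3}$. For $2^j\le\tau_0$ the contribution is $\ll |X|\sum_{2^j\le\tau_0}2^{2j}\ll |X|\tau_0^2$, and for $2^j>\tau_0$ it is $\ll M\sum_{2^j>\tau_0}2^{-j}\ll M/\tau_0$; each geometric series is dominated by its extreme term, so no logarithmic loss occurs, and both bounds equal $M^{2/3}|X|^{1/3}$. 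Substituting the value of $M$ yields
$$
\E^{\times}_X(A,Y)\ll M^{2/3}|X|^{1/3}=\frac{|P|^{4/3}|Q|^{3/2}|Y|^{3/2}|X|^{1/3}}{t^2}\,,
$$
which is the assertion.

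I do not anticipate any genuine obstacle; the computation is the "simple summation" promised in the text, and the only points needing a word of care are the degenerate placements of $\tau_0$. If $\tau_0>\min(|A|,|Y|)$, then the first sum terminates early and is at most $|X|\min(|A|,|Y|)^2\le |X|\tau_0^2$, so the estimate still holds and there is no tail contribution; if $\tau_0<1$, only the tail survives and contributes $\ll M\le M^{2/3}|X|^{1/3}$ (using $M\le|X|$ in that regime). In all cases the sole nontrivial input is Lemma~\ref{l:D_Fp_pre}, and everything else is elementary summation.
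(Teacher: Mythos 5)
Your proof is correct and is exactly the argument the paper intends: the paper offers no written proof beyond "by a simple summation," and your dyadic level-set decomposition of $\E^{\times}_X(A,Y)=\sum_{s\in X}|A\cap sY|^2$, combined with $N(\tau)\le\min\bigl(|X|,\,|P|^2|Q|^{9/4}|Y|^{9/4}t^{-3}\tau^{-3}\bigr)$ from Lemma \ref{l:D_Fp_pre} with $B=Y$, is that summation carried out, yielding $M^{2/3}|X|^{1/3}$ with the stated exponents.
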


Very recently, after submitting this paper on the arXiv S. Stevens and F. de Zeeuw obtained a new incidence bound in $\F_p$, see paper \cite{SZ_inc}.

\begin{theorem}
    Let $A, B\subset \F$  be two sets.
The number of incidences between the point set $A\times B$ and  any set of $n$ lines in $\F^2$ is
$$O( |B|^{1/2} |A|^{3/4} n^{3/4} + |A|^{1/2} |B| n^{1/2}) \,,$$
provided $|A| n \ll p^2$. \\
In particular, if $|A|\ge |B|$ and $n\ge |B|$, then the number of incidences is $O( |B|^{1/2} |A|^{3/4} n^{3/4})$,
provided $|A| n \ll p^2$.
\label{t:S_Z_inc}
\end{theorem}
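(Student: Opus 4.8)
The plan is to reduce the statement to Rudnev's point--plane incidence bound in $\F_p^3$, the main result of \cite{misha}, which already underlies Theorem~\ref{t:SzT_Fp}: for a point set $\mathcal{P}$ and a plane set $\Pi$ in $\F_p^3$ with $|\mathcal{P}| \ls |\Pi|$ and $|\mathcal{P}| = O(p^2)$, the number of incidences is $O(|\mathcal{P}|^{1/2}|\Pi| + k|\Pi|)$, where $k$ is the largest number of points of $\mathcal{P}$ collinear in $\F_p^3$. The asymmetry between $|A|$ and $|B|$ in the conclusion signals that the Cartesian structure of $A\times B$ should be used asymmetrically: one of the two factors is ``spent'' to promote a planar incidence problem into a spatial one so that Rudnev's theorem becomes available.

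First I would clear away the degenerate lines. Vertical lines, horizontal lines, and lines of a few exceptional slopes contribute, by elementary counting, only $O(|A|^{1/2}|B|n^{1/2})$ incidences with $A\times B$, which is inside the claimed bound; hence I may assume every line of $\mathcal{L}$ has the form $y = sx+t$ with $s\neq 0$, and in particular meets $A\times B$ in at most $\min\{|A|,|B|\}$ points. A dyadic splitting of $\mathcal{L}$ by the richness $r(\ell) := |\ell\cap(A\times B)|$ then isolates a subfamily $\mathcal{L}'$ of $m'\ls n$ lines, each with $r(\ell)\in[\Delta,2\Delta)$, carrying $\gtrsim I$ incidences, so it suffices to bound $m'\Delta$.

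The core step is to recast a moment of the incidence count as a spatial point--plane incidence. Two points $(a_1,b_1),(a_2,b_2)$ on a common line $y=sx+t$ satisfy the bilinear relations $b_1-b_2 = s(a_1-a_2)$ and $a_2 b_1 - a_1 b_2 = t(a_1-a_2)$; using these one assembles a point set $\mathcal{P}\sbeq\F_p^3$ out of $A$ and the line parameters of $\mathcal{L}'$, and a plane set $\Pi$ out of $B$, with cardinalities governed by $|A|,|B|,m',\Delta$, so that the number of incidences of $\mathcal{P}$ with $\Pi$ is comparable to the number of (pairs or triples of) collinear points of $A\times B$ lying on lines of $\mathcal{L}'$, a quantity $\sim m'\Delta^2$ (respectively $\sim m'\Delta^3$). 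The hypothesis $|A|n = O(p^2)$ is precisely what keeps $|\mathcal{P}| = O(p^2)$, so Rudnev's theorem applies; and by the product structure the parameter $k$ of the lifted configuration is $O(\max\{|A|,|B|\})$, which is where the term $|A|^{1/2}|B|n^{1/2}$ is born. Inserting this into Rudnev's bound yields an inequality linking $m'$ and $\Delta$; maximizing $m'\Delta$ against it and against $m'\ls n$ gives $I \ll |B|^{1/2}|A|^{3/4}n^{3/4} + |A|^{1/2}|B|n^{1/2}$, and the ``in particular'' follows by checking which term dominates once $|A|\gs|B|$ and $n\gs|B|$.

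The hard part is the design of the lift so that nothing essential is lost: naive reductions --- treating the lines themselves as the planes, which leaves $\mathcal{P}$ two--dimensional, or using $A^3$ and $B^3$ without the richness bookkeeping --- merely reproduce trivial bounds. One must choose $\mathcal{P}$ and $\Pi$ so that their sizes stand in exactly the ratio that makes Rudnev's estimate tight after optimization, while simultaneously keeping $k$ small; disposing cleanly of the sub--configurations with repeated coordinates or exceptional slopes is the remaining technical burden. Alternatively, one could route the argument through a sum--product corollary of \cite{misha}, such as Theorem~\ref{t:RRS}, instead of invoking the point--plane bound directly.
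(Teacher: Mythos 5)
You should first note that the paper does not prove Theorem \ref{t:S_Z_inc} at all: it is quoted from Stevens--de Zeeuw \cite{SZ_inc}, so the only proof to compare with is theirs, and your top-level strategy (reduce to Rudnev's point--plane theorem from \cite{misha}, with $|A|n\ll p^2$ playing the role of the $O(p^2)$ condition on the point set) is indeed their strategy. But your write-up stops exactly where the proof begins, and the concrete hints you give about the lift point the wrong way. The quantity you propose to feed into Rudnev --- the number of pairs (or triples) of points of $A\times B$ on a common line of $\mathcal{L}'$, i.e.\ $m'\Delta^2$ or $m'\Delta^3$ --- is governed by a \emph{system} of two equations, and since a pair of distinct points already determines its joining line there is no single bilinear relation left for a point--plane count; likewise, building the planes ``out of $B$'' cannot reproduce the exponents, because with only $|B|$ planes neither Rudnev's main term nor the hypothesis $|A|n\ll p^2$ matches. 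The reduction that works is different: discard vertical lines, write $I=\sum_{b\in B} r(b)$ with $r(x)=|\{(a,\ell)\in A\times\mathcal{L}: s_\ell a+t_\ell=x\}|$, apply Cauchy--Schwarz in the $b$-variable (this is the only place $B$ enters, giving the factor $|B|$), and bound the collision count $E=|\{(a,\ell,a',\ell'): s_\ell a+t_\ell=s_{\ell'}a'+t_{\ell'}\}|$ by Rudnev's theorem, viewing it as incidences between the $|A|n$ points $(s_\ell,t_\ell,a')$ and the $|A|n$ planes $a x+y-s_{\ell'}z=t_{\ell'}$; both families come from $A\times\mathcal{L}$, which is exactly why $|A|n\ll p^2$ is the operative condition, and $I^2\le |B|E$ gives the main term $|B|^{1/2}|A|^{3/4}n^{3/4}$. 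None of this is in your sketch, and you yourself flag the design of the lift as the open point --- that design is the proof.

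Two subsidiary claims are also incorrect and matter. The collinearity parameter of the lifted point set is not $O(\max\{|A|,|B|\})$ ``by the product structure'': the point set is $\{(s_\ell,t_\ell)\}\times A$, so $k\le\max\{|A|,\mu\}$, where $\mu$ is the largest family of concurrent or parallel lines of $\mathcal{L}$ (their parameter pairs $(s_\ell,t_\ell)$ are collinear), and large pencils must be split off and treated separately --- genuine work, not bookkeeping. And the assertion that horizontal lines contribute only $O(|A|^{1/2}|B|n^{1/2})$ by elementary counting fails: taking the $n$ horizontal lines at heights of a set $B$ with $|B|=n$ and $|A|>n$ gives $|A|n$ incidences, which exceeds both stated terms while $|A|n\ll p^2$ can still hold. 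This actually shows that the statement as quoted needs the hypothesis $|A|\le|B|$ (present in \cite{SZ_inc}), or else a second term of the shape $|B|^{1/2}|A|n^{1/2}$, which is what the argument above naturally produces; under $|A|\le|B|$ your disposal of vertical and horizontal lines and the stated second term are fine, but without that hypothesis no proof of the literal statement exists, so any complete argument must either assume it or adjust the conclusion.
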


It gives us variants  of Lemma \ref{l:D_Fp_pre'} and Corollary \ref{c:D_Fp_pre}.

\begin{lemma}
	Let $A \subset \F_p$ be a set such that $A\subseteq \Sym^{+}_t (P,Q)$, where $P,Q\subseteq \F_p$ are two another sets.
	Then for any set $B$, $|B| |P| |Q| \ll p^{2}$ one has
	\begin{equation}\label{f:D_Fp_pre'}
		|\{ s ~:~ |A\cap Bs| \ge \tau \}| \ll \frac{(|P| |Q| |B|)^{3/2}}{(t \tau)^2} \,.
	\end{equation}
\label{l:D_Fp_pre'}
\end{lemma}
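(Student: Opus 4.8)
The plan is to run the same scheme as in the proof of Lemma~\ref{l:D_Fp_pre}, but with Theorem~\ref{t:S_Z_inc} in place of Theorem~\ref{t:SzT_Fp}, and — this is the point on which the whole estimate hinges — to choose the point/line configuration so that the number of lines does not involve the quantity we are trying to bound. Write $S_\tau=\{s:|A\cap Bs|\ge \tau\}$. Double counting gives
$$
\tau|S_\tau|\le \sum_{s\in S_\tau}|A\cap Bs|=|\{(a,b,s)\in A\times B\times S_\tau:a=bs\}|\,,
$$
and since every $a\in A\subseteq \Sym^{+}_t(P,Q)$ has at least $t$ representations $a=p+q$ with $p\in P$, $q\in Q$, we obtain
$$
t\tau|S_\tau|\le \sigma:=|\{(p,q,b,s)\in P\times Q\times B\times S_\tau:p+q=sb\}|\,.
$$
A direct count (given $p,q,b$ the element $s$ is determined) yields the trivial bound $\sigma\le|P||Q||B|$, hence $|S_\tau|\le |P||Q||B|/(t\tau)$. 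This already implies (\ref{f:D_Fp_pre'}) whenever $t\tau\ll(|P||Q||B|)^{1/2}$, so from now on we may assume $t\tau\gg(|P||Q||B|)^{1/2}$.

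Next I would read $\sigma$ as an incidence count. Take the point set $\mathcal{P}=S_\tau\times P$ and the family $\mathcal{L}$ of the $|B||Q|$ lines $l_{b,q}=\{(x,y):y=bx-q\}$ with $b\in B$, $q\in Q$: a point $(s,p)$ lies on $l_{b,q}$ exactly when $p+q=sb$, and distinct pairs $(b,q)$ give distinct lines, so $\sigma=\mathcal{I}(\mathcal{P},\mathcal{L})$ with $|\mathcal{P}|=|S_\tau||P|$ and $n:=|\mathcal{L}|=|B||Q|$. Now apply Theorem~\ref{t:S_Z_inc} with the \emph{first} coordinate set chosen to be $P$ and the second to be $S_\tau$; the hypothesis to be verified is $|P|\,n=|P||Q||B|\ll p^2$, which is precisely our assumption. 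This gives
$$
t\tau|S_\tau|\le \sigma\ll |S_\tau|^{1/2}|P|^{3/4}(|B||Q|)^{3/4}+|P|^{1/2}|S_\tau|(|B||Q|)^{1/2}\,.
$$

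It remains to split into cases according to which term dominates. If the first term dominates, then $t\tau|S_\tau|^{1/2}\ll |P|^{3/4}(|B||Q|)^{3/4}$, and squaring and rearranging gives $|S_\tau|\ll (|P||Q||B|)^{3/2}/(t\tau)^2$, which is exactly (\ref{f:D_Fp_pre'}). If the second term dominates, then after cancelling $|S_\tau|$ we get $t\tau\ll (|P||Q||B|)^{1/2}$, i.e.\ we are back in the range already handled by the trivial bound above; so there is nothing left to prove. (One also checks the harmless edge case $0\in B$ separately, or simply replaces $B$ by $B\setminus\{0\}$ throughout.) The consequence for energies, as in Corollary~\ref{c:D_Fp_pre}, would then follow by the same dyadic summation.

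The step I expect to need the most care is not any single computation but the choice of configuration in the incidence application: putting the points at $S_\tau\times P$ and parametrising the lines by $B\times Q$ makes $n=|B||Q|$ independent of $|S_\tau|$ and makes the constraint of Theorem~\ref{t:S_Z_inc} read exactly $|P||Q||B|\ll p^2$ with the main incidence term proportional to $|S_\tau|^{1/2}$ (so that it can be solved for $|S_\tau|$). With the opposite assignment — points $B\times Q$, lines indexed by $P\times S_\tau$, as in Lemma~\ref{l:D_Fp_pre} — one is forced into an a priori bound on $|S_\tau|$, a subset-extraction step, and a more delicate case analysis, and the clean form (\ref{f:D_Fp_pre'}) is lost.
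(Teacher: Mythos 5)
Your proof is correct, and it is essentially the argument the paper leaves implicit: Lemma \ref{l:D_Fp_pre'} is stated without proof as the variant of Lemma \ref{l:D_Fp_pre} obtained by substituting the Stevens--de Zeeuw bound (Theorem \ref{t:S_Z_inc}) for Theorem \ref{t:SzT_Fp}, which is exactly what you carry out. Your configuration (points $P\times S_\tau$ after swapping coordinates, lines indexed by $B\times Q$, so that the main term carries $|S_\tau|^{1/2}$ and the hypothesis reads $|P||Q||B|\ll p^2$) is indeed the one that produces the stated bound.
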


\begin{corollary}
	Let $A \subset \F_p$ be a set such that $A\subseteq \Sym^{+}_t (P,Q)$, where $P,Q\subseteq \F_p$ are two another sets.
Then for any set $Y$, $|Y| |P| |Q| \ll p^{2}$
one has
$$
	\E^{\times} (A,Y) \lesssim \frac{(|P||Q||Y|)^{3/2}}{t^{2}} \,.
$$
\label{c:D_Fp_pre'}
\end{corollary}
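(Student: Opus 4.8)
The plan is to deduce this exactly as Corollary~\ref{c:D_Fp_pre} was deduced from Lemma~\ref{l:D_Fp_pre}, i.e. by a routine dyadic decomposition, the only new input being the sharper counting bound of Lemma~\ref{l:D_Fp_pre'}. First I would express the multiplicative energy as a sum over dilations. Parametrising a solution of $a_1 y_1 = a_2 y_2$ with $a_i\in A$, $y_i\in Y$ by the common value $s=a_1 y_2^{-1}=a_2 y_1^{-1}$ (the solutions with a zero coordinate number at most $O(|A|+|Y|)$ and are negligible), one gets
$$
    \E^\times(A,Y) = \sum_{s} |A\cap sY|^2 \,,
$$
and of course $\sum_s |A\cap sY| = |A||Y|$, while $|A\cap sY|\le \min\{|A|,|Y|\}$ for every $s$.

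Next I would dyadically pigeonhole the fibre size $|A\cap sY|$. For a dyadic parameter $\tau$ set $S_\tau=\{\, s ~:~ |A\cap sY| \ge \tau \,\}$; since $sY=Ys$, Lemma~\ref{l:D_Fp_pre'} applied with $B=Y$ (its hypothesis $|Y||P||Q|\ll p^2$ is precisely the assumption of the Corollary) gives
$$
    |S_\tau| \ll \frac{(|P||Q||Y|)^{3/2}}{t^2\tau^2} \,.
$$
Hence the contribution to $\E^\times(A,Y)$ of the $s$ with $|A\cap sY|\in(\tau/2,\tau]$ is at most $\tau^2|S_{\tau/2}| \ll (|P||Q||Y|)^{3/2}/t^2$, a bound independent of $\tau$. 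Since $|A\cap sY|\le \min\{|A|,|Y|\}$, only $O(\log|A|)$ dyadic scales occur, so summing over them yields
$$
    \E^\times(A,Y) \lesssim \frac{(|P||Q||Y|)^{3/2}}{t^2} \,,
$$
the single logarithmic loss being absorbed into $\lesssim$.

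There is no genuine obstacle here; the points needing a little care are merely the correct parametrisation turning $\E^\times(A,Y)$ into $\sum_s|A\cap sY|^2$, checking that the condition $|Y||P||Q|\ll p^2$ matches the range of validity of Lemma~\ref{l:D_Fp_pre'} (with $B=Y$, noting $|sY|=|Y|$), and the observation that every dyadic scale contributes the same amount so that only a harmless $\log$-factor is lost. If one wished to avoid even that factor, one could instead split off fibres of size below a threshold $\Delta$, bound their total contribution trivially by $\Delta|A||Y|$, apply Lemma~\ref{l:D_Fp_pre'} only to the remaining scales, and optimise in $\Delta$; but since the statement is phrased with $\lesssim$ this refinement is unnecessary.
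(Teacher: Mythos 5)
Your proposal is correct and is exactly the derivation the paper intends: Corollary \ref{c:D_Fp_pre'} is stated there without proof as the variant of Corollary \ref{c:D_Fp_pre}, which is obtained from the corresponding lemma ``by a simple summation'', i.e.\ precisely your identity $\E^{\times}(A,Y)=\sum_{s}|A\cap sY|^2$ followed by a dyadic decomposition of the fibre sizes, an application of Lemma \ref{l:D_Fp_pre'} with $B=Y$ (whose hypothesis $|Y||P||Q|\ll p^2$ matches the corollary), and absorption of the $O(\log)$ many scales into the $\lesssim$. The only quibble is your claim that solutions with a zero coordinate number $O(|A|+|Y|)$ — if $0\in A$ they can number about $|Y|^2$ — but this degenerate situation is ignored by the paper itself and is harmless in every context where the corollary is applied.
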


We use Corollary \ref{c:D_Fp_pre} in the proof of the third part of forthcoming Proposition \ref{p:technical_mc} only and hence in the third part in Theorem \ref{t:main}.
Replacing this Corollary onto Corollary \ref{c:D_Fp_pre'}, one can obtain a slightly better result.
We do not make such calculations.


\section{The multiplicative energy of the spectrum}
\label{sec:energy}

Now recall the notion of the spectrum $\Spec_\eps (A)$ of a set $A$ and formulate the required result about the structure of
$\Spec_\eps (A)$.
Let $A\subseteq \F_p$ be a set, and $\eps \in (0,1]$ be a real number.
Define
$$
    \Spec_\eps (A) = \{ r \in \F_p ~:~ |\FF{A}(r)| \ge \eps |A| \} \,.
$$
Clearly, $0\in \Spec_\eps (A)$, and $\Spec_\eps (A) = - \Spec_\eps (A)$.
In this section we denote by $\d$ the density of our set $A$, that is, $\d = |A|/p$.
From Parseval identity (\ref{F_Par}), we have a simple upper bound for the size of the spectrum, namely,
\begin{equation}\label{f:spec_Par}
    |\Spec_\eps (A)| \le \frac{p}{|A| \eps^2}  = \frac{1}{\d \eps^2} \,.
\end{equation}

We need in a result from \cite{S_les}  (tight bounds are contained in paper \cite{S_diss})
which shows that the spectrum has a rich additive structure.

\begin{lemma}
    Let $A \subset \F_p$ be a set, $k\ge 2$ be an integer, and $\eps \in (0,1]$ be a real number.
    Then for any $B\subseteq \Spec_\eps (A)$ one has
\begin{equation}\label{f:les}
    \T^+_k (B) \ge \eps^{2k} |B|^{2k} \cdot |A|/p \,.
\end{equation}
\label{l:les}
\end{lemma}

\bigskip

We need in  Proposition 16 from \cite{RSS} and a combinatorial lemma which is contained  in the proof of this proposition.
We give the  proof of this  lemma for completeness.

\begin{lemma}
    Let $A\subseteq \Gr$ be a set, $P\subseteq A-A$, $P=-P$.
    Then there is $A_* \subseteq A$ and a number $q$, $q \lesssim |A_*|$
    such that for any $x\in A_*$ one has $(A*P) (x) \ge q$,
     and $\sigma_P (A) \sim |A_*| q$.
\label{l:Misha_c}
\end{lemma}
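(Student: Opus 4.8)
Looking at Lemma \ref{l:Misha_c}, the claim is a standard dyadic pigeonholing statement: given a set $A \subseteq \Gr$ and a symmetric $P \subseteq A-A$, we want to extract a large "level set" $A_* \subseteq A$ on which the convolution $(A*P)(x)$ is essentially constant (equal to some $q$), capturing up to logarithmic losses the full weight $\sigma_P(A) = \sum_{x \in A} (A*P)(x)$ — wait, I should check: actually $\sigma_P(A) = \sum_{x \in P}(A \circ A)(x)$, which by symmetry of $P$ equals $\sum_x (A*P)(x) \cdot$ something... let me just work with the convolution sum.

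=== PLAN ===

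\textbf{Approach.} This is a routine dyadic-pigeonholing argument. The plan is to stratify the set $A$ according to the dyadic size of the function $x \mapsto (A*P)(x)$, observe that only $O(\log |A|)$ dyadic levels carry non-negligible mass, and then pick the heaviest level.

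\textbf{Step 1: Identify the range of the convolution.} For every $x \in \Gr$ we have $0 \le (A*P)(x) \le |P| \le |A|^2$ (in fact $(A*P)(x) \le \min\{|A|,|P|\}$). Since $P = -P \subseteq A - A$, we note that $\sigma_P(A) = \sum_{x\in P}(A\circ A)(x) = \sum_{y \in A}(A*P)(y)$, using $P=-P$ to rewrite $(A\circ A)(x) = \sum_a A(a)A(a+x)$ and summing over $x \in P$; in particular $\sigma_P(A) \le |A||P| \le |A|^3$, so $\log \sigma_P(A) \lesssim \log|A|$.

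\textbf{Step 2: Dyadic decomposition.} For each integer $j \ge 0$ set
$$
A_j := \{ x \in A ~:~ 2^{j} \le (A*P)(x) < 2^{j+1} \} \,,
$$
together with the exceptional set $A_{-\infty} := \{ x \in A : (A*P)(x) = 0\}$ on which the contribution to $\sigma_P(A)$ vanishes. Every $x \in A$ with $(A*P)(x) \ge 1$ lies in exactly one $A_j$, and only the levels $0 \le j \le \log(|P|) \lesssim \log|A|$ are nonempty. Hence
$$
\sigma_P(A) = \sum_{x \in A} (A*P)(x) = \sum_{j} \sum_{x \in A_j} (A*P)(x) \le \sum_{j=0}^{O(\log|A|)} 2^{j+1} |A_j| \,.
$$

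\textbf{Step 3: Pigeonhole and conclude.} By the pigeonhole principle there is some level $j_0$ with $2^{j_0+1}|A_{j_0}| \gtrsim \sigma_P(A)$, i.e. $2^{j_0}|A_{j_0}| \sim \sigma_P(A)$, up to the $\log|A|$ factor absorbed by $\sim$. Put $A_* := A_{j_0}$ and $q := 2^{j_0}$. Then by construction $(A*P)(x) \ge q$ for every $x \in A_*$, and $\sigma_P(A) \sim |A_*| q$. It remains to verify $q \lesssim |A_*|$: on one hand $\sum_{x\in A_*}(A*P)(x) \le |A_*||P| $, but more directly, $\sigma_P(A) = \sum_{y\in A}(A*P)(y) \ge \sum_{y\in A_*}(A*P)(y) \ge q|A_*|$, while also $\sigma_P(A) \le \sum_{y\in A} |P \cap (y - A)| $... the cleanest route: since $\sigma_P(A) \sim q|A_*|$ and also $\sigma_P(A) = \sum_{x\in P}(A\circ A)(x) \le |P| \cdot |A|$ doesn't immediately give it. Instead note $q|A_*| \sim \sigma_P(A) \le |A|^2$ is too weak; the correct bound comes from $q \le (A*P)(x) \le |A|$ being not quite enough either. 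The sharp statement $q \lesssim |A_*|$ follows because $\sigma_P(A) \sim q|A_*|$ while simultaneously $\sigma_P(A) \le |A| \cdot \max_x(A*P)(x)$; combined with $q^2|A_*| \le q \sum_{x\in A_*}(A*P)(x) = q \sum_{x\in A_*}\sum_{a} A(a) P(x-a) \le q|A| \cdot(\text{something})$...

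\textbf{Expected main obstacle.} The only genuinely delicate point is establishing $q \lesssim |A_*|$ rather than a weaker bound; I expect one must exploit that $P \subseteq A-A$ so that $\sum_{x}(A*P)(x) = \sum_{x\in P}(A\circ A)(x) \le \sum_{x\in A-A}(A\circ A)(x) = |A|^2$, hence $q|A_*| \lesssim |A|^2$. Then if $|A_*| < q$ we would get $q^2 < q|A_*| \cdot(\text{polylog})$ — this still needs care. Most likely the intended argument simply observes $q \le \|A*P\|_\infty \le |A|$ and $|A_*| \ge \sigma_P(A)/(2q) \gtrsim$ something, and uses $\sigma_P(A) \ge q$ (take a single heavy $x$) together with the dyadic maximality to force $|A_*| \gtrsim q$; I would reconstruct precisely this chain from Proposition 16 of \cite{RSS}. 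Everything else is bookkeeping with $\lesssim$ and $\sim$ hiding the $O(\log|A|)$ losses.
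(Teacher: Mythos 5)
Your dyadic decomposition reproduces the paper's first pigeonholing step and correctly yields $A_*\subseteq A$, $q$ with $(A*P)(x)\ge q$ on $A_*$ and $\sigma_P(A)\sim q|A_*|$; but the proof has a genuine gap at exactly the point you flag: the bound $q\lesssim |A_*|$ is never established, and none of the routes you sketch can establish it. Bounds like $q\le \max_x (A*P)(x)\le |A|$ or $q|A_*|\lesssim\sigma_P(A)\le |A|^2$ are compatible with $q\gg|A_*|$. Concretely, if $A=-A$ is a Sidon-type set containing $0$ and $P=A$ (so $P=-P\subseteq A-A$), then $(A*P)(0)=|A|$ while $(A*P)(x)=O(1)$ for $x\ne 0$, so the heaviest dyadic level may well be $A_*=\{0\}$ with $q\approx|A|$, carrying a $\gtrsim 1/\log$ share of $\sigma_P(A)$. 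Worse, the admissible set $A_*$ in the lemma need not be a level set of $(A*P)$ on $A$ at all, so no bookkeeping with your sets $A_j$ alone can close the gap; note also that your argument never genuinely uses the hypothesis $P=-P$, which is a warning sign.

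The paper closes exactly this gap with a case split and a second, dual pigeonholing. If the first level set $A'$, with $q'=\min_{x\in A'}(A*P)(x)$ and $\sigma_P(A)\sim q'|A'|$, satisfies $q'\le|A'|$, one takes $A_*=A'$, $q=q'$. Otherwise ($q'>|A'|$) one uses $P=-P$ to switch the roles of the two copies of $A$:
$$
\sigma_P(A)\lesssim\sum_{x\in A'}(A*P)(x)=\sum_{x\in A}(A'*P)(x)\,,
$$
and pigeonholes the function $(A'*P)$ on $A$ to get $A''\subseteq A$ and $q''=\min_{x\in A''}(A'*P)(x)$ with $\sigma_P(A)\sim q''|A''|$; since $A'\subseteq A$, one still has $(A*P)(x)\ge (A'*P)(x)\ge q''$ on $A''$. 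Now the trivial bound $q''\le|A'|$ together with the standing assumption $q'>|A'|$ gives the chain $|A''|\,|A'|\ge |A''|q''\gtrsim\sigma_P(A)\ge |A'|q'>|A'|q''$, whence $q''\lesssim|A''|$, and one takes $A_*=A''$, $q=q''$. This set-changing dual step is the missing idea in your proposal; the rest of your write-up (range of the convolution, $O(\log|A|)$ levels, pigeonhole) matches the paper's bookkeeping.
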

\begin{proof}
    We have
    $$
        \sigma := \sigma_P (A) = \sum_{x\in P} (A\c A) (x) = \sum_{x\in A} (A*P)(x) \,.
    $$
    Using the pigeonhole principle, we find $A'\subseteq A$ such that $(A*P) (x)$
    differ by a multiplicative factor of at most twice  on
    $A'$
    and $\sigma \sim q' |A'|$, where $q' = \min_{x\in A'} (A*P)(x)$.
    If $q'\le |A'|$, then put $A_* = A'$, $q=q'$ and we are done. Suppose not.
    By assumption $P=-P$, and thus we get
    $$
        \sigma \lesssim \sum_{x\in A'} (A*P)(x) = \sum_{x\in A} (A'*P)(x) \,.
    $$
    Then applying the pigeonhole principle one more time, we find $A''\subseteq A$ such that $(A'*P) (x)$ differ by a multiplicative factor
    of at most twice on $A''$ and $\sigma \sim q'' |A''|$, where $q'' = \min_{x\in A''} (A'*P)(x)$.
    Using the inequality  $q' > |A'|$ and a trivial bound $q'' \le |A'|$, we obtain
    $$
        |A''| |A'| \ge |A''| q'' \gtrsim \sigma \ge |A'| q' > q'' |A'|
    $$
    and hence $q'' \lesssim |A''|$.
    After that we put $A_* = A''$, $q=q''$.
    This completes  the proof.
$\hfill\Box$
\end{proof}

\bigskip

Recall a result from \cite{RSS}, see Proposition 16 from here.

\begin{proposition}
    Let $S \subseteq \F_p$ be a set, $|S|^6 \lesssim p^2 \E^{\times} (S)$.
    Then there is a set $S_1 \subseteq S$, $|S_1|^2 \gtrsim \E^{\times} (S)/ |S|$ and
\begin{equation}\label{f:E*+}
    \E^{+} (S_1)^2 \E^{\times} (S)^3 \lesssim |S_1|^{11} |S|^3 \,.
\end{equation}
    The same result holds if one replace $+$ onto $\times$ and vice versa.
\label{p:E*+}
\end{proposition}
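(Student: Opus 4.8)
**Proof proposal for Proposition \ref{p:E*+}.**

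The plan is to run a standard dyadic/popularity decomposition on the multiplicative convolution $S \otimes S$, combined with the sum--product input from Theorem \ref{t:RRS} and the combinatorial extraction of Lemma \ref{l:Misha_c}. First I would write $\E^{\times}(S) = \sum_x (S\otimes S)(x)^2$ and, using the pigeonhole principle (as in Lemma \ref{l:Misha_c}, but in the multiplicative setting), locate a popular level: a dilate set $S' \subseteq S$ and a parameter $q$ with $q \lesssim |S'|$, such that $(S \otimes S')(x) \ge q$ for all $x$ in some quotient-type set, and $\E^{\times}(S) \lesssim |S'| q \cdot (\text{something})$; more precisely I expect to extract $S_1 \subseteq S$ with $|S_1|^2 \gtrsim \E^{\times}(S)/|S|$ coming directly from the bound $\E^{\times}(S) \le |S| \cdot \max_x (S\otimes S)(x)$ and a dyadic choice of the popular fiber. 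The hypothesis $|S|^6 \lesssim p^2 \E^{\times}(S)$ is exactly what is needed to guarantee that the relevant triple products stay below $p^2$, so that Theorem \ref{t:RRS} is applicable — this is where that technical assumption gets used.

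The heart of the argument is to bound $\E^{+}(S_1)$ from above by feeding $S_1$ (or rather the set it sits inside, as a large-fiber set of the multiplicative convolution) into Theorem \ref{t:RRS} with an appropriate choice of the three sets $A, B, C$. Concretely, the fact that every element of the relevant set has $\gtrsim q$ representations as a product $s_1 s_2^{-1}$ lets one rewrite an additive-energy count for $S_1$ as (a multiple of $q^{-2}$ times) a count of the shape $a_1(b_1 + c_1) = a_2(b_2 + c_2)$, and Theorem \ref{t:RRS} then gives $\ll (|S_1||S||S|)^{3/2} + |S_1||S|^2 \cdot |S|$ type bounds. Rearranging the resulting inequality — substituting the lower bound $|S_1|^2 \gtrsim \E^{\times}(S)/|S|$ for the powers of $q$, and collecting the powers of $|S_1|$ and $|S|$ — should produce exactly $\E^{+}(S_1)^2 \E^{\times}(S)^3 \lesssim |S_1|^{11}|S|^3$. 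The symmetric statement (swapping $+$ and $\times$) follows by the identical argument with additive and multiplicative roles interchanged, using Theorem \ref{t:RRS} in the form where the outer variable multiplies a sum, versus where it adds to a product.

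The main obstacle, I expect, is bookkeeping: choosing the dyadic level $q$ and the set $S_1$ so that \emph{all three} of the following hold simultaneously — (i) $|S_1|^2 \gtrsim \E^{\times}(S)/|S|$, (ii) the second (non-Cauchy--Schwarz-tight) term $|A||B||C|\max\{\dots\}$ in Theorem \ref{t:RRS} does not dominate, and (iii) the sizes stay in the range $|A||B||C| = O(p^2)$ forced by the hypothesis $|S|^6 \lesssim p^2 \E^{\times}(S)$. Handling the case where the max-term in Theorem \ref{t:RRS} wins will likely require a separate, easier sub-argument (a direct Cauchy--Schwarz bound on $\E^{+}(S_1)$ by $|S_1|^3$), and one then checks that in that regime the claimed inequality still holds because $\E^{\times}(S)$ is correspondingly small. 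I would also need to track the logarithmic losses carefully, but since the statement is phrased with $\lesssim$ and $\gtrsim$ these are harmless.
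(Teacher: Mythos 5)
You should first note that the paper does not actually prove Proposition \ref{p:E*+}: it is imported verbatim from \cite{RSS} (Proposition 16 there), and only the auxiliary Lemma \ref{l:Misha_c} is reproved "for completeness". So the relevant benchmark is the argument of \cite{RSS}, whose overall shape — dyadic popularity on the energy, the extraction of Lemma \ref{l:Misha_c}, a collision count estimated by a Rudnev--type theorem, with the hypothesis $|S|^6 \lesssim p^2 \E^{\times}(S)$ used only to keep the triple product of set sizes below $p^2$ — is exactly the template you describe, and is run explicitly in this paper inside Proposition \ref{p:technical_mc}. Your accounting of the role of the hypothesis is correct, and the exponents do close with this scheme (one uses $q|S_1| \sim t|P|$, $q \lesssim |S_1|$, $t^2|P| \gtrsim \E^{\times}(S)$ and $t \le |S|$); note however that $|S_1|^2 \gtrsim \E^{\times}(S)/|S|$ comes from $|S_1|^2 \gtrsim q|S_1| \sim t|P| \gtrsim \E^{\times}(S)/|S|$, not from "$\E^{\times}(S) \le |S|\max_x (S\otimes S)(x)$", which is not a valid inequality.

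The genuine gap is in your central reduction. If every $x \in S_1$ has $\ge q$ multiplicative representations $x = sp$ with $s \in S$, $p \in P \subseteq S/S$, then $\E^{+}(S_1) \le q^{-2}\,|\{\, x + sp = x' + s'p' \,\}|$, a collision count of the shape $a_1 + b_1c_1 = a_2 + b_2c_2$; no change of variables turns this into the shape $a_1(b_1+c_1) = a_2(b_2+c_2)$. Theorem \ref{t:RRS} as quoted covers only the latter, which is the tool for the \emph{reversed} statement (elements with many additive representations $x = s+p$, whose multiplicative energy is $\le q^{-2}|\{(s+p)y = (s'+p')y'\}|$, exactly as in Proposition \ref{p:technical_mc}). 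For the direction literally stated you need the companion estimate for $a + bc = a' + b'c'$ with main term $(|A||B||C|)^{3/2}$ — true, and available in \cite{AMRS}, \cite{RRS} via the same incidence theorem, but not among the tools quoted here — so as written your key step invokes the wrong theorem, though the proof is repairable once that estimate is imported. A second, smaller defect: your fallback for the regime where the non--main term dominates (the trivial bound $\E^{+}(S_1) \le |S_1|^3$ plus "then $\E^{\times}(S)$ is small") does not check out. With only $|S_1|^2 \gtrsim \E^{\times}(S)/|S|$ guaranteed, $\E^{+}(S_1)^2\E^{\times}(S)^3 \le |S_1|^6 \E^{\times}(S)^3 \lesssim |S_1|^{11}|S|^3$ would force $\E^{\times}(S) \lesssim |S|$, contradicting $\E^{\times}(S) \ge |S|^2$; so this case requires an honest sub--case analysis of the maximum term (compare how the analogous terms force the extra conditions (\ref{cond:technical_mc}) in the proof of Proposition \ref{p:technical_mc}), not a one--line dismissal.
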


Now we are ready to prove that
any (large) subset of the spectrum
is always has small multiplicative energy.
This is one of the  main results of this section.

\begin{theorem}
    Let $A \subset \F_p$ be a set,  and $\eps \in (0,1]$ be a real number.
    Then for any $B\subseteq \Spec_\eps (A)$, $|B| < \d^{-1/6} \eps^{-2/3} \sqrt{p}$, one has
\begin{equation}\label{f:energy*spec}
    \E^{\times} (B) \lesssim |B|^2 \cdot \d^{-2/3} \eps^{-8/3} \,.
\end{equation}
\label{t:energy*spec}
\end{theorem}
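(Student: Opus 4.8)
The plan is to combine the additive-richness of the spectrum (Lemma~\ref{l:les}) with the structural dichotomy of Proposition~\ref{p:E*+}. First I would like to apply Proposition~\ref{p:E*+} with $S=B$, which requires the hypothesis $|B|^6 \lesssim p^2 \E^\times(B)$. If this hypothesis \emph{fails}, then $\E^\times(B) \lesssim |B|^6/p^2$, and since we are in the regime $|B| < \d^{-1/6}\eps^{-2/3}\sqrt p$, i.e. $|B|^6 < \d^{-1}\eps^{-4} p^3$, we get $\E^\times(B) \lesssim \d^{-1}\eps^{-4} p = |B|^2 \cdot (p/|B|^2) \d^{-1}\eps^{-4}$; one checks that $p/|B|^2 \ge \d^{2/3}\eps^{-4/3}\cdot(\text{something})$ does not immediately give the claimed bound, so more likely the failure case is handled directly by noting $\E^\times(B)\lesssim |B|^6/p^2 \lesssim |B|^2\d^{-2/3}\eps^{-8/3}$ precisely when $|B|^4 \lesssim \d^{-2/3}\eps^{-8/3} p^2$, which is exactly $|B|^2 \lesssim \d^{-1/3}\eps^{-4/3} p$, i.e. $|B| \lesssim \d^{-1/6}\eps^{-2/3}\sqrt p$ — matching the hypothesis of the theorem. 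So the failure case gives the bound for free.

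Now assume the hypothesis of Proposition~\ref{p:E*+} holds. I would apply the ``$+$/$\times$ swapped'' version: there is $S_1 \subseteq B$ with $|S_1|^2 \gtrsim \E^\times(B)/|B|$ and $(\E^\times(S_1))^2 (\E^\times(B))^3 \lesssim |S_1|^{11}|B|^3$ — wait, I must be careful about which energy appears where. The cleaner route: apply Proposition~\ref{p:E*+} in its stated form to get $S_1 \subseteq B$ with $|S_1|^2 \gtrsim \E^\times(B)/|B|$ and $\E^+(S_1)^2 \E^\times(B)^3 \lesssim |S_1|^{11}|B|^3$. The point of passing to $S_1$ is that $S_1$ is still a subset of $\Spec_\eps(A)$, so Lemma~\ref{l:les} applies to it. Taking $k=2$ in Lemma~\ref{l:les} gives $\T^+_2(S_1) = \E^+(S_1) \ge \eps^4 |S_1|^4 \d$. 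Substituting this lower bound for $\E^+(S_1)$ into the inequality $\E^+(S_1)^2 \E^\times(B)^3 \lesssim |S_1|^{11}|B|^3$ yields
$$
\eps^8 |S_1|^8 \d^2 \cdot \E^\times(B)^3 \lesssim |S_1|^{11} |B|^3 \,,
$$
hence $\E^\times(B)^3 \lesssim |S_1|^3 |B|^3 \eps^{-8}\d^{-2}$, so $\E^\times(B) \lesssim |S_1| |B| \eps^{-8/3}\d^{-2/3}$. Since trivially $|S_1| \le |B|$, this already gives $\E^\times(B) \lesssim |B|^2 \eps^{-8/3}\d^{-2/3}$, which is exactly \eqref{f:energy*spec}.

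So the whole argument is: dispose of the degenerate case where Proposition~\ref{p:E*+} does not apply (using the size restriction on $|B|$), then in the main case pass to the subset $S_1$, feed the additive-energy lower bound from Lemma~\ref{l:les} (with $k=2$) into the structural inequality \eqref{f:E*+}, and bound $|S_1| \le |B|$ at the end. The main obstacle I anticipate is bookkeeping: making sure the size hypothesis $|B| < \d^{-1/6}\eps^{-2/3}\sqrt p$ is exactly what is needed both to kill the degenerate case and (if needed) to verify the applicability condition $|B|^6 \lesssim p^2\E^\times(B)$ is not vacuous, and tracking the $\lesssim$ (logarithmic) factors through Proposition~\ref{p:E*+} and Lemma~\ref{l:Misha_c}. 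One should also double-check whether a better exponent could be extracted by using $|S_1|^2 \gtrsim \E^\times(B)/|B|$ rather than $|S_1|\le|B|$ at the last step, but for the stated bound the crude inequality suffices.
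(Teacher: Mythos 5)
Your proposal is correct and follows essentially the same route as the paper: handle the degenerate case via the size hypothesis $|B| < \d^{-1/6}\eps^{-2/3}\sqrt p$ (the paper phrases the dichotomy dually, assuming the bound fails and deducing $|B|^6 \lesssim p^2\E^\times(B)$, but this is the same computation), then apply Proposition~\ref{p:E*+} to $B$, feed in Lemma~\ref{l:les} with $k=2$ for the subset $S_1\subseteq\Spec_\eps(A)$, and finish with $|S_1|\le|B|$. No gaps.
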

\begin{proof}
	If $\E^{\times} (B) \lesssim |B|^2 \cdot \d^{-2/3} \eps^{-8/3}$, then it is nothing to prove.
	Otherwise, in view of our assumption, we have $|B|^6 \lesssim p^2 \E^{\times} (B)$.
    Applying Proposition \ref{p:E*+} with $S=B$, we find $B_1 \subseteq B$ such that
    $|B_1|^2 \gtrsim \E^{\times} (B)/ |B|$  and
$$
    \E^{+} (B_1)^2 \E^{\times} (B)^3 \lesssim |B_1|^{11} |B|^3 \,.
$$
    Further, using Lemma \ref{l:les} for $B=B_1$ and $k=2$, we have
    $\E^{+} (B_1) \ge \d \eps^4 |B_1|^4$.
    Thus
$$
    \E^{\times} (B)^3 \lesssim |B_1|^3 |B|^3 \d^{-2} \eps^{-8}  \le |B|^6 \d^{-2} \eps^{-8}
$$
as required.
$\hfill\Box$
\end{proof}


\begin{example}
    Let $\eps \gg 1$, $B = \Spec_\eps (A)$, $|A| \gg p^{2/5}$ and the size of $B$ is comparable with the upper bound which is given by (\ref{f:spec_Par}),
    namely, $|B| \gg \d^{-1}$.
    Then $\E^{\times} (B) \lesssim |B|^{8/3}$.
    It means that we have a non--trivial estimate for the multiplicative energy of the spectrum in this case.
\end{example}


\begin{remark}
    The proof of Theorem \ref{t:energy*spec} shows that
    a similar
    statement about the energy holds for wider class of so--called {\it connected} sets, that is, sets $S$ such that for any $S' \subseteq S$, $|S'| \gg |S|$ one has $\E (S') \gg \E (S)$, see the rigorous definition in  \cite{s_energy}, say.
\end{remark}


It is possible to increase the size of the set $S_1$ in Proposition \ref{p:E*+} decreasing the upper estimate for the product of energies in (\ref{f:E*+}).
Our arguments  mimic the proof of Corollary 22 from \cite{KS2}.

\begin{corollary}
	Let $S \subseteq \F_p$ be a set, $|S|^6 \lesssim p^2 \E^{\times} (S)$.
    Then there is a set $S' \subseteq S$, $|S'|^3 \gtrsim \E^{\times} (S)$ and
\begin{equation}\label{f:E*+_new}
    \E^{+} (S')^2 \E^{\times} (S)^3 \lesssim |S|^{14} \,.
\end{equation}
    The same result holds if one replace $+$ onto $\times$ and vice versa.
\label{c:E^*+_new}
\end{corollary}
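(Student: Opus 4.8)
**Proof plan for Corollary \ref{c:E^*+_new}.**

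The plan is to run Proposition \ref{p:E*+} not on $S$ directly but on a pigeonholed dyadic piece, and then reassemble. First I would apply Proposition \ref{p:E*+} to get an initial $S_1 \subseteq S$ with $|S_1|^2 \gtrsim \E^\times (S)/|S|$ satisfying $\E^+(S_1)^2 \E^\times(S)^3 \lesssim |S_1|^{11}|S|^3$; this is the building block. To enlarge it, the idea (mimicking Corollary 22 of \cite{KS2}) is to iterate: having found $S_1$, delete it from $S$ and look at $S \setminus S_1$. Either $\E^\times(S\setminus S_1)$ has dropped below the threshold $|S\setminus S_1|^6 \lesssim p^2 \E^\times(S\setminus S_1)$ fails, in which case $\E^\times(S\setminus S_1) \lesssim |S|^6/p^2 \lesssim \E^\times(S)$ and we can absorb that part trivially, or Proposition \ref{p:E*+} applies again and produces a new piece $S_2$ disjoint from $S_1$ with an analogous bound. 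Repeating, I get a partition (up to a negligible remainder) $S = \bigsqcup_j S_j$ into $O(\log|S|)$ pieces — by dyadic pigeonholing on the sizes $|S_j|$ one finds a subfamily, whose union I call $S'$, consisting of $\sim$ equally sized pieces $S_j$ with $|S_j| \sim |S_1|$, and such that $\E^\times(S) \lesssim \sum_j \E^\times(S_j) \cdot (\text{number of pieces})$ or, more carefully, $\E^\times(S)$ is dominated by cross terms controlled by the largest piece.

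The key inequality is the triangle inequality for the energy in the form given by Lemma \ref{l:norm_E+} (or rather its multiplicative analogue, obtained by replacing $\c$ with the multiplicative analogue $\otimes$ throughout its proof): if $S' = \bigsqcup_{j} S_j$ then $\E^{\times}(S')^{1/2} = \E^\times_{S'}(S',S')^{1/2} \le \sum_j \E^\times_{S'}(S_j, S')^{1/2}$, and iterating in the second variable too gives $\E^\times(S')^{1/2} \le \sum_{i,j} \E^\times(S_i,S_j)^{1/2} \ll (\#\text{pieces})^2 \cdot \max_{i,j}\E^\times(S_i,S_j)^{1/2} \lesssim (\#\text{pieces})^2 \max_j \E^\times(S_j)^{1/2}$ after a Cauchy--Schwarz to pass from $\E^\times(S_i,S_j)$ to $\max\{\E^\times(S_i),\E^\times(S_j)\}$. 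Since each $S_j$ satisfies $\E^\times(S_j)^3 \lesssim |S_j|^3|S|^3 \d^{-2}\eps^{-8}$ — wait, more precisely we keep the conclusion of Proposition \ref{p:E*+} applied to the residual sets, which gives $\E^+(S_j)^2 \E^\times(S_j)^3 \lesssim |S_j|^{11}|S|^3$ — I combine these: $\E^+(S')$ is also controlled by the pieces via the additive triangle inequality (Lemma \ref{l:norm_E+} directly), so $\E^+(S') \lesssim (\#\text{pieces})^2 \max_j \E^+(S_j)$. Plugging $|S_j| \sim |S_1| \sim |S'|/(\#\text{pieces})$ and $\#\text{pieces} \lesssim |S|/|S_1|$ into the product $\E^+(S_j)^2\E^\times(S_j)^3 \lesssim |S_j|^{11}|S|^3$ and bookkeeping the powers of the number of pieces, the factors $|S_j|^{11}$ combine with $(\#\text{pieces})$ to the power $2\cdot 2 + 3\cdot 2 = 10$ roughly, and one checks the exponents land on $\E^+(S')^2\E^\times(S)^3 \lesssim |S|^{14}$ with $|S'|^3 \gtrsim \E^\times(S)$; here one uses $\E^\times(S) \gtrsim (\#\text{pieces})\cdot\E^\times(S_1) \gtrsim |S_1|^3 \cdot (\text{stuff})$ together with $|S'| \gtrsim |S_1|\cdot(\#\text{pieces})$.

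The main obstacle I anticipate is the bookkeeping of the logarithmic ($\lesssim$) factors and, more substantially, verifying that the iteration genuinely terminates in $O(\log|S|)$ steps with comparably-sized pieces rather than degenerating — i.e.\ that after dyadic pigeonholing the chosen subfamily both (a) captures a $\gtrsim 1$ (up to logs) fraction of $\E^\times(S)$ and (b) has all pieces of size $\sim |S_1|$, so that the two exponent constraints $|S'|^3 \gtrsim \E^\times(S)$ and $\E^+(S')^2\E^\times(S)^3 \lesssim |S|^{14}$ hold simultaneously. The numerology is delicate because Proposition \ref{p:E*+} only guarantees $|S_1|^2 \gtrsim \E^\times(S)/|S|$, and one needs the stronger-looking $|S'|^3 \gtrsim \E^\times(S)$ for the union; this forces the count of pieces to be essentially $|S|/|S_1| \sim |S|^2 \E^\times(S)^{-1}$ up to logs, and one must check the hypothesis $|S|^6 \lesssim p^2\E^\times(S)$ stays valid (or its failure becomes harmless) at each stage of the peeling. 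Once the combinatorics of the decomposition is pinned down, the rest is the two triangle inequalities plus substitution.
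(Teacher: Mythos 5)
Your plan is in the right family (iterate Proposition \ref{p:E*+} on residual sets and reassemble with the $\E^{1/4}$ triangle inequality, as in Corollary 22 of \cite{KS2}, which is exactly the paper's route), but there is a genuine gap in how you secure the two conclusions simultaneously. The paper does \emph{not} peel $S$ down to a negligible remainder and then try to recover the energy from the pieces; it stops the algorithm \emph{early}, as soon as the accumulated union $V_j$ of peeled pieces satisfies $|V_j| > (\E^{\times}(S))^{1/3}/2$, and takes $S'=V_j$. This one stopping rule does all the work: while the algorithm runs one has $\E^{\times}(V_j)\le |V_j|^3 \le \E^{\times}(S)/8$, hence (by the energy norm property) $\E^{\times}(U_j)\gg \E^{\times}(S)$, so the hypothesis $|U_j|^6\lesssim p^2\E^{\times}(U_j)$ persists \emph{and} each peeled piece $Y_j$ satisfies $\E^{+}(Y_j)\lesssim |Y_j|^{11/2}|S|^{3/2}\E^{\times}(S)^{-3/2}$ with the crucial factor $\E^{\times}(S)^{-3/2}$ rather than $\E^{\times}(U_j)^{-3/2}$; and at termination $|S'|^3\gtrsim\E^{\times}(S)$ is immediate from the stopping condition. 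Your version, which continues to exhaustion, loses both points: once the removed part has accumulated energy comparable to $\E^{\times}(S)$, the residual's multiplicative energy may drop and the per-piece bound degrades; and the leftover set where the hypothesis of Proposition \ref{p:E*+} fails is only known to satisfy $\E^{\times}\lesssim |S|^6/p^2\lesssim\E^{\times}(S)$ --- comparable to the whole energy, not negligible --- so your pieces are not guaranteed to capture a definite fraction of $\E^{\times}(S)$, and the lower bound $|S'|^3\gtrsim\E^{\times}(S)$ is not established. Even granting that the pieces capture the energy, your ``(number of pieces)$^4$ times max'' estimate only yields $|S'|^3\gtrsim \E^{\times}(S)/(\#\,\mathrm{pieces})$, and the number of pieces can be a power of $|S|$, not a logarithm (your claim of $O(\log|S|)$ pieces is unfounded, since each $Y_j$ is only guaranteed to have size about $(\E^{\times}(S)/|S|)^{1/2}$).

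Two smaller points: the dyadic pigeonholing onto comparably sized pieces is unnecessary --- the paper simply sums fourth roots, $\E^{+}(S')^{1/4}\le\sum_j\E^{+}(Y_j)^{1/4}$, and uses the superadditivity $\sum_j|Y_j|^{11/8}\le|S|^{11/8}$ (valid since $\sum_j|Y_j|\le|S|$), which handles arbitrary piece sizes; and your factor $(\#\,\mathrm{pieces})^2$ in front of $\max_j\E^{+}(S_j)$ should be $(\#\,\mathrm{pieces})^4$, since the triangle inequality for energies is at the level of fourth roots. With the early-stopping criterion inserted, the rest of your outline collapses to the paper's argument.
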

\begin{proof}
	Our arguments is a sort of an algorithm.
We construct a decreasing sequence of sets $U_1=S \supseteq U_2 \supseteq \dots \supseteq U_k$ and an increasing sequence of sets $V_0 = \emptyset \subseteq V_1 \subseteq \dots \subseteq V_{k-1} \subseteq S$ such that
    for any $j=1,2,\dots, k$ the sets $U_j$ and $V_{j-1}$  are disjoint and moreover $S = U_j \sqcup V_{j-1}$.
    If at some step $j$ we have $|V_j| > (\E^\times (S))^{1/3} / 2$, then we stop our algorithm putting
    $S'=V_j$
    and $k=j-1$.
    In the opposite situation we have $|V_j| \le (\E^\times (S))^{1/3} / 2$.
    Applying Proposition \ref{p:E*+} to the set $U_j$,
    we find
    the subset $Y_j$ of $U_j$ such that $|Y_j|^2 \gtrsim \E^\times (U_j) / |U_j|$ and  such that
$$
    \E^{+} (Y_j)^2 \E^{\times} (U_j)^3 \lesssim |Y_j|^{11} |U_j|^3 \,,
$$
	provided  $|U_j|^6 \lesssim p^2 \E^\times (U_j)$.
	Now notice that the inequality $|V_j| \le (\E^\times (S))^{1/3} / 2$ implies that
    $\E^\times (V_j) \le |V_j|^3 \le \E^\times (S) /8$
	and hence $\E^\times (U_j) \gg \E^\times (S)$.
	In particular, $|Y_j|^2 \gtrsim \E^\times (S) / |S|$ and,
	further
$$
	|U_j|^6 \le |S|^6 \lesssim p^2 \E^\times (S) \ll p^2 \E^\times (U_j) \,,
$$
	and thus the condition  $|U_j|^6 \lesssim p^2 \E^\times (U_j)$ takes place.
    Hence
$$
	    \E^{+} (Y_j)  \lesssim |Y_j|^{11/2} |S|^{3/2} \E^{\times} (S)^{-3/2} \,.
$$
	After that we put $U_{j+1} = U_j \setminus Y_j$, $V_j = V_{j-1} \sqcup Y_j$ and
	repeat the procedure.
    Clearly, for each number $k$, we have $V_k = \bigsqcup_{j=1}^k Y_j$ and
	it is easy to see that our algorithm must stop at some step $k$.
	Put $S'=V_{k-1}$.	
    It is known that $\E^{1/4} (\cdot)$ is a norm, see, e.g., \cite{Tao_Vu_book} or \cite{KS2}, say
    (this fact can be considered as an analog of Lemma \ref{l:norm_E+} with $A=\F_p$ as well),
    whence
	$$
		\E^+ (S') \le \left( \sum_{j=1}^{k-1} \E^{+} (Y_j)^{1/4} \right)^4
			\lesssim
				 |S|^{3/2} \E^{\times} (S)^{-3/2} \left( \sum_{j=1}^{k-1} |Y_j|^{11/8} \right)^4
				 	\le
	$$
	$$
				 	\le
				 		|S|^{3/2} \E^{\times} (S)^{-3/2} |S|^{11/2}
				 			=
				 				|S|^7 \E^{\times} (S)^{-3/2}
	$$
as required.
$\hfill\Box$
\end{proof}

\bigskip

Using the
corollary
above we can prove that any subset of the spectrum has large subset with small multiplicative energy.

\begin{theorem}
    Let $A \subset \F_p$ be a set,  and $\eps \in (0,1]$ be a real number.
    Then for any $B:=\Spec_\eps (A)$, $|B| < \d^{1/2} \eps^2 p$
    there is $B' \subseteq B$ such that $|B'|^3 \gtrsim \d \eps^4 |B|^4$ and
\begin{equation}\label{f:energy*spec_new}
    \E^{\times} (B')  \lesssim |B| \cdot \d^{-3/2} \eps^{-6} \,.
\end{equation}
\label{t:energy*spec_new}
\end{theorem}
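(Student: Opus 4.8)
The plan is to feed the set $B=\Spec_\eps(A)$ itself into Corollary \ref{c:E^*+_new} (in the form with the roles of $+$ and $\times$ interchanged), in much the same way Theorem \ref{t:energy*spec} used Proposition \ref{p:E*+}; the advantage is that the corollary returns a \emph{large} subset, with $|B'|^3\gtrsim \E^{+}(B)$, rather than merely a dense one, which is what makes the first conclusion of the theorem meaningful.

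First I would verify the hypothesis of the swapped Corollary \ref{c:E^*+_new}, namely $|B|^6\lesssim p^2\E^{+}(B)$. Since $\E^{+}(B)=\T^+_2(B)$ and $B\subseteq\Spec_\eps(A)$, Lemma \ref{l:les} with $k=2$ gives $\E^{+}(B)\ge\d\eps^4|B|^4$, hence
$$
p^2\E^{+}(B) \ge p^2\d\eps^4|B|^4 \gg |B|^6
$$
exactly when $|B|<\d^{1/2}\eps^2 p$, which is our standing assumption. Thus the corollary applies and produces $B'\subseteq B$ with $|B'|^3\gtrsim\E^{+}(B)$ and $\E^{\times}(B')^2\,\E^{+}(B)^3\lesssim|B|^{14}$.

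It then remains to substitute the spectral lower bound $\E^{+}(B)\ge\d\eps^4|B|^4$ into both conclusions. The first one becomes $|B'|^3\gtrsim\d\eps^4|B|^4$ immediately. Inserting the same bound into the denominator of the second,
$$
\E^{\times}(B')^2 \lesssim \frac{|B|^{14}}{\E^{+}(B)^3} \le \frac{|B|^{14}}{(\d\eps^4|B|^4)^3} = |B|^2\,\d^{-3}\eps^{-12}\,,
$$
so that $\E^{\times}(B')\lesssim|B|\,\d^{-3/2}\eps^{-6}$, which is the asserted estimate.

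There is no genuine obstacle here: the whole argument is a two‑line consequence of Corollary \ref{c:E^*+_new} together with Lemma \ref{l:les}. The only point requiring attention is matching the numerology — the threshold $|B|<\d^{1/2}\eps^2 p$ is precisely what converts the spectral bound $\E^{+}(B)\ge\d\eps^4|B|^4$ into the hypothesis $|B|^6\lesssim p^2\E^{+}(B)$ of the corollary — while all the real work (the iterative extraction of a large low‑energy subset exploiting that $\E^{1/4}$ is a norm, and the additive‑energy lower bound for subsets of the spectrum) has already been done in the invoked results.
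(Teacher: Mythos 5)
Your proposal is correct and follows essentially the same route as the paper: apply Corollary \ref{c:E^*+_new} with the roles of $+$ and $\times$ interchanged, verify its hypothesis $|B|^6\lesssim p^2\E^{+}(B)$ via the spectral bound $\E^{+}(B)\ge\d\eps^4|B|^4$ from Lemma \ref{l:les} (which is exactly where the assumption $|B|<\d^{1/2}\eps^2 p$ enters), and substitute that bound into both conclusions. The arithmetic and the role of each ingredient match the paper's argument.
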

\begin{proof}
    Suppose that $|B|^6 \lesssim p^2 \E^{+} (B)$.
    Then we use Corollary \ref{c:E^*+_new}, reversing $+$ onto $\times$.
    Thus, there is $B'\subseteq B$, $|B'|^3 \gtrsim \E^{+} (B)$ and
\begin{equation}\label{tmp:11.08.2016_1}
    \E^\times (B')^2 \E^{+} (B)^3 \lesssim |B|^{14} \,.
\end{equation}
    Applying Lemma \ref{l:les} with $k=2$, we see that $\E^{+} (B) \ge \d \eps^4 |B|^4$.
    It gives us, firstly, $|B'|^3 \gtrsim \eps \d^4 |B|^4$ and, secondly, from (\ref{tmp:11.08.2016_1}), it follows that
$$
    \E^\times (B')^2 \d^3 \eps^{12} |B|^{12} \lesssim |B|^{14}
$$
as required.
    To check inequality $|B|^6 \lesssim p^2 \E^{+} (B)$, we recall that $\E^{+} (B) \ge \d \eps^4  |B|^4$.
    This completes the proof.
$\hfill\Box$
\end{proof}


\begin{example}
    Let $\eps \gg 1$, $B = \Spec_\eps (A)$, $|A| \gg p^{1/3}$ and the size of $B$ is comparable with upper bound (\ref{f:spec_Par}),
    namely, $|B| \gg \d^{-1}$.
    Then by Theorem \ref{t:energy*spec_new} we find a set $B'\subseteq B$ such that $\E^{\times} (B') \lesssim |B'|^{5/2}$
    and $|B'| \gtrsim |B|$.
\label{ex:5/2_2}
\end{example}


Theorem \ref{t:energy*spec_new} immediately implies

\begin{corollary}
    Let $A \subset \F_p$ be a set,  and $\eps \in (0,1]$ be a real number.
    Then for any $B:=\Spec_\eps (A)$, $|B| < \d^{1/2} \eps^2 p$
    there is $\tilde{B} \subseteq B$ such that $|\tilde{B}| \ge |B|/2$ and
$$
    \E^{\times} (\tilde{B})  \lesssim \d^{-17/6} \eps^{-34/3} |B|^{-1/3}
        \le
            \d^{-5/2} \eps^{-32/3} \,.
$$
\label{c:energy*spec_new}
\end{corollary}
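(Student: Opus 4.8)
The plan is to iterate Theorem \ref{t:energy*spec_new}, peeling off low--energy pieces of $B$ until more than half of it is exhausted, and then to glue the pieces back together using the fact that $\E^{\times}(\cdot)^{1/4}$ is a norm (the multiplicative analogue of Lemma \ref{l:norm_E+} taken with $A=\F_p$). The point that makes the iteration legitimate is that the proof of Theorem \ref{t:energy*spec_new} uses only that the set under consideration is a subset of $\Spec_\eps (A)$ of size less than $\d^{1/2}\eps^2 p$ (through Lemma \ref{l:les} and Corollary \ref{c:E^*+_new}), and hence the very same conclusion is available for every subset of $B$.

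Concretely, I would build disjoint sets $B'_1, B'_2,\dots \subseteq B$ recursively. Given $B'_1,\dots,B'_{j-1}$, put $B_j := B\setminus (B'_1 \sqcup \dots \sqcup B'_{j-1})$; if $|B_j| \le |B|/2$ we stop, otherwise we run the argument of Theorem \ref{t:energy*spec_new} for the set $B_j\subseteq \Spec_\eps (A)$ (admissible since $|B_j|\le |B| < \d^{1/2}\eps^2 p$) and obtain $B'_j \subseteq B_j$ with $|B'_j|^3 \gtrsim \d \eps^4 |B_j|^4 \gtrsim \d\eps^4 |B|^4$ and $\E^{\times}(B'_j) \lesssim |B_j| \d^{-3/2}\eps^{-6} \le |B|\d^{-3/2}\eps^{-6}$. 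As long as the process has not stopped we have $|B_j| > |B|/2$, so each step removes $\gtrsim \d^{1/3}\eps^{4/3}|B|^{4/3}$ elements of $B$; consequently it terminates after some $k \lesssim \d^{-1/3}\eps^{-4/3}|B|^{-1/3}$ steps, and $\tilde B := B'_1 \sqcup \dots \sqcup B'_k$ satisfies $|\tilde B| \ge |B|/2$.

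It then remains to estimate $\E^{\times}(\tilde B)$. By the triangle inequality for $\E^{\times}(\cdot)^{1/4}$ applied to the disjoint sets $B'_j$ we get
$$
    \E^{\times}(\tilde B)^{1/4} \le \sum_{j=1}^{k} \E^{\times}(B'_j)^{1/4} \lesssim k \cdot (|B|\d^{-3/2}\eps^{-6})^{1/4} \,,
$$
whence $\E^{\times}(\tilde B) \lesssim k^4 |B|\d^{-3/2}\eps^{-6} \lesssim \d^{-4/3}\eps^{-16/3}|B|^{-4/3}\cdot |B|\d^{-3/2}\eps^{-6} = \d^{-17/6}\eps^{-34/3}|B|^{-1/3}$, which is the asserted bound; the cruder estimate $\E^{\times}(\tilde B) \lesssim \d^{-5/2}\eps^{-32/3}$ then follows on invoking the Parseval bound (\ref{f:spec_Par}) for $|B|$.

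The argument uses no idea beyond Theorem \ref{t:energy*spec_new} and this norm inequality, so the only real work is bookkeeping: one must check that the hypothesis of Theorem \ref{t:energy*spec_new} is preserved along the shrinking chain $B = B_1 \supseteq B_2 \supseteq \dots$, and that the logarithmic factors hidden in $\lesssim$ — which enter both through the bound on the number of steps $k$ and through the $k$--fold sum above — do not accumulate badly; this is clear since $k$ is at most a fixed power of $|A|$ and all the losses remain of the form $\log^{O(1)}|A|$.
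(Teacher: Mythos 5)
Your proposal is correct and follows essentially the same route as the paper: iterate Theorem \ref{t:energy*spec_new} on the remaining set $B\setminus(B'_1\sqcup\dots\sqcup B'_{j-1})$ (legitimate since every such subset of $\Spec_\eps(A)$ still satisfies the size hypothesis), bound the number of steps by $k \lesssim \d^{-1/3}\eps^{-4/3}|B|^{-1/3}$ using $|B'_j|^3 \gtrsim \d\eps^4|B|^4$, and glue via the norm property of $\E^{\times}(\cdot)^{1/4}$ to get $\E^{\times}(\tilde B) \le k^4|B|\d^{-3/2}\eps^{-6}$. Even your final appeal to the Parseval bound for the cruder estimate mirrors the paper's own concluding step, so there is nothing to add.
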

\begin{proof}
    Applying Theorem \ref{t:energy*spec_new} to the set $B$, we find  $B'_1 := B' \subseteq B$ such that (\ref{f:energy*spec_new}) holds and
    $|B'_1|^3 \gtrsim \d \eps^4 |B|^4$.
    Consider $B\setminus B'_1$. If $|B\setminus B'_1| < |B|/2$,  then we are done.
    If not, then apply the same arguments to this set.
    An so on.
    At the end we have constructed a sequence of disjoint subsets of $B$, namely, $B'_1, \dots, B'_k$ such that the set $\tilde{B} := \bigsqcup_{j=1}^k B'_j$
    has size at least $|B|/2$.
    Clearly, $k \lesssim \d^{-1/3} \eps^{-4/3} |B|^{-1/3}$.
    Because $\E^{1/4} (\cdot)$ is a norm, we obtain in view of (\ref{f:energy*spec_new})
    and the Parseval identity that
$$
    \E^\times (\tilde{B}) \le k^4 |B| \d^{-3/2} \eps^{-6} = \d^{-17/6} \eps^{-34/3} |B|^{-1/3}
        \le
            \d^{-5/2} \eps^{-32/3} \,.
$$
    This completes the proof.
$\hfill\Box$
\end{proof}

\bigskip

As in Example \ref{ex:5/2_2} if $\eps \gg 1$, $B = \Spec_\eps (A)$, $|A| \gg p^{1/3}$ and  $|B| \gg \d^{-1}$,
then  we find a set $\tilde{B}\subseteq B$ such that $\E^{\times} (\tilde{B}) \lesssim |\tilde{B}|^{5/2}$ and $|\tilde{B}| \ge |B|/2$.

\section{The proof of the main result}
\label{sec:proof}

Using the results of the previous two parts of our paper,
we are ready to formulate the main technical proposition of this section.

\begin{proposition}
    Let $A \subset \F_p$ be a set, $\d = |A|/p$, and $\eps \in (0,1]$ be a real number.
    Then for an arbitrary  $B\subseteq \Spec_\eps (A)$, $|B| < \d^{-1/6} \eps^{-2/3} \sqrt{p}$ and any sets $C,D \subseteq \F_p$, one has
\begin{equation}\label{f:technical_mc1}
    \sum_{x\in D} (B \otimes C) (x)
        \lesssim
            \d^{-1/6} \eps^{-2/3} |B|^{1/2}
                   \min\{
                   |D|^{1/2} (\E^\times (C))^{1/4} ,
                   |C|^{1/2} (\E^\times (D))^{1/4}
                \} \,.
\end{equation}
Further suppose that $|C|\le |D|$ and
\begin{equation}\label{cond:technical_mc}
	|B| \le \d \eps^4 |C|^3 |D|^2 \le |B|^9
\end{equation}
    as well as
\begin{equation}\label{cond:technical_mc_add}
     \d^{-1/4} \eps^{-1} |B|^{9/4} |D|^{-1/2} |C|^{5/4} < p^2 \,.
\end{equation}
	Then
\begin{equation}\label{f:technical_mc2}
    \sum_{x\in D} (B \otimes C) (x)
        \lesssim
            \d^{-3/16} \eps^{-3/4} |B|^{11/16} |D|^{1/8} |C|^{15/16} \,.
\end{equation}
Finally, assuming $|C| \le |D| < p^{2/3}$, $|B| < \min\{ \d^{-1/6} \eps^{-2/3} \sqrt{p}, p^{2/3} \}$ and
\begin{equation}\label{cond:technical_mc_add'}
 |C|^{20} \le |D|^{45} \d^{15} \eps^{60} |B|^{24} \,,
\end{equation}
we get
\begin{equation}\label{f:technical_mc3}
    \sum_{x\in C} (B \otimes D)^2 (x)
        \lesssim
        	\min\{ \d^{-1/3} \eps^{-4/3} |B| (\E^{\times} (D))^{1/2},  \d^{-4/11} \eps^{-16/11} |C|^{9/11} |B|^{29/22} |D|^{9/22} \} \,.
\end{equation}
\label{p:technical_mc}
\end{proposition}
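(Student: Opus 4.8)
The plan is to prove the three displays one at a time. The estimate \ref{f:technical_mc1} (and the first term of \ref{f:technical_mc3}) is a soft consequence of the multiplicative‑energy bound for $B$ already established, whereas \ref{f:technical_mc2} and the second term of \ref{f:technical_mc3} require feeding the additive richness of $\Spec_\eps(A)$ into the incidence estimates of Section~\ref{sec:preliminaries}.

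For \ref{f:technical_mc1}, note that $\sum_{x\in D}(B\otimes C)(x)=|\{(b,c,d)\in B\times C\times D:\ bc=d\}|$. Splitting this count according to the value of $c$ and applying Cauchy--Schwarz in $c$ gives $\sum_{x\in D}(B\otimes C)(x)\le|C|^{1/2}\,\E^{\times}(B,D)^{1/2}$, and splitting it according to $d$ gives $\le|D|^{1/2}\,\E^{\times}(B,C)^{1/2}$. Now use submultiplicativity of multiplicative energy, $\E^{\times}(B,Z)\le\E^{\times}(B)^{1/2}\E^{\times}(Z)^{1/2}$, together with Theorem~\ref{t:energy*spec}, which applies precisely because $|B|<\d^{-1/6}\eps^{-2/3}\sqrt p$ and yields $\E^{\times}(B)\lesssim|B|^{2}\d^{-2/3}\eps^{-8/3}$, hence $\E^{\times}(B)^{1/4}\lesssim|B|^{1/2}\d^{-1/6}\eps^{-2/3}$; this turns the two bounds above into the two entries of the minimum in \ref{f:technical_mc1}. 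The first term of \ref{f:technical_mc3} is obtained in the same way: $\sum_{x\in C}(B\otimes D)^{2}(x)\le\E^{\times}(B,D)\le\E^{\times}(B)^{1/2}\E^{\times}(D)^{1/2}\lesssim|B|\,\d^{-1/3}\eps^{-4/3}\,\E^{\times}(D)^{1/2}$.

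For the sharper estimates I would exploit that $B$ carries additive structure. By Lemma~\ref{l:les} with $k=2$ one has $\E^{+}(B)\ge\eps^{4}|B|^{4}\d$; a dyadic pigeonholing in $\E^{+}(B)=\sum_{x}(B\c B)^{2}(x)$ produces a symmetric set $P\sbeq B-B$ and a level $\theta$ with $(B\c B)(x)$ of order $\theta$ on $P$, and with $|P|\theta^{2}\gtrsim\eps^{4}|B|^{4}\d$, $|P|\theta\le|B|^{2}$. Decomposing $B$ into level sets $B_{j}=\{b\in B:\ (B*P)(b)\sim 2^{j}\}$ one has $\s^{+}_{P}(B)=\sum_{j}2^{j}|B_{j}|\sim|P|\theta$ and $B_{j}\sbeq\Sym^{+}_{2^{j}}(B,P)$, so that Lemma~\ref{l:D_Fp_pre'}/Corollary~\ref{c:D_Fp_pre'} (for \ref{f:technical_mc2}) and Corollary~\ref{c:D_Fp_pre} (for the second term of \ref{f:technical_mc3}, as announced right after the Proposition) apply with data $(A,P,Q,t)\mapsto(B_{j},B,P,2^{j})$; alternatively the crude level‑set decomposition can be replaced by a one‑piece extraction via Lemma~\ref{l:Misha_c}, which guarantees $q\lesssim|B_{*}|$ — exactly what makes the side condition $|P|^{2}|Q|^{5/4}\ge t^{2}|A|^{3/4}$ of Corollary~\ref{c:D_Fp_pre} checkable. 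For \ref{f:technical_mc2} I write $\sum_{x\in D}(B\otimes C)(x)=\sum_{d\in D}|B\cap dC^{-1}|$ (where $C^{-1}=\{c^{-1}:c\in C\}$), decompose $D$ dyadically according to the size of $|B\cap dC^{-1}|$, bound $|\{d\in D:\ |B\cap dC^{-1}|\ge\tau\}|$ by $\min\{|D|,\ |\{s:\ |B\cap sC^{-1}|\ge\tau\}|\}$, apply Lemma~\ref{l:D_Fp_pre'} (with its ``$A$''$=B_j$, ``$B$''$=C^{-1}$) to the second alternative, and sum over $\tau$ and $j$. For the second term of \ref{f:technical_mc3} I observe that $\sum_{x\in C}(B\otimes D)^{2}(x)$ is itself a restricted multiplicative energy of the shape bounded by Corollary~\ref{c:D_Fp_pre} (after replacing $D$ by $D^{-1}$, which changes nothing), applied to each $B_{j}$; the pieces are recombined via Lemma~\ref{l:norm_E+}, the triangle inequality for restricted energies, losing only logarithmic factors, consistently with the $\lesssim$ in the statement.

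The main obstacle is the bookkeeping of the parameter ranges, not any new idea. One must optimise the dyadic level $\theta$ (equivalently the trade‑off between $|P|$, the symmetry threshold, and the size of the extracted part of $B$), then verify the hypotheses of the incidence theorems — $|P|,|Q|<p^{2/3}$, the relevant product of cardinalities $\ll p^{2}$, and the side condition $|P|^{2}|Q|^{5/4}\ge t^{2}|A|^{3/4}$ — and finally check that in each incidence estimate the ``main term'' beats both the linear term and the $|A||B|$‑type term, so that the bound does not degenerate. Substituting $|P|\theta^{2}\sim\eps^{4}|B|^{4}\d$ into these requirements is precisely what produces the hypotheses \ref{cond:technical_mc}, \ref{cond:technical_mc_add} and \ref{cond:technical_mc_add'}: in particular the window $|B|\le\d\eps^{4}|C|^{3}|D|^{2}\le|B|^{9}$ records, on the one hand, that the incidence bound improves on the trivial one and, on the other, that the point set and the line set stay within the admissible range of Theorem~\ref{t:S_Z_inc} (resp.\ Theorem~\ref{t:SzT_Fp}).
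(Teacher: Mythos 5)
Your treatment of (\ref{f:technical_mc1}) and of the first bound in (\ref{f:technical_mc3}) is correct and is exactly the paper's argument (Cauchy--Schwarz plus Theorem \ref{t:energy*spec}). The problem is in your plan for (\ref{f:technical_mc2}) and for the second bound in (\ref{f:technical_mc3}). You pigeonhole a single popular-difference set $P\subseteq B-B$ from $\E^{+}(B)\ge \d\eps^4|B|^4$ and then decompose $B$ into level sets $B_j=\{b:\,(B*P)(b)\sim 2^j\}$, feeding each $B_j\subseteq \Sym^{+}_{2^j}(B,P)$ into Lemma \ref{l:D_Fp_pre'} / Corollary \ref{c:D_Fp_pre}. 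This does not close: the incidence bounds scale like $2^{-2j}$ (resp.\ $q^{-2}$), so the low levels are not controlled by them, and the only alternative bound for a low level, $|B_j|\min\{|C|,|D|\}$, can be as large as the trivial bound $|B||C|$, since the constraint $\sum_j 2^j|B_j|\sim|P|\theta$ permits almost all of $B$ to sit at a tiny level. The one-piece extraction via Lemma \ref{l:Misha_c} that you offer as an alternative has the same defect: it only covers one chunk $B_*$ of $B$ and says nothing about $B\setminus B_*$. What is genuinely needed (and what the paper does) is an iteration: repeatedly extract $Y_j\subseteq U_j$ with $|Y_j|\gtrsim |B|/M$ (so at most $\lesssim M$ steps), re-pigeonholing $P_j$ inside the current set $U_j$, and stop when the residual $U$ satisfies $\E^{+}(U)\le|B|^3/M$; then — and this is the key use of the spectrum hypothesis, not the one you make — Lemma \ref{l:les} applied to $U\subseteq\Spec_\eps(A)$ gives $|U|\le \d^{-1/4}\eps^{-1}M^{-1/4}|B|^{3/4}$ (formula (\ref{f:U_bound})), so the uncaptured part contributes at most $|U|\,|C|$. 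Balancing this against the sum--product contribution $M^{3/4}|D|^{1/2}|C|^{3/4}|B|^{1/2}$ and optimizing $M$ is precisely what produces the exponents $\d^{-3/16}\eps^{-3/4}|B|^{11/16}|D|^{1/8}|C|^{15/16}$; your sketch has no such two-regime balance and no parameter $M$, so it cannot reproduce the stated bounds, and the claim that substituting $|P|\theta^2\sim\d\eps^4|B|^4$ "produces the hypotheses" is not substantiated.

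Two smaller points. First, for (\ref{f:technical_mc2}) the paper does not use the incidence lemmas at all but Theorem \ref{t:RRS} applied to $\E^{\times}(Y_j,C)\le q_j^{-2}|\{(u+p)c=(u'+p')c'\}|$; condition (\ref{cond:technical_mc}) arises from dominating the secondary terms $|U_j||P_j||C|\max\{|U_j|,|P_j|,|C|\}$ there, and (\ref{cond:technical_mc_add}) is the $|U_j||P_j||C|\ll p^2$ requirement — not, as you say, admissibility ranges of Theorem \ref{t:S_Z_inc}. Your substitution of Lemma \ref{l:D_Fp_pre'} could in principle be made to work (the paper itself remarks that Corollary \ref{c:D_Fp_pre'} would give a slightly better third part), but only inside the iterative framework above, and the exponents would have to be recomputed. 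Second, for the second bound in (\ref{f:technical_mc3}) you correctly name Corollary \ref{c:D_Fp_pre} and Lemma \ref{l:norm_E+}, but the side condition $|P_j|^2|U_j|^{5/4}\ge q_j^2|Y_j|^{3/4}$ is verified in the paper using the relations $q_j|Y_j|\sim t_j|P_j|$, $t_j^2|P_j|\sim\E^{+}(U_j)>|B|^3/M$ produced by the algorithm, and it is exactly this verification that turns into the constraint $M\lesssim|B|^{4/5}$, i.e.\ hypothesis (\ref{cond:technical_mc_add'}); without the algorithm you have no route to that condition either.
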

\begin{proof}
Let $\sigma = \sum_{x\in D} (B \otimes C) (x)$.
Using the Cauchy--Schwarz inequality twice, combining with Theorem \ref{t:energy*spec}, we get
$$
    \sigma^4 \le |D|^2 \E^{\times} (B) \E^\times (C)
        \lesssim
        |D|^2 \E^\times (C) |B|^2 \cdot \d^{-2/3} \eps^{-8/3}
$$
and bound (\ref{f:technical_mc1}) has proved.

Now let us prove (\ref{f:technical_mc2}).
First of all, notice that, trivially,  $\sigma \le |B| |C| $ and hence we can suppose
$$
 	|B| |C| >
 		\d^{-3/16} \eps^{-3/4} |B|^{11/16} |D|^{1/8} |C|^{15/16}
$$
or, in other words,
\begin{equation}\label{tmp:trivial_bc}
	|B|^5 |C| \eps^{12} \d^3 > |D|^2 \,.
\end{equation}
Finally, because of $B \subseteq \Spec_\eps (A)$ we have in view of (\ref{f:spec_Par})
\begin{equation}\label{tmp:Par_B}
	|B| \le \d^{-1} \eps^{-2} \,.
\end{equation}
Now let $M\ge 1$ be a parameter which we will choose later.
Our arguments is a sort of an algorithm.
We construct a decreasing sequence of sets $U_1=B \supseteq U_2 \supseteq \dots \supseteq U_k$ and an increasing sequence of sets $V_0 = \emptyset \subseteq V_1 \subseteq \dots \subseteq V_{k-1} \subseteq B$ such that
    for any $j=1,2,\dots, k$ the sets $U_j$ and $V_{j-1}$  are disjoint and moreover $B = U_j \sqcup V_{j-1}$.
    If at some step $j$ we have $\E^{+} (U_j) \le |B|^3 / M$, then we stop our algorithm putting
    $U=U_j$, $V = V_{j-1}$, and $k=j-1$.
    In the opposite situation we have $\E^{+} (U_j) > |B|^3 / M$.
    Using the pigeonhole principle we find a set $P_j \subseteq U_j-U_j$
    such that $\E^{+}_{P_j} (U_j) \sim \E^{+} (U_j)$ and a number $t=t_j$ with
    $t< (U_j \c U_j) (x) \le 2t$ for all $x\in P_j$.
    Applying Lemma \ref{l:Misha_c} to the sets $U_j$, $P_j$,
    we get
    the subset $Y_j$ of $U_j$ such that $|Y_j| \gtrsim |B|/M^{}$ and a number $q_j \lesssim |Y_j|$
    such that for any $x\in Y_j$ one has $(U_j * P_j)(x) \ge q_j$ and  $\sigma_{P_j} (U_j) \sim |Y_j| q_j$.
    After that we put $U_{j+1} = U_j \setminus Y_j$, $V_j = V_{j-1} \sqcup Y_j$ and repeat the procedure.
    Clearly, $V_k = \bigsqcup_{j=1}^k Y_j$ and because of $|Y_j| \gtrsim |B|/M^{}$, we have $k\lesssim M^{}$,
    so the number of steps is finite.

    Consider $\sigma_j = \sum_{x\in D} (Y_j \otimes C) (x)$.
    By the Cauchy--Schwarz inequality and the fact that $(U_j * P_j)(x) \ge q_j$ on $Y_j$, we have
$$
    \sigma^2_j \le |D| \E^{\times} (Y_j,C)
        \le
            q_j^{-2} |D| |\{ (u+p) c = (u'+p') c' ~:~  u,u'\in U_j,\, p,p' \in P_j,\, c,c' \in C \}| \,.
$$
    Applying
    Theorem \ref{t:RRS},
    we get
\begin{equation}\label{f:AMRS_1}
	\sigma^2_j \ll q_j^{-2} |D| ( (|U_j| |P_j| |C| )^{3/2} + |U_j| |P_j| |C| \cdot \max\{ |U_j|, |P_j|, |C| \}) \,,
\end{equation}
    provided
\begin{equation}\label{cond:later}
    |U_j| |P_j| |C| \ll p^2 \,.
\end{equation}
    We will check condition (\ref{cond:later}) later.
	Moreover suppose that the first term in (\ref{f:AMRS_1}) dominates.
 	Then using the fact $q_j |Y_j| \sim t |P_j|$, we obtain
$$
    \sigma^2_j \ll q_j^{-2} |D| ( (|U_j| |P_j| |C| )^{3/2}
        \lesssim |D| |C|^{3/2} |B|^{3/2} |Y_j|^2 t^{-2} |P_j|^{-1/2} \,.
$$
    Now recalling that $q_j \lesssim |Y_j|$ and observing
$$
    t |Y_j|^2 \gtrsim t q_j |Y_j| \sim t \sigma_{P_j} (U_j) \sim t^2 |P_j| \sim \E^{+} (U_j)  \,,
$$
    we get from $t |Y_j|^2 \gtrsim t^2 |P_j|$ and $t |Y_j|^2 \gtrsim \E^{+} (U_j)$
    that $\E^{+} (U_j) / |Y_j|^2 \lesssim t\lesssim |Y_j|^2 / |P_j|$ and
    hence in view of $t^2 |P_j| \sim \E^{+} (U_j)$, we derive
\begin{equation}\label{f:P_j_size}
	|P_j| \lesssim |Y_j|^4 / \E^{+} (U_j)
\end{equation}
and
$$
    \sigma^2_j
        \lesssim |D| |C|^{3/2} |B|^{3/2} |Y_j|^2 \E^{+} (U_j)^{-1} |P_j|^{1/2}
            \lesssim
                |D| |C|^{3/2} |B|^{3/2} |Y_j|^4 \E^{+} (U_j)^{-3/2}
                    \le
$$
$$
                    \le
                        M^{3/2} |D| |C|^{3/2} |Y_j|^4 |B|^{-3} \,.
$$
Thus
\begin{equation}\label{f:split_1}
    \sigma = \sum_{x\in D} (U \otimes C) (x) +  \sum_{x\in D} (V \otimes C) (x)
        \le
            \sum_{x\in D} (U \otimes C) (x) + \sum_{j=1}^k \sigma_j
                \lesssim
\end{equation}
\begin{equation}\label{f:split_1'.5}
                \lesssim
                    \sum_{x\in D} (U \otimes C) (x) + M^{3/4} |D|^{1/2} |C|^{3/4} |B|^{-3/2}\sum_{j=1}^l |Y_j|^2
                        \le
\end{equation}
\begin{equation}\label{f:split_1'}
                        \le
                            \sum_{x\in D} (U \otimes C) (x) + M^{3/4} |D|^{1/2} |C|^{3/4} |B|^{1/2} \,.
\end{equation}
To estimate the first term in the last formula, we remind that $\E^{+} (U) \le |B|^3/M$ and
$U\subseteq B \subseteq \Spec_\eps (A)$.
Using Lemma \ref{l:les}, we see that
$$
    \d \eps^4 |U|^4 \le \E^{+} (U) \le |B|^3/M \,.
$$
Whence
\begin{equation}\label{f:U_bound}
    |U| \le \d^{-1/4} \eps^{-1} M^{-1/4} |B|^{3/4}
\end{equation}
and thus
$$
    \sigma \lesssim \d^{-1/4} \eps^{-1} M^{-1/4} |B|^{3/4} \cdot \min\{ |C|,|D|\} + M^{3/4} |D|^{1/2} |C|^{3/4} |B|^{1/2} \,.
$$
Recall that $m:= \min\{ |C|,|D|\} = |C|$.
The optimal choice of $M$ is
$$
	M= \d^{-1/4} \eps^{-1} |B|^{1/4} |D|^{-1/2} |C|^{-3/4} m = \d^{-1/4} \eps^{-1} |B|^{1/4} |D|^{-1/2} |C|^{1/4}
$$
 and hence
$$
    \sigma \lesssim \d^{-3/16} \eps^{-3/4} m^{3/4} |B|^{11/16} |D|^{1/8} |C|^{3/16}
    	=
    		\d^{-3/16} \eps^{-3/4} |B|^{11/16} |D|^{1/8} |C|^{15/16} \,.
$$
It is easy to see that the inequality $M\ge 1$ is equivalent to
$$
	|B| |C| \ge \d \eps^4 |D|^2
$$
but in view of (\ref{tmp:trivial_bc}) it would follows from
$$
	|B|^4 \le \d^{-4} \eps^{-16} \,.
$$
	The last inequality is a simple consequence of (\ref{tmp:Par_B}).
    Now let us check condition (\ref{cond:later}).
    In view of estimate (\ref{f:P_j_size}) it is sufficient to have
$$
    |U_j| |P_j| |C| \lesssim |Y_j|^4 |U_j| |C| / \E^{+} (U_j) \le M |C| |B|^2
        =
            \d^{-1/4} \eps^{-1} |B|^{9/4} |D|^{-1/2} |C|^{5/4} \ll p^2 \,.
$$
    The last bound is our condition (\ref{cond:technical_mc_add})
     (again we ignore signs $\ll$, $\gg$ increasing the constants in the final inequalities as in the proof of Lemma  \ref{l:D_Fp_pre}).

	It remains to consider the case when the second term in (\ref{f:AMRS_1}) dominates.
    We will show that in this situation one has even better upper bound for $\sigma$.
    Put $\nu_j = \max\{ |U_j|, |P_j|, |C| \})$.
	In view of formulas (\ref{f:AMRS_1}), (\ref{f:split_1})---(\ref{f:split_1'}) and our choice of $q_j$,
	it is sufficient to check
	$$
		|D|^{1/2}  \sum_{j} q_j^{-1} (|U_j| |P_j| |C| \nu_j)^{1/2}
			\lesssim
				|D|^{1/2}  \sum_{j} t_j^{-1} |Y_j| (|U_j| |P_j|^{-1} |C| \nu_j)^{1/2}
					\lesssim M^{3/4} |D|^{1/2} |C|^{3/4} |B|^{1/2} \,.
	$$
If $\nu_j = |P_j|$, then we obtain the inequality to
insure
$$
	\sum_j |Y_j| t_j^{-1} |U_j|^{1/2}   \lesssim M^{3/4} |C|^{1/4} |B|^{1/2} \,.
$$
	Clearly, $|U_j| \le |B|$, $t_j \gtrsim |B|/M$ and $\sum_j |Y_j| \le |B|$. Thus we need to check
$$
	M \le |C| 
$$
or, in other words,
$$
	|B| \le |C|^3 |D|^2 \d \eps^4
$$
and this is the first part of condition (\ref{cond:technical_mc}).
If $\nu_j = U_j$, then  we have the bound
$$
	\sum_j |Y_j| |U_j| (t_j^2 |P_j|)^{-1/2} \lesssim  \sum_j (\E^{+} (B))^{-1/2} |Y_j| |U_j| \le (|B|^3/M)^{-1/2} \sum_j |Y_j| |U_j|
		\le
			M^{1/2} |B|^{1/2} \,.
$$
Clearly, the last quantity is less than $M^{3/4} |C|^{1/4} |B|^{1/2}$.
Finally, if $\nu_j = |C|$, then similarly, we get
$$
	\sum_j |Y_j| |U_j|^{1/2} |C|^{1/2} (t_j^2 |P_j|)^{-1/2}
			\lesssim
			M^{1/2} |C|^{1/2} \,.
$$
To
make this less than
$M^{3/4} |C|^{1/4} |B|^{1/2}$
it is sufficient to have
$$
	|C| \le M |B|^2
$$
or
$$
	|C|^3 |D|^2 \d \eps^4 \le |B|^9 \,.
$$
The last inequality coincides with the second part of conditions (\ref{cond:technical_mc}).
Thus, we have proved the second part of our proposition.

It remains to obtain (\ref{f:technical_mc3}).
The first bound is a trivial consequence of the Cauchy--Schwarz inequality, combining with Theorem  \ref{t:energy*spec}.
Here we simply ignore that the summation is taken over the set $C$.
Notice that we do not use condition (\ref{cond:technical_mc_add'}) as well as $|C| \le |D| < p^{2/3}$, $|B| < p^{2/3}$
to obtain this bound but the assumption  $|B| < \d^{-1/6} \eps^{-2/3} \sqrt{p}$
only.
Let us prove the second estimate, where we need all mentioned assumptions.
In our arguments we apply  the algorithm above and construct the sets $U,V$, $U\sqcup V=B$, in particular.
Using the Cauchy--Schwarz inequality, Lemma \ref{l:norm_E+} and bound (\ref{f:U_bound}), we get
$$
	\sum_{x\in C} (B \otimes D)^2 (x) =  \sum_{x\in C} ((U \otimes D) (x) + (V \otimes C) (x))^2
	\ll
		\sum_{x\in C} (U \otimes D)^2 (x) + \sum_{x\in C} (V \otimes D)^2 (x)
			 \le
$$
$$
    \le
            |U|^2 |C| +
		 			(\sum_{j=1}^k (\E^{\times}_C (Y_j,D))^{1/2} )^2
		 		\lesssim
		 			\d^{-1/2} \eps^{-2} M^{-1/2} |B|^{3/2} |C| + \sigma_* \,.
$$
Here $M$ is a parameter which we will choose later.
Our task is to find a good upper bound for $\sigma_*$.
To estimate the sum $\sigma_*$ we need to bound $\E^{\times}_C (Y_j,D)$
via Corollary \ref{c:D_Fp_pre} with $A=Y_j$, $X=C$, $Y=D$, $P=P_j$, $Q=U_j$ and $t=q_j$.
To apply this corollary we have to
find the condition on the parameter $M$ when
\begin{equation}\label{f:cond_P_Y}
    |P_j|^2 |U_j|^{5/4} \ge q_j^{2} |Y_j|^{3/4} \,.
\end{equation}
Suppose not.
Then by formula $q_j |Y_j| \sim t_j |P_j|$, we obtain
$$
    |P_j|^2 |U_j|^{5/4} \lesssim t_j^2 |Y_j|^{-5/4} |P_j|^2
$$
and
because of $q_j \lesssim |Y_j|$,
we have
$|Y_j|^2 \gtrsim |Y_j| q_j \sim |P_j| t_j$ and hence
$$
    |U_j|^{5/4} (|P_j| t_j)^{5/8} \lesssim |U_j|^{5/4} |Y_j|^{5/4} \lesssim t_j^2 \,.
$$
This implies
$$
    |U_j|^{10} |B|^{15} M^{-5} \lesssim |U_j|^{10} \E^{+} (U_j)^5 \lesssim t_j^{21} \le |U_j|^{21} \,.
$$
Thus (\ref{f:cond_P_Y}) takes place if
\begin{equation}\label{f:M_B}
    M\lesssim |B|^{4/5} \,.
\end{equation}
In this  case the conditions of Corollary \ref{c:D_Fp_pre} take place because $|D| <p^{2/3}$ and $|U_j| \le |B| < p^{2/3}$.
Applying this corollary,
formulas $q_j |Y_j| \sim t |P_j|$, $t_j^2 |P_j| \sim \E^{+} (U_j)$
and inequality (\ref{f:P_j_size}), we obtain
$$
	\sigma_* \le |D|^{3/2} |C|^{1/3} (\sum_{j=1}^k q_j^{-1} |P_j|^{2/3} |U_j|^{3/4} )^2
		\lesssim
				|D|^{3/2} |C|^{1/3} |B|^{3/2} (\sum_{j=1}^k |Y_j| t_j^{-1}  |P_j|^{-1/3})^2
			\lesssim
$$
$$
			\lesssim
				|D|^{3/2} |C|^{1/3} |B|^{3/2} \left(\sum_{j=1}^k |Y_j| |P_j|^{1/6} (\E^{+} (U_j))^{-1/2} \right)^2
					\lesssim
$$
$$
                    \lesssim
						|D|^{3/2} |C|^{1/3} |B|^{3/2} \left(\sum_{j=1}^k |Y_j|^{5/3} (\E^{+} (U_j))^{-2/3} \right)^2
							\le
								M^{4/3} |D|^{3/2} |C|^{1/3} |B|^{5/6} \,.
$$
Thus
$$
	\sum_{x\in C} (B \otimes D)^2 (x)
		\lesssim
		 			\d^{-1/2} \eps^{-2} M^{-1/2} |B|^{3/2} |C|
		 				+
		 					M^{4/3} |D|^{3/2} |C|^{1/3} |B|^{5/6} \,.
$$
The optimal choice of $M$ is
$$
	M = |B|^{4/11} |C|^{4/11} \d^{-3/11} \eps^{-12/11} |D|^{-9/11}
$$
and hence
$$
	\sum_{x\in C} (B \otimes D)^2 (x)
		\lesssim
			\d^{-4/11} \eps^{-16/11} |C|^{9/11} |B|^{29/22} |D|^{9/22}
$$
as required.
It remains to notice that the condition
$M \lesssim |B|^{4/5}$ is equivalent to (\ref{cond:technical_mc_add'}).
This completes the proof.
$\hfill\Box$
\end{proof}

\bigskip

Bound (\ref{f:technical_mc2}) works better than (\ref{f:technical_mc1}) or (\ref{f:technical_mc3})
in the case when
the size of $D$ is large comparable to $B$ and $C$.
For very small $D$ estimate (\ref{f:technical_mc1}) is the best one.


\bigskip

Let us
prove
our main result.

\begin{theorem}
    Let $\mathcal{E}$ be a finite family of equations of form (\ref{f:E_form}).
    Also, let $A \subseteq \F_p$ be a set avoiding the family $\mathcal{E}$ and $|A| \gg p^{\frac{39}{47}}$.
    Then for any
    $\kappa_1 < \frac{10}{31}$,
    one has
\begin{equation}\label{f:main1}
    |A| \ll
        \frac{p}{\mathcal{T}(\mathcal{E})^{\kappa_1}} \,.
\end{equation}
    and
    for an arbitrary
    $\kappa_2 < \frac{3}{10}$
    the following holds
\begin{equation}\label{f:main2}
    |A| \ll
            \frac{p}{\mathcal{T}_* (\mathcal{E})^{\kappa_2}} \,.
\end{equation}
    Finally, let $|A| \gg p^{7/9}$, $\mathcal{T}_* (\mathcal{E}) <p^{2/3}$.
    Then for an arbitrary $\kappa_3 < \frac{35}{159}$ one has
\begin{equation}\label{f:main3}
  |A| \ll \max\left\{ \frac{p}{\mathcal{T}_* (\mathcal{E})^{\kappa_3}} \cdot \left( \frac{\E^{+}_* (A)}{|A|^3} \right)^{\frac{22}{159}},
                 \frac{p}{\mathcal{T}_* (\mathcal{E})^{69/183}} \right\} \,.
\end{equation}
\label{t:main}
\end{theorem}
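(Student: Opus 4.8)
The plan is to dualise: rewrite the avoidance hypothesis as a statement about the large Fourier coefficients of $A$, and then feed the resulting character sums into Proposition~\ref{p:technical_mc}, using that every level set of $\FF{A}$ lies inside a spectrum $\Spec_\eps(A)$. Fix the plane, among $\{x=1\},\{y=1\},\{z=1\}$, on which the relevant quantity ($\mathcal T$ or $\mathcal T_*$) is attained, and divide each equation by the coefficient of that variable; so we may assume that $\mathcal E$ consists of equations $a_j x+b_j y+z=d_j$. For \eqref{f:main1} replace $\mathcal E$ by a subfamily $\mathcal E'$ with $|\mathcal E'|=\mathcal T(\mathcal E)$ in which all $a_j$ are pairwise distinct and all $b_j$ are pairwise distinct (Definition~\ref{def:T}); for \eqref{f:main2} and \eqref{f:main3} take $|\mathcal E'|=\mathcal T_*(\mathcal E)$ as in Definition~\ref{def:T_*} and split $\mathcal E'$, by the pigeonhole principle, into three parts --- one with pairwise distinct $a_j$, one with pairwise distinct $b_j$, one with pairwise distinct ratios $a_j b_j^{-1}$ --- so that it suffices to treat a single part, of size $t\gtrsim\mathcal T_*(\mathcal E)$. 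Since $A$ avoids the $j$-th equation, expanding its (vanishing) number of solutions by the Fourier inversion formula~\eqref{f:inverse} and isolating the contribution of $r=0$ gives
\[
    |A|^3 \le \sum_{r\ne0} |\FF{A}(a_j r)|\,|\FF{A}(b_j r)|\,|\FF{A}(r)| \,,
\]
and summing over the $t$ equations of $\mathcal E'$ yields $t|A|^3 \le \sum_{j}\sum_{r\ne0}|\FF{A}(a_j r)|\,|\FF{A}(b_j r)|\,|\FF{A}(r)|$.

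Next I would localise to a single scale: dyadic pigeonholing in the three factors, followed by a pigeonholing in $j$, produces dyadic parameters $\alpha,\beta,\gamma$ and a subfamily $J$ with $|J|\gtrsim t$ such that
\[
    |R_j|\gtrsim\frac{|A|^3}{\alpha\beta\gamma}\,,\qquad R_j:=\{r\ne0:\ |\FF{A}(r)|\sim\gamma,\ |\FF{A}(a_j r)|\sim\alpha,\ |\FF{A}(b_j r)|\sim\beta\}\,,
\]
for every $j\in J$. Put $S_\gamma=\{r\ne0:|\FF{A}(r)|\ge\gamma\}\subseteq\Spec_{\gamma/|A|}(A)$, and define $S_\alpha,S_\beta$ similarly; then $R_j\subseteq S_\gamma$, and $a_j r\in S_\alpha$, $b_j r\in S_\beta$ whenever $r\in R_j$. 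By Parseval (cf.~\eqref{f:spec_Par}) one has $|S_\gamma|\le p|A|/\gamma^2$, and since $\sum_{r\ne0}|\FF{A}(r)|^4=p\,\E^{+}_*(A)$ also $|S_\gamma|\le p\,\E^{+}_*(A)/\gamma^4$; this second estimate is what brings the factor $(\E^{+}_*(A)/|A|^3)^{22/159}$ into \eqref{f:main3}, and the analogues hold for $S_\alpha,S_\beta$.

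Now sum over $j\in J$. Using that on $\mathcal E'$ the index $j$ is recovered from $a_j$ (respectively from $b_j$, respectively from $a_j b_j^{-1}$), the incidence count $\sum_{j\in J}|R_j|$ is bounded either by a multiplicative convolution $\sum_{x\in D}(B\otimes C)(x)$, where $B$ and $D$ are two of the spectra $S_\alpha,S_\beta,S_\gamma$ and $C$ is the abscissa set $\{a_j\}$, the ordinate set $\{b_j\}$ or the ratio set $\{a_j b_j^{-1}\}$ (of size $\approx t$); or, after one further Cauchy--Schwarz in $j\in J$, by a weighted energy $\sum_{x\in C}(B\otimes D)^2(x)$ with $C$ that same combinatorial set and $B,D$ two of the spectra. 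Proposition~\ref{p:technical_mc} then closes the argument: bound \eqref{f:technical_mc2} applied to $\sum_{x\in D}(B\otimes C)(x)$ gives \eqref{f:main1} and \eqref{f:main2}, while \eqref{f:technical_mc3} applied to $\sum_{x\in C}(B\otimes D)^2(x)$ gives \eqref{f:main3}; throughout, the multiplicative energies of the spectral sets are controlled by Theorem~\ref{t:energy*spec}, their sizes by the Parseval and $\E^{+}_*$ estimates above, and $\E^\times(C)\le|C|^3$ is used for the combinatorial set. Putting $\alpha\beta\gamma|R_j|\gtrsim|A|^3$ against these upper bounds and optimising over the scales $\alpha,\beta,\gamma$ produces the stated inequalities; in particular the restriction $\mathcal T_*(\mathcal E)<p^{2/3}$ in \eqref{f:main3} is precisely the requirement $|C|\le|D|<p^{2/3}$ of \eqref{f:technical_mc3}.

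The hard part, and the origin of the numerology --- the thresholds $|A|\gg p^{39/47}$ and $|A|\gg p^{7/9}$, the exponents $10/31,\ 3/10,\ 35/159$, and the fallback term $p/\mathcal T_*(\mathcal E)^{69/183}$ in \eqref{f:main3} --- is the multi-scale bookkeeping, together with verifying that the side conditions of Proposition~\ref{p:technical_mc} (the requirement $B\subseteq\Spec_\eps(A)$ with $|B|<\d^{-1/6}\eps^{-2/3}\sqrt p$, and \eqref{cond:technical_mc}, \eqref{cond:technical_mc_add}, \eqref{cond:technical_mc_add'}) hold simultaneously across the whole dyadic range that can carry the mass $|A|^3/(\alpha\beta\gamma)$. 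At the scales where the proposition does not apply one must retreat to cruder estimates --- Cauchy--Schwarz with Parseval and Theorem~\ref{t:energy*spec}, or entirely trivial bounds --- and it is exactly how far these borderline scales can be pushed, which is governed by the lower bound on $|A|$, that fixes the admissible $\kappa_1,\kappa_2,\kappa_3$ (their strictness reflecting the logarithmic losses in the pigeonholing).
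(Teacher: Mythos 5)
There is a genuine gap, and it lies in how you feed the counting into Proposition~\ref{p:technical_mc}. The bounds \eqref{f:technical_mc2} and \eqref{f:technical_mc3} are strongly asymmetric in the three sets: in \eqref{f:technical_mc2} the exponents are $|B|^{11/16}|D|^{1/8}|C|^{15/16}$, so the only way to extract a large power of $\mathcal T(\mathcal E)$ or $\mathcal T_*(\mathcal E)$ is to place the coefficient set (the abscissa/ordinate/ratio set of size $\approx t$) in the slot $D$, where it enters with exponent $1/8$ (resp.\ $9/22$ in \eqref{f:technical_mc3}), and to arrange the left--hand side to carry a high power of $t$. The paper does exactly this: after restricting to the spectrum it applies Cauchy--Schwarz and Parseval so that one spectral factor is absorbed as $|A|p$ and the left side becomes $t^2|A|^6$ (first part) or $t_*|A|^5$ with each $j$ already squared (second and third parts), and then invokes Proposition~\ref{p:technical_mc} with $B,C$ spectral pieces and $D=S_A$, $S_*$ or $S_*^{-1}$. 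Your reduction does the opposite: you keep only a single power of $t$ on the left (no Cauchy--Schwarz squaring for parts one and two) and you explicitly put the combinatorial set in the slot $C$ (exponent $15/16$), with two spectra as $B$ and $D$; likewise in the third part you sum over the combinatorial set and convolve two spectra, whereas the paper sums over the spectrum, convolves a spectrum with $S_*^{-1}$, and gets the factor $\E^+_*(A)$ from a Cauchy--Schwarz in $r$ (via $\sum_{r\neq0}|\FF{A}(r)|^4=p\,\E^+_*(A)$), not from a fourth--moment bound on $|S_\gamma|$. With your orientation the net gain in $t$ from \eqref{f:technical_mc2} is only $t^{1-15/16}=t^{1/16}$; running the Parseval bounds $|S_\alpha|\le p|A|/\alpha^2$, $|S_\gamma|\le p|A|/\gamma^2$, $\beta\le|A|$ through your inequality $t|A|^3/(\alpha\beta\gamma)\lesssim\d^{-3/16}\eps^{-3/4}|S_\gamma|^{11/16}|S_\alpha|^{1/8}t^{15/16}$ gives at best $\d\lesssim t^{-1/16}$ (and in the worst dyadic range only $\d\lesssim t^{-1/34}$), nowhere near the claimed $\kappa_1<10/31$, $\kappa_2<3/10$, $\kappa_3<35/159$. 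So the stated exponents do not follow from the reduction as you set it up.

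Secondary, but also substantive: dropping one of the three spectral constraints when bounding $|R_j|$ discards information the paper retains (in the first part it keeps both convolutions $(S_A^{-1}\otimes W_1)(r)(S_B^{-1}\otimes W_2)(r)$), and the entire apparatus that actually produces the thresholds $p^{39/47}$, $p^{7/9}$ and the fallback $t_*^{-69/183}$ --- the splitting of the spectral piece into $s$ blocks so that the two--sided condition \eqref{cond:technical_mc} and conditions \eqref{cond:technical_mc_add}, \eqref{cond:technical_mc_add'} can be verified, together with the lower bounds of type $|W_1|,|W_2|\gtrsim\d^2t_*$ that are themselves used to gain further powers of $t$ --- is deferred in your sketch rather than carried out. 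Your opening moves (Fourier dualisation, reduction to a subfamily with distinct abscissas/ordinates/ratios via Definitions~\ref{def:T} and \ref{def:T_*}, dyadic localisation to spectra) are in the right spirit, but to repair the argument you must first square the basic inequality via Cauchy--Schwarz with Parseval, and then apply Proposition~\ref{p:technical_mc} with the coefficient set in the role of $D$, as in the paper.
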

\begin{proof}
    Let $|A| = \d p$ and $t=\mathcal{T}(\mathcal{E})$.
    By assumption the set $A$ avoids all equations from the family $\mathcal{E}$.
    Using the Fourier transform, we see that it is equivalent to
\begin{equation}\label{tmp:23.10.2016_1}
    0 = \sum_r \FF{A} (a_j r) \FF{A} (b_j r) \FF{A} (c_j r) e(-d_j r)
        = |A|^3 + \sum_{r\neq 0} \FF{A} (a_j r) \FF{A} (b_j r) \FF{A} (c_j r) e(-d_j r)
\end{equation}
    for all $j \in [t]$.
    Applying Parseval identity (\ref{F_Par}) three times, we have
\begin{equation}\label{f:first_25.07}
    2^{-1} |A|^3
        \le
            \sum_{r \in (a^{-1}_j B) \cap (b^{-1}_j B) \cap (c^{-1}_j B) } |\FF{A} (a_j r)| |\FF{A} (b_j r)| |\FF{A} (c_j r)| \,,
\end{equation}
    where $B = \Spec_{\eps} (A) \setminus \{0 \}$, $\eps =\d/6$. Indeed,
$$
    \sum_{r \notin c^{-1}_j B } |\FF{A} (a_j r)| |\FF{A} (b_j r)| |\FF{A} (c_j r)|
        \le
            \eps |A| \sum_{r} |\FF{A} (a_j r)| |\FF{A} (b_j r)|
                \le
$$
$$
                \le
                    \eps |A|
                        \left( \sum_{r} |\FF{A} (a_j r)|^2 \right)^{1/2}
                        \left( \sum_{r} |\FF{A} (b_j r)|^2 \right)^{1/2}
                            \le
                                \eps |A| |A| p = |A|^3 / 6
$$
and similar for another two terms.
Here we have used that $a_j,b_j,c_j$ are nonzero numbers.

Let
$S:= \{ s_1,\dots, s_t\} \subseteq S(\mathcal{E})$ such that, say, $s_j = (a_j,b_j,1)$, where $a_j,b_j$ are different.
In particular, $a_j$, $b_j$ belong to two sets $S_A$, $S_B$, correspondingly,
and $|S_A| = |S_B| =t$.
Now let us return to (\ref{f:first_25.07}).
Summing the last
estimate
over $S$, and using the Cauchy--Schwartz inequality and Parseval identity (\ref{F_Par}), we have
$$
	t^2 |A|^6
		\ll
            \left( \sum_{r\in B} \, |\FF{A} (r)| \sum_{j=1}^t |\FF{A} (a_j r)| |\FF{A} (b_j r)| B(a_j r) B(b_j r) \right)^2
        \ll
$$
$$
        \ll
			\sum_{r} |\FF{A} (r)|^2  \cdot
				\sum_{r \in B} \left( \sum_{j=1}^t\,  |\FF{A}(a_j r)| |\FF{A}(b_j r)| B(a_j r) B(b_j r) \right)^2
    \le
$$
$$
    \le
        |A| p  \cdot \sum_{r \in B} \left( \sum_{j=1}^t\,  |\FF{A}(a_j r)| |\FF{A}(b_j r)| B(a_j r) B(b_j r) \right)^2
    \,.
$$
Using the pigeonholing  principle twice, we find two numbers $\Delta_1$, $\Delta_2 \le |A|$ and two sets $W_1,W_2 \subseteq B$ such that
 $\D_1 < |\FF{A} (r)|\le 2 \D_1$ for $r\in W_1$, $\D_2 < |\FF{A} (r)|\le 2 \D_2$ for $r\in W_2$ and
$$
	t^2 |A|^6 \lesssim
            |A| p \D_1^2 \D_2^2 \cdot \sum_{r\in B} \left( \sum_{j=1}^t W_1 (a_j r) W_2 (b_j r) \right)^2
    \le
$$
\begin{equation}\label{f:new_basic}
    \le
|A| p \D_1^2 \D_2^2 \cdot \sum_{r\in B} (S^{-1}_A \otimes W_1) (r) (S^{-1}_B \otimes W_2) (r) \,.
\end{equation}
Put $\eps_1 = \D_1/|A|$, $\eps_2 = \D_2/|A|$.
In particular, from formula (\ref{f:new_basic}), combining with (\ref{F_Par}), we get
\begin{equation}\label{tmp:24.08_0}
    t^2 |A|^6 \lesssim |A| p \D_1^2 \D_2^2 |W_1| |W_2| |B| \le (|A| p)^3 |B|
\end{equation}
and hence
\begin{equation}\label{tmp:24.08_1}
    |B| \gtrsim \d^3 t^2 \,.
\end{equation}
as well as
\begin{equation}\label{tmp:24.08_2}
    |B| |W_1| \gtrsim \d^2 t^2  \quad \mbox{ and } \quad
    |B| |W_2| \gtrsim \d^2 t^2 \,.
\end{equation}
In particular, in view of $|B| \ll \d^{-3}$, we obtain
\begin{equation}\label{tmp:24.08_10}
    |W_1|, |W_2| \gtrsim \d^5 t^2 \,.
\end{equation}
The singes $\lesssim$, $\gtrsim$ in formulas  (\ref{tmp:24.08_0})---(\ref{tmp:24.08_10}) as well as in all formulas below depend on the size of the set $|B|$.
The last quantity is less than  $O(\d^{-3})$ and so it depends on the density of the set $A$ but on the size of $A$.
Thus  we can remove these logarithms requiring strictly smaller power of $\d$
in the formulation of the theorem.

Now put $m=\min \{ |W_1|, |W_2| \}$ and let $m=|W_2|$ for certainty.
Further we have
$$
    t^2 |A|^6 \lesssim |A| p \D_1^2 \D_2^2 m \cdot \sum_{r\in B} (S^{-1}_A \otimes W_1) (r)  \,.
$$
As above by (\ref{f:spec_Par}) we see that $|B| \ll \d^{-3}$, so if $\d^{-3} \gg t$, then $\d \ll t^{-1/3}$ and it is nothing to prove.
Hence one can assume that $|W_1|, |W_2|, |B| \le t$ (again we ignore signs $\ll$, $\gg$ increasing the constants in the final inequalities
as in Lemma  \ref{l:D_Fp_pre}).
Split the set $B$ into some $s$ sets $B_j$ of approximately equal sizes, where $s$ is a parameter which we will choose later.
Using the  bound $|B| \ll \d^{-3}$ and applying the Parseval identity one more time as well as  the second part of Proposition \ref{p:technical_mc}
with $\eps = \eps_1 = \D_1/|A|$, $B =W_1$, $C=B^{-1}_j$, $D=S_A$, we obtain
\begin{equation}\label{f:calc_1}
    t^2 \d^6 p^6 =
        t^2 |A|^6 \lesssim  |A| p \D_1^2 \D_2^2 m \sum_{j=1}^s \sum_{r\in B_j} (S^{-1}_A \otimes W_1) (r)
            \lesssim
\end{equation}
\begin{equation}\label{f:calc_1.5}
            \lesssim
                |A| p \D_1^2 \D_2^2 m \d^{-3/16} (\D_1 / |A|)^{-3/4} t^{1/8} |W_1|^{11/16} \sum_{j=1}^s |B_j|^{15/16}
        \ll
\end{equation}
\begin{equation}\label{f:calc_2-}
        \ll
            |A|^2 p^2 \d^{-3} |A|^{3/4} \D_1^{5/4} |W_1|^{11/16} t^{1/8} s^{1/16}
                \le
                    \d^{-1/4} p^{19/4} (\D^2_1 |W_1|)^{5/8} |W_1|^{1/16} t^{1/8} s^{1/16}
                    \le
\end{equation}
\begin{equation}\label{f:calc_2}
                    \le
                        p^6 \d^{3/8} |W_1|^{1/16} t^{1/8} s^{1/16} \,.
\end{equation}
It remains to check that all conditions of Proposition \ref{p:technical_mc} satisfy
and choose the parameter $s$.
We have already insured  that $|C| \le |D|$ since $|W_1|, |W_2|, |B| \le t$.
If
\begin{equation}\label{f:B_j->B}
        p^2 <\d^{-1/4} \eps^{-1}_1 |W_1|^{9/4}   t^{-1/2} |B_j|^{5/4}
            \le
                \d^{-1/4} \eps^{-1}_1 |W_1|^{9/4}   t^{-1/2} |B|^{5/4}
                    \ll
\end{equation}
$$
                    \ll
                        \d^{-1/4-1-3 \cdot 7/2} = \d^{-47/4}
            \,,
$$
then one can easily arrives  to a contradiction with the assumption $|A| \gg p^{39/47}$.
Thus condition (\ref{cond:technical_mc_add}) takes place.
Further from (\ref{tmp:24.08_1}) for any $j$, it follows that
$$
    \d \eps^4_1 |B_j|^3 t^2 \gg \d^5 |B|^3 t^2 s^{-3} \ge \d^5 |W_1| |B|^2 t^2 s^{-3} \gtrsim \d^{11} |W_1| t^6 s^{-3} \ge |W_1|
$$
provided $s \le \d^{11/3} t^2$.
Further
$$
    \d \eps_1^4 |B_j|^{3} t^2 \ll \d |B|^{3} t^2 s^{-3} \ll
        \d^{-8} t^{2} s^{-3}
            \le |W_1|^9
$$
provided $s\gg \d^{-8/3} t^{2/3} |W_1|^{-3}$.
Putting $s= \d^{-8/3} t^{2/3} |W_1|^{-3}$ one can insure that $s \ll \d^{11/3} t^2$
because otherwise in view of (\ref{tmp:24.08_10}), we have
$$
    \d^{-8} t^{2} \gtrsim \d^{11} t^6 |W_1|^9 \gtrsim \d^{56} t^{24}
$$
or, in other words,
$\d\lesssim t^{-11/32}$ which is better than (\ref{f:main1}).
If $s\ll 1$, then from (\ref{f:calc_2}), we obtain
$$
    t^{15/8} \d^{45/8} \lesssim |W_1|^{1/16} \ll \d^{-3/16}
$$
or
$$
    \d \lesssim t^{-10/31} \,.
$$
The last bound coincides with (\ref{f:main1}).
Finally, we should note that in the  case $s\ll 1$ one quickly insure that the condition
$
    \d \eps_1^4 |B|^{3} t^2 \gtrsim |W_1|
$
takes place.
Thus from (\ref{f:calc_2}), (\ref{tmp:24.08_10}) and our choice of the parameter $s$, it follows that
$$
    t^2 \d^6 \lesssim \d^{3/8} t^{1/8} \d^{-1/6} t^{1/24} |W_1|^{-1/8}
        \lesssim
            \d^{3/8} t^{1/8} \d^{-1/6} t^{1/24} \cdot \d^{-5/8} t^{-1/4}
$$
or
$$
    \d \lesssim t^{-25/77}
$$
which is better than (\ref{f:main1}) again.

\bigskip

Now let us prove the second part of the theorem.
Put $t_* = \mathcal{T}_* (\mathcal{E})$ and let $S_* = \{ s_1,\dots, s_{t_*}\}$ be the set from the Definition \ref{def:T_*}
(without loosing of the generality we consider the intersection of $S(\mathcal{E})$ with the plane $\{ z=1\}$).
Returning to (\ref{f:first_25.07}) and then after changes of variables, we have for any $j\in [t_*]$ that
$$
    2^{-1} |A|^3 \le \sum_{r \in B \cap (B/s_j) \cap (B/s'_j)} |\FF{A} (s_j r)| |\FF{A} (s'_j r)| |\FF{A} (r)| \,.
$$
Here $s'_j = a_j$ if $s_j = b_j$, further $s'_j = b_j$ if $s_j = a_j$ and, finally,  $s'_j = b^{-1}_j$ if $s_j = a_j/b_j$.
Since $s'_j \neq 0$, we obtain by the Cauchy--Schwarz inequality and formula (\ref{F_Par})
\begin{equation}\label{tmp:25.08_2}
    |A|^6 \ll |A|p \sum_{r \in B \cap (B/s_j)} |\FF{A} (s_j r)|^2 |\FF{A} (r)|^2 \,.
\end{equation}
Thus, summing over $j\in [t_*]$, we get
\begin{equation}\label{tmp:26.08_2}
    |A|^5 t_* \ll p \sum_{j=1}^{t_*}\, \sum_{r \in B \cap (B/s_j)} |\FF{A} (s_j r)|^2 |\FF{A} (r)|^2 \,.
\end{equation}
Using the pigeonholing  principle twice, we find two numbers $\Delta_1$, $\Delta_2 \le |A|$ and two sets $W_1,W_2 \subseteq B$ such that
 $\D_1 < |\FF{A} (r)|\le 2 \D_1$ for $r\in W_1$, $\D_2 < |\FF{A} (r)|\le 2 \D_2$ for $r\in W_2$ and
\begin{equation}\label{tmp:30.07_1'}
    |A|^5 t_* \lesssim p \D_1^2 \D_2^2  \sum_r W_2 (r) (W_1 \otimes S^{-1}_*) (r) \,.
\end{equation}
As above, we have $|W_1|, |W_2| \le t_*$ because otherwise it is nothing to prove.
Similarly, one can check that the conditions
$$
    \d^{-1/4} \eps_1^{-1} |W_1|^{9/4}   t^{-1/2}_* |W_2|^{5/4} < p^2 \,, \quad \quad
    \d^{-1/4} \eps_2^{-1} |W_2|^{9/4}   t^{-1/2}_* |W_1|^{5/4} < p^2 \,.
$$
follows from the assumption $|A| \gg p^{39/47}$.
Here $\eps_1 = \D_1/|A|$ and $\eps_2 = \D_2 / |A|$.
Further from (\ref{tmp:30.07_1'}), we obtain
$$
	t_* \d |A|^4 \ll \D_1^2 \D_2^2 |W_1| |W_2| \,.
$$
Whence in view of the Parseval identity, we have
\begin{equation}\label{tmp:30.07_2}
	|W_1|, |W_2| \gg \d^2 t_* \quad \quad \mbox{ and } \quad \quad |W_1| |W_2| \gg \d t_* \,.
\end{equation}
Suppose that $|W_2| \ge |W_1|$ for certainty.
Split the set $W_2$ into some $s$ sets $W^{(j)}_2$ of approximately equal sizes, where $s$ is a parameter which we will choose later.
Bounds (\ref{tmp:30.07_2})
imply for any $j$
$$
    \d \eps^4_1 |W^{(j)}_2|^3 t_*^2
        \gg
        \d^5 |W_2|^3 t_*^2 s^{-3}
            \gg
            |W_1|
$$
provided $s\ll |W_2| t_*^{2/3} \d^{5/3} |W_1|^{-1/3}$.
Further
$$
    \d \eps^4_1 |W^{(j)}_2|^3 t_*^2  \ll \d |W_2|^3 t_*^2 s^{-3}
    \ll |W_1|^9
$$
provided $s\gg \d^{1/3} |W_2| t_*^{2/3} |W_1|^{-3}$.
Putting $s= \d^{1/3} |W_2| t_*^{2/3} |W_1|^{-3}$ one can insure that
$s\ll |W_2| t_*^{2/3} \d^{5/3} |W_1|^{-1/3}$ because otherwise in view of (\ref{tmp:30.07_2}), we have
$$
    1 \gg |W_1|^{8/3} \d^{4/3} \gg t_*^{8/3} \d^{20/3}
$$
or, in other words, $\d \lesssim t_*^{-2/5}$ which is better than (\ref{f:main2}).
Suppose, in addition, that $s\gg 1$.
Thus all conditions of the second part of Proposition \ref{p:technical_mc} takes place.
Applying arguments as in (\ref{f:calc_1})--(\ref{f:calc_2}), bounds (\ref{tmp:30.07_2})
and using
Proposition \ref{p:technical_mc} with $\eps=\eps_1$, $B=W_1$, $C= (W^{(j)}_2)^{-1}, D=S_*$ and the Parseval identity, we have
\begin{equation}\label{f:calc_1'}
    |A|^5 t^{7/8}_* \lesssim p \D_1^2 \D_2^2  \d^{-3/16} (|A|/\D_1)^{3/4} |W_1|^{11/16} (|W_2|/s)^{15/16} s
        =
\end{equation}
$$
    =
            p |A|^{3/4} \D_1^{5/4} \D_2^2 \d^{-3/16} |W_1|^{11/16} |W_2|^{15/16} s^{1/16}
                \le
$$
\begin{equation}\label{f:calc_2'}
                \le
                    p |A|^{3/4} (p|A|)^{13/8} \d^{-3/16} |W_1|^{1/16} |W_2|^{-1/16} s^{1/16}
                        =
                            p^{21/8} |A|^{19/8} \d^{-3/16} |W_1|^{1/16} |W_2|^{-1/16}  s^{1/16}
\end{equation}
$$
    \ll
         p^{5} \d^{35/16} (\d^{1/3} t_*^{2/3} |W_1|^{-2})^{1/16}
            \ll
                p^{5} \d^{35/16} (\d^{-11/3} t_*^{-4/3} )^{1/16} \,.
$$
It gives us
\begin{equation}\label{f:first_est_dt'}
    \d \lesssim t_*^{-23/73}
\end{equation}
which is better than (\ref{f:main2}).
If $s \ll 1$ then from (\ref{f:calc_2'}) and (\ref{tmp:30.07_2}), we see that
$$
    \d^5 t^{7/8}_* \lesssim \d^{35/16} |W_1|^{1/16} |W_2|^{-1/16} \lesssim \d^2  (\d^2 t_*)^{-1/16}
        =
        \d^{15/8} t_*^{-1/16}
$$
or, in other words,
$$
    \d \lesssim t_*^{3/10}
$$
which coincides with (\ref{f:main2}).
Finally, we should note that in the  case $s\ll 1$
in view of the inequality $|W_2| \ge |W_1|$ and bound (\ref{tmp:30.07_2}), we easily have
$$
    \d \eps^4_1 |W_2|^3 t_*^2
        \gg
        \d^5 |W_1| |W_2|^2 t_*^2
            \gg
                \d^9 |W_1| t_*^4
                 \gg
            |W_1|
$$
because otherwise we obtain $\d \lesssim t_*^{-4/9}$ which is much better than (\ref{f:main2}).

\bigskip

It remains to prove the last part of the theorem.
Returning to (\ref{tmp:26.08_2}) and squaring, we obtain
$$
    |A|^{10} t_*^2 \lesssim p^3 \E^{+}_* (A) \D^4 \sum_{r\in B} (W \otimes S^{-1}_*)^2 (r) \,.
$$
Here $\D := \eps |A| \le |A|$ and $W \subseteq B$ comes from the pigeonhole principle as above.
Notice that the condition $|A| \gg p^{7/9}$ implies
$$
    |B| \le (\d \eps^2)^{-1} \ll \d^{-3} < p^{2/3} \,.
$$
Further the condition (recall the inequality $|W| \gtrsim \d^2 t_*$)
$$
    t_*^{45} \d^{15} \eps^{60} |W|^{24}
        \gg
        t_*^{45} \d^{75} |W|^{24}
            \gtrsim
                t_*^{69} \d^{123}
            \gg
            |B|^{20}
$$
trivially holds  because otherwise
$$
    \d \ll t_*^{-69/183} \,.
$$
Hence, applying the third part of Proposition \ref{p:technical_mc} with $C=B$, $B=W$, $D=S_*^{-1}$, $\eps =\D/|A|$
    and the Parseval identity, we get
$$
    |A|^{10} t_*^{35/22}
        \lesssim
            p^3 \E^{+}_* (A) \D^4 \d^{-4/11} (|A|/\D)^{16/11} |B|^{9/11} |W|^{29/22}
                =
$$
$$
                =
                    p^3 \E^{+}_* (A) |A|^{16/11} \D^{28/11} \d^{-4/11} |B|^{9/11} |W|^{29/22}
                        \le
                            p^3 \E^{+}_* (A) |A|^{16/11} (|A|p)^{28/22} \d^{-4/11} |B|^{9/11} |W|^{1/22} \,.
$$
Using $|B|, |W| \ll \d^{-3}$, we have
$$
    \d^{159/22} \lesssim \E^{+}_* (A)/|A|^3 \cdot t_*^{-35/22}
$$
or
$$
    \d \lesssim t_*^{-35/159} \cdot \left( \E^{+}_* (A)/|A|^3 \right)^{22/159} \,.
$$
This completes the proof.
$\hfill\Box$
\end{proof}

\bigskip

In view of Lemma \ref{l:TE},
we obtain

\begin{corollary}
    Let $\mathcal{E}$ be a finite family of equations of form (\ref{f:E_form}).
    Also, let $A \subseteq \F_p$ be a set avoiding the family $\mathcal{E}$, $|A| \gg p^{\frac{39}{47}}$.
    Then for any $\kappa < \frac{5}{31}$ one has
$$
    |A| \ll \frac{p}{|\mathcal{E}|^{\kappa}} \,.
$$
\label{c:result_with_E}
\end{corollary}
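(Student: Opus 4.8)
The plan is to combine the already-proven estimate (\ref{f:main1}) of Theorem \ref{t:main} with a lower bound for $\mathcal{T}(\mathcal{E})$ in terms of $|\mathcal{E}|$. First I would recall Lemma \ref{l:TE_T}, which gives $\mathcal{T}(\mathcal{E}) \ge |\mathcal{E}|^{1/2}$ for every family $\mathcal{E}$ of equations of the form (\ref{f:E_form}); this is the only structural input needed beyond Theorem \ref{t:main} itself.

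Given $\kappa < \frac{5}{31}$, I would then set $\kappa_1 := 2\kappa$, so that $\kappa_1 < \frac{10}{31}$ and the first part of Theorem \ref{t:main} applies verbatim, its single hypothesis $|A| \gg p^{39/47}$ being exactly the one assumed in the corollary. Hence $|A| \ll p/\mathcal{T}(\mathcal{E})^{\kappa_1}$, and substituting $\mathcal{T}(\mathcal{E}) \ge |\mathcal{E}|^{1/2}$ yields $|A| \ll p/(|\mathcal{E}|^{1/2})^{2\kappa} = p/|\mathcal{E}|^{\kappa}$, which is the asserted bound.

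There is essentially no obstacle here; the only thing to watch is the bookkeeping of exponents, where the factor $\frac12$ lost in passing from $\mathcal{T}(\mathcal{E})$ to $|\mathcal{E}|^{1/2}$ converts the admissible range $\kappa_1 < \frac{10}{31}$ into $\kappa < \frac{5}{31}$. I would also remark that one could instead feed the weaker-looking but in fact comparable bound $\mathcal{T}_* (\mathcal{E}) \gg |\mathcal{E}|^{1/2}$ (Lemma \ref{l:TE}) into the second estimate (\ref{f:main2}), but since there $\kappa_2 < \frac{3}{10}$ this only gives $\kappa < \frac{3}{20} < \frac{5}{31}$; so (\ref{f:main1}) is the efficient estimate to use, and the third part of Theorem \ref{t:main}, which carries the extra factor $(\E^{+}_* (A)/|A|^3)^{22/159}$ and the side condition $\mathcal{T}_* (\mathcal{E}) < p^{2/3}$, plays no role.
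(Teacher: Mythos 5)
Your proof is correct and is essentially the paper's own derivation: the paper deduces the corollary in one line from Theorem \ref{t:main} together with the lower bound $\mathcal{T}(\mathcal{E})\ge|\mathcal{E}|^{1/2}$, exactly as you do by taking $\kappa_1=2\kappa<\frac{10}{31}$ in (\ref{f:main1}). The only discrepancy is that the paper's text cites Lemma \ref{l:TE} (which bounds $\mathcal{T}_*$ and via (\ref{f:main2}) would only give $\kappa<\frac{3}{20}$), whereas the exponent $\frac{5}{31}$ indeed requires Lemma \ref{l:TE_T} as you correctly use, so the paper's reference appears to be a minor slip.
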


\bigskip

If one use the parameter $s$ in estimate (\ref{f:B_j->B}), then the restriction $|A| \gg p^{\frac{39}{47}}$ in the first two parts of Theorem \ref{t:main} as well as in  Corollary \ref{c:result_with_E} can be refined.
We do not make such calculations.

\bigskip

Clearly, Proposition \ref{p:low_bounds}, combining with Theorem \ref{t:main} give Theorem \ref{t:main_intr} from the introduction.
Further it is easy to see that $\E^{+} (A) = o(|A|^3)$ implies that any set avoiding just {\it one} equation has size $o(p)$.
Inequality (\ref{f:main3}) can be considered as a generalization of this fact for several equations.

\begin{remark}
    It is easy to see from the proof of Theorem \ref{t:main} that the same arguments work for sets having, say,
    at most $|A|^3/(4p)$ or at least $2|A|^3/p$ solutions of equations (\ref{f:E_form}).
    In other words, the number of solutions must differ from the expectation significantly.
\label{r:expectation}
\end{remark}

After this paper was written Tomasz Schoen found a simpler proof of the first part of Theorem \ref{t:main} without using sum--product method.
Indeed, let us make first steps (\ref{tmp:23.10.2016_1})---(\ref{tmp:24.08_1}).
Recalling that $|B| \ll \d^{-3}$, we get $\d \lesssim t^{-1/3}$.
More precisely,
we obtain
$$
    t^2 |A|^6 \ll |A| p \cdot \sum_{r\in B} \left( \sum_{j=1}^t |\FF{A} (a_j r)| |\FF{A} (b_j r)| \right)^2
        \le
            (|A| p)^3 |B| \ll (|A| p)^3 \d^{-3} \,,
$$
whence $\d \ll t^{-1/3}$.
So, in particular, this stronger result takes place in a general field.
We leave the old proof in the paper  because it is more effective in another regimes, see e.g. the proofs of Theorems \ref{t:T_k_new}, \ref{t:E_lx_new}.

\section{Further applications}
\label{sec:applications}


This section contains three applications of the results above.
Let us consider the first one.


In \cite{Ruzsa_non-av}, \cite{Non-av} authors
studied
a family of subsets of $\Z$ which generalize arithmetic progressions of length three.
Let us recall the definition.
Let $t\ge 1$ be  a fixed integer. A finite set $A\subset \Z$ is called {\it non--averaging of order $t$}, if for every $1\le m,n\le t$
the equation
\begin{equation}\label{def:non-av}
	m X_1 + n X_2 = (m+n) X_3
\end{equation}
have just trivial solutions: $X_1=X_2=X_3$.
For example, if $t=1$, then $A$ is non--averaging of order $1$ iff $A$ has no arithmetic progressions of length three.
The best upper bound for the size of a subset of $[N]$ having no arithmetic progressions of length three as well the history of the question
can be found in \cite{Bloom}.
Namely, developing the method of Sanders \cite{Sanders_AP3}, T.F. Bloom proved that
\begin{equation}\label{f:AP3}
	|A| \ll \frac{N (\log \log N)^{4}}{\log N} \,.
\end{equation}

Here we obtain a new upper bound for the size of a non--averaging set of order $t$ in $\F_p$, that is, a set having no non--trivial solutions of  system (\ref{def:non-av}) in $\F_p$.
It is known that the modular version of the question about the density of arithmetic progressions is equivalent to the integer case.
In particular, inequality (\ref{f:AP3}) takes place with $N=p$ for sets $A \subseteq \F_p$ without solutions $x+y \equiv  2z \pmod p$.

\begin{theorem}
	Let $A\subseteq \F_p$ be a non--averaging set of order $t$, $t< \sqrt{p}$.
    Then
\begin{equation}\label{f:non-av}
	|A| \ll \frac{p}{t^{2/3}} \,.
\end{equation}
\label{t:non-av}
\end{theorem}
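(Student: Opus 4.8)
The plan is to reduce Theorem \ref{t:non-av} to the ``simple'' Fourier--Parseval argument behind Theorem \ref{t:main} (the one attributed to Tomasz Schoen at the end of Section \ref{sec:proof}), applied to the two-parameter family of equations coming from (\ref{def:non-av}). Concretely, I take $\mathcal E$ to be the family of equations $mx+ny-(m+n)z=0$ with $1\le m,n\le t$. Since $t<\sqrt p$ we have $1\le m,n\le m+n\le 2t<p$, so all coefficients are nonzero modulo $p$ and $\mathcal E$ is a legitimate family of the form (\ref{f:E_form}) with $d_j=0$.

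First I would dispose of the trivial range. If $|A|^2<4p$ then $|A|<2\sqrt p$, and since $t<\sqrt p$ forces $p/t^{2/3}>p^{2/3}\ge\sqrt p$, we already get $|A|\ll p/t^{2/3}$; so from now on assume $|A|^2\ge 4p$. Writing $N_j$ for the number of solutions of the $j$-th equation in $A$, the non-averaging property means $N_j=|A|$ (only the diagonal), so by Fourier inversion $|A|p=|A|^3+\sum_{r\ne0}\FF A(a_jr)\FF A(b_jr)\FF A(c_jr)$, whence $\big|\sum_{r\ne0}\FF A(a_jr)\FF A(b_jr)\FF A(c_jr)\big|=|A|\,\big||A|^2-p\big|\ge\tfrac34|A|^3$. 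From here the derivation of (\ref{f:first_25.07}) goes through as in Section \ref{sec:proof} with $\eps=\d/6$ and $B:=\Spec_\eps(A)\setminus\{0\}$: peeling off, in each of the three variables, the frequencies outside $B$ costs at most $\eps|A|\cdot|A|p=|A|^3/6$ by Cauchy--Schwarz and Parseval, so after removing at most $|A|^3/2$ in total one is left with $|A|^3/4\le\sum_{r\in(a_j^{-1}B)\cap(b_j^{-1}B)\cap(c_j^{-1}B)}|\FF A(a_jr)||\FF A(b_jr)||\FF A(c_jr)|$.

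Next I would run exactly the computation that yields $\d\ll\mathcal T(\mathcal E)^{-1/3}$: sum the last inequality over a subset $S\subseteq S(\mathcal E)\cap\{z=1\}$ of size $\mathcal T(\mathcal E)$ whose first two coordinates are pairwise distinct, apply Cauchy--Schwarz in $r$ with $\sum_r|\FF A(r)|^2=|A|p$, bound $\big(\sum_j|\FF A(a_jr)||\FF A(b_jr)|\big)^2\le\big(\sum_j|\FF A(a_jr)|^2\big)\big(\sum_j|\FF A(b_jr)|^2\big)\le(|A|p)^2$ (here distinctness of the $a_j$ and of the $b_j$ is used), and finally insert $|B|\ll\d^{-3}$ from (\ref{f:spec_Par}). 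This gives $\mathcal T(\mathcal E)^2|A|^6\ll(|A|p)^3|B|\ll p^6$, i.e. $|A|\ll p/\mathcal T(\mathcal E)^{1/3}$; note no lower bound relating $|A|$ and $p$ is needed, only $t<\sqrt p$.

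The remaining point, which I expect to be the only genuinely new ingredient, is the lower bound $\mathcal T(\mathcal E)\gg t^2$. Intersecting $\mathcal E$ with $\{z=1\}$, the $(m,n)$-equation becomes the point $\big(\tfrac{-m}{m+n},\tfrac{-n}{m+n}\big)$ in the $(x,y)$-plane; its abscissa determines and is determined by the ratio $n/m$, and since $m,n,m',n'\in[1,t]$ with $t^2<p$ one has $n/m\equiv n'/m'\pmod p$ iff $nm'=n'm$ in $\Z$ iff $n/m=n'/m'$ in $\Q$. Hence distinct coprime pairs $(a,b)\in[1,t]^2$ give distinct abscissae and, because abscissa plus ordinate equals $-1$, distinct ordinates as well; the number of such pairs is $\gg t^2$ (coprime lattice points of height $\le t$), so $\mathcal T(\mathcal E)\gg t^2$. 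Combining with $|A|\ll p/\mathcal T(\mathcal E)^{1/3}$ gives $|A|\ll p/t^{2/3}$, as claimed. The main difficulty is thus bookkeeping rather than conceptual: checking that all coefficients remain nonzero mod $p$, handling the diagonal solutions through the $|A|^2\ge4p$ split, and verifying the mod-$p$ injectivity of the ratio map — all of which rest precisely on the hypothesis $t<\sqrt p$.
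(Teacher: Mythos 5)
Your proposal is correct and follows essentially the same route as the paper: the paper likewise encodes the non--averaging condition as the family of equations indexed by the ratios $n/m$, shows $\mathcal{T}(\mathcal{E})=|[t]/[t]|\gg t^2$ using $t<\sqrt p$ (via square--free numbers, where you use coprime pairs), handles the diagonal solutions exactly through the $|A|^2\gtrless 4p$ dichotomy of Remark \ref{r:expectation}, and then invokes the simple Fourier--Parseval bound $|A|\ll p/\mathcal{T}(\mathcal{E})^{1/3}$ described after that remark. Your write--up is in fact slightly more explicit than the paper's (which cites Theorem \ref{t:main} and the post--remark argument rather than re-running it), but the mathematics coincides.
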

\begin{proof}
By our assumption  the set $A$ avoids all equations from (\ref{def:non-av}).
In other words, $X_1 + n/m \cdot X_2 = (1+n/m) X_3$, where $X_1,X_2,X_3\in A$ implies $X_1=X_2=X_3$.
Thus, we have the correspondent system $\mathcal{E}$ with the set $S(\mathcal{E})$ of cardinality $|[t]/ [t]|$.
In a similar way
$$
	\mathcal{T} (\mathcal{E}) = |\{ n/m ~:~ n,m \in [t]\}| = |[t]/ [t]| \,.
$$
	Considering square--free numbers, it is easy to see in view of the assumption $t< \sqrt{p}$ that  $|[t]/ [t]| \gg t^2$
	both in $\Z$ and in $\F_p$ (consult the proof of Proposition \ref{p:low_bounds}).
	Although Theorem \ref{t:main} was formulated just for sets having no solutions at all,
    it is easy to insure that
the number of trivial solutions is $|A|$.
	Thus
if $|A| p \le |A|^3/4$, say,  then the method of the proof works (see Remark \ref{r:expectation}).
	Of course, if $|A| p > |A|^3/4$, then $|A| < 2\sqrt{p}$ and there is nothing to prove.
	Whence,
	applying the first  part of Theorem \ref{t:main} (and the arguments after Remark \ref{r:expectation}), we obtain the required result.
$\hfill\Box$
\end{proof}

\bigskip

Thus, taking any
$C>3/2$
and $t$ such that $t \ge (\log p)^{C}$, we see that bound (\ref{f:non-av}) is better than (\ref{f:AP3}) in this case.

\bigskip

Now consider another application.


Let $A\times A$ is the Cartesian product of a set $A\subseteq \F_p$.
The number of {\it collinear triples} $\T(A)$ in  $A\times A$ is an important characteristic of a set,
see \cite{AMRS}, \cite{s_multD}, \cite{S_Z}, say.
Observe
(or see \cite{s_multD}, \cite{S_Z})
that
$$
	\T(A) = \left| \left\{ \frac{a_1-a}{a'_1-a'} = \frac{a_2-a}{a'_2-a'}
					~:~ a_1,a_2,a,a'_1,a'_2,a' \in A \right\} \right| \,.
$$
(we suppose in the formula above that $a'_1=a'$ implies $a'_2=a'$ and vice versa).
Another formula for $\T(A)$ is (see \cite{s_multD}, \cite{S_Z} again)
\begin{equation}\label{f:T_1_f}
	\T(A) = \sum_{a,a'\in A} \E^\times(A-a, A-a') + O(|A|^4) \,.
\end{equation}
The quantity $\T[A]$ is
naturally
connected with the set
\begin{equation}\label{f:T_1_f'}
	R[A] := \left\{ \frac{a_1-a}{a_2-a} ~:~ a_1,a_2,a\in A,\, a_2 \neq a \right\} \,.
\end{equation}
Namely,
\begin{equation}\label{f:T_1_f''}
	\T(A) = \sum_{\lambda \in R[A]} q^2 (\lambda) + O(|A|^4) \,,
\end{equation}
where
$$
	q(\lambda) = \left| \left\{ \frac{a_1-a}{a_2-a} = \lambda ~:~ a_1,a_2,a\in A,\, a_2 \neq a \right\} \right| \,.
$$

\bigskip

In \cite{AMRS} authors obtained an upper bound for $\T(A)$ in the case of small sets $A$.

\begin{theorem}
	Let $A\subset \F_p$ be a set with $|A|<p^{2/3}$.
	Then
$$
	\T(A) \ll |A|^{9/2} \,.
$$
\end{theorem}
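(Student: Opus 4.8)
The natural starting point is the identity (\ref{f:T_1_f}),
$$
  \T(A) = \sum_{a,a'\in A}\E^\times(A-a,\,A-a') + O(|A|^4)\,,
$$
so that, since $|A|^4\ls |A|^{9/2}$, it suffices to prove
$M:=\sum_{a,a'\in A}\E^\times(A-a,A-a')\ll |A|^{9/2}$. Unfolding the energies, $M$ is the number of sextuples $(\alpha_1,\alpha_2,\beta_1,\beta_2,a,a')\in A^6$ with $(\alpha_1-a)(\beta_1-a')=(\alpha_2-a)(\beta_2-a')$, and the plan is to control this by the sum--product input of Theorem \ref{t:RRS}, the hypothesis $|A|<p^{2/3}$ being exactly what makes that theorem applicable.

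To see the connection, fix $a$ and write $\beta_i-a'=\beta_i+c$ with $c=-a'\in -A$; the two occurrences of $c$ (one on each side of the equation) may be decoupled into independent variables $c_1,c_2\in -A$, which only increases the count, giving
$$
  \sum_{a'\in A}\E^\times(A-a,A-a') \ls \bigl|\{\,(\alpha_1-a)(\beta_1+c_1)=(\alpha_2-a)(\beta_2+c_2)\ :\ \alpha_i,\beta_i\in A,\ c_i\in -A\,\}\bigr|\,.
$$
The right--hand side is precisely the collision count of Theorem \ref{t:RRS} for the three sets $A-a$, $A$, $-A$, each of size $|A|$; since $|A-a|\cdot|A|\cdot|-A|=|A|^3=O(p^2)$, that theorem yields a bound $\ll (|A|^3)^{3/2}+|A|^3\cdot|A|\ll |A|^{9/2}$. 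The delicate point is to reassemble these slices over the remaining variable without loss: a naive summation over $a\in A$ costs a spurious extra factor $|A|$. I would instead pass first through the Cauchy--Schwarz inequality $\E^\times(X,Y)\ls \E^\times(X)^{1/2}\E^\times(Y)^{1/2}$, reducing $M\ls \bigl(\sum_{a\in A}\E^\times(A-a)^{1/2}\bigr)^2$, and then bound $\sum_{a}\E^\times(A-a)^{1/2}\ll |A|^{9/4}$ through a ``few large / many small'' dichotomy, using that at most $O(1)$ translates $A-a$ can be (nearly) multiplicatively structured --- each contributing at most the trivial $|A|^{3/2}$ --- while for the remaining translates the incidence input gives $\E^\times(A-a)\ll |A|^{5/2}$. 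Making this dichotomy quantitative is where the real work lies.

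An alternative, purely geometric route uses (\ref{f:T_1_f''}): $\T(A)=\sum_{\lambda}q^2(\lambda)+O(|A|^4)$, where $q(\lambda)$ is the number of incidences between the point set $A\times A$ and the pencil of $|A|$ parallel lines $\{\lambda x+(1-\lambda)y=\alpha:\alpha\in A\}$. Since $|A|<p^{2/3}$, Theorem \ref{t:SzT_Fp} bounds the incidences between $A\times A$ and any family of $m$ lines by $O(|A|^{3/2}m^{2/3}+m+|A|^2)$, whence the number of lines meeting $A\times A$ in at least $K$ points is $\ll |A|^{9/2}K^{-3}+|A|^2K^{-1}$; combining this with the trivial identity $\sum_{\ell}|\ell\cap(A\times A)|^2\ll |A|^4$ and summing dyadically over $K$ recovers $\T(A)\ll |A|^{9/2}$ up to a logarithmic factor. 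In both approaches the main obstacle is the same: extracting the clean exponent $9/2$ --- removing the logarithm in the dyadic argument, or the spurious factor of $|A|$ in a careless application of Theorem \ref{t:RRS} --- which forces one to use the precise shape of Theorem \ref{t:RRS}, in particular that its leading term is $(|A||B||C|)^{3/2}$, together with careful bookkeeping of the $O(|A|^4)$ degenerate (zero--factor or repeated--point) solutions.
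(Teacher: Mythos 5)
First, a point of comparison: the paper does not prove this statement at all --- it is quoted from \cite{AMRS}, where it is obtained by applying Rudnev's point--plane incidence theorem to the six-variable collinearity equation directly: $(a,a'),(b,b'),(c,c')$ are collinear iff $(b-a)c'-(c-a)b'+(c-b)a'=0$, which is an incidence between the point $(a',b',c')\in A^3$ and a plane indexed by $(a,b,c)\in A^3$; with $N=|A|^3\le p^2$ (this is exactly the hypothesis $|A|<p^{2/3}$) and at most $|A|$ collinear points, one gets $N^{3/2}+|A|\cdot N\ll |A|^{9/2}$ in one stroke, with no slicing over a variable and no dyadic summation. Measured against that, both of your routes have genuine gaps which you flag but do not close, so the proposal is not a proof. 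In the first route, the slice-and-decouple step via Theorem \ref{t:RRS} gives $\ll|A|^{9/2}$ for each fixed $a$ and hence $|A|^{11/2}$ after summation --- worse than the trivial $|A|^5$ --- and the proposed repair is not available: nothing in the quoted toolkit yields $\E^{\times}(A-a)\ll|A|^{5/2}$ for all but $O(1)$ translates, and no such statement can be ``incidence input'' for a single translate, since $A-a$ may be a geometric progression with $\E^{\times}(A-a)\gg|A|^{3}$ for a chosen $a$. More importantly, controlling how many translates are multiplicatively rich is essentially the theorem itself: by (\ref{f:T_1_f}) the diagonal of the sum you are estimating is $\sum_{a}\E^{\times}(A-a)$, so the ``few large / many small'' dichotomy is circular at precisely the step you call the real work.

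The second route is sound in outline but proves a weaker statement. The rich-line bound $N_{\ge K}\ll |A|^{9/2}K^{-3}+|A|^2K^{-1}$ derived from Theorem \ref{t:SzT_Fp} makes every dyadic scale $2\le K\le |A|$ contribute about $|A|^{9/2}$ to $\sum_{\ell}i_\ell^3$, so the argument delivers $\T(A)\ll|A|^{9/2}\log|A|+|A|^4$, and you give no mechanism for removing the logarithm; with only the Szemer\'edi--Trotter-type corollary as input this is not a routine cleanup, which is precisely why \cite{AMRS} work with the point--plane theorem rather than with rich lines. A smaller issue in the same route: $q(\lambda)$ is the incidence count of an entire pencil of $|A|$ parallel lines, so one must pass to the identity ``$\T(A)=\sum_{\ell}i_\ell^3$ up to $O(|A|^4)$'' directly (which is fine, since $\sum_\lambda q(\lambda)^2$ counts pairs of collinear configurations sharing the base point and line); applying Cauchy--Schwarz at the pencil level instead would cost a factor $|A|$ and reduce the bound to the trivial $|A|^5$. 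In short: the second route can be salvaged only up to a logarithm, the first cannot be salvaged as described, and the clean exponent $9/2$ requires the global (all six variables at once) incidence argument of \cite{AMRS}, which is not reproduced in this paper.
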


Now we extend this result to larger sets, obtaining
an asymptotic formula
for the quantity $\T(A)$.

\begin{theorem}
	Let $A\subseteq \F_p$ be a set, $|A| > p^{2/5}$.
	Then for some absolute constant $C>0$ the following holds
\begin{equation}\label{f:T_k_new}
	\left| \T (A) - \frac{|A|^6}{p} \right| \ll
        \log^C (p/|A|) \cdot
    |A|^{40/9} p^{2/9} \,.
\end{equation}
\label{t:T_k_new}
\end{theorem}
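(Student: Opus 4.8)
The plan is to translate the counting of collinear triples into a sum over the multiplicative convolutions governed by the dilates $A-a$, exactly as in formula (\ref{f:T_1_f}), and then run the harmonic--analytic machinery of Theorem \ref{t:main} (more precisely, the technical Proposition \ref{p:technical_mc}) on the spectrum of $A$. First I would recall that $\T(A) = \sum_{a,a'\in A} \E^{\times}(A-a,A-a') + O(|A|^4)$, so that the whole problem reduces to understanding $\sum_{a,a'} \E^{\times}(A-a,A-a')$ with an error of order $|A|^4$, which is of lower order than the claimed main term $|A|^6/p$ whenever $|A| \gg p^{1/2}$ --- in the stated range $|A| > p^{2/5}$ one has to be a little more careful and track it as part of the error term on the right of (\ref{f:T_k_new}), where it is indeed dominated by $|A|^{40/9} p^{2/9}$.

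Next I would open up each $\E^{\times}(A-a,A-a')$ by Fourier inversion applied to the shifted sets: $\FF{A-a}(\xi) = \FF{A}(\xi) e(a\xi)$, so $|\FF{A-a}(\xi)| = |\FF{A}(\xi)|$, and the energy $\E^{\times}(A-a,A-a')$ becomes a sum over pairs $\xi,\eta$ with a ratio constraint, weighted by $|\FF{A}(\xi)|^2 |\FF{A}(\eta)|^2$ and an exponential phase in $a,a'$. Summing over $a,a'\in A$ reintroduces $\FF{A}$ factors, and the contribution of the zero frequency is precisely $|A|^6/p$ (this is where the main term comes from). The remaining sum is supported, after a dyadic decomposition of $|\FF{A}(\xi)|$, on the spectrum $B = \Spec_{\eps}(A)$ for the relevant threshold $\eps$, and it has exactly the shape $\sum_{r\in B} (S^{-1}\otimes W)^2(r)$-type expressions (with $S$ a ratio set $R[A]$-like object) to which the third part of Proposition \ref{p:technical_mc}, or its simpler first-bound instance combined with Theorem \ref{t:energy*spec}, applies. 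The bookkeeping mirrors the proof of (\ref{f:main3}) in Theorem \ref{t:main}: use Parseval, pigeonhole into level sets $W_1,W_2 \subseteq B$, use $|B|\ll \d^{-3}$ coming from (\ref{f:spec_Par}), and feed the pieces into Proposition \ref{p:technical_mc} to get a power-saving estimate; optimizing the exponents should produce the $|A|^{40/9} p^{2/9}$ bound with the logarithmic factor $\log^C(p/|A|)$ absorbing the $\lesssim$'s accumulated along the way.

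The main obstacle, as usual in this circle of ideas, is the range condition: Proposition \ref{p:technical_mc} and Theorem \ref{t:energy*spec} require $|B| < \d^{-1/6}\eps^{-2/3}\sqrt p$ and similar constraints, and one must verify that for $|A| > p^{2/5}$ these are automatically satisfied --- here the point is that $|B|\ll \d^{-3} = (p/|A|)^3$, so the condition becomes roughly $(p/|A|)^3 \ll (p/|A|)^{1/6}\sqrt p$, i.e. a constraint of the form $|A|^{17/6}\gg p^{7/3}$, i.e. $|A| \gg p^{14/17}$ --- which is slightly stronger than $p^{2/5}$, so in fact one should use the cruder first bound of (\ref{f:technical_mc3}), namely $\sum_{x\in C}(B\otimes D)^2(x) \lesssim \d^{-1/3}\eps^{-4/3}|B|(\E^{\times}(D))^{1/2}$, which needs only $|B| < \d^{-1/6}\eps^{-2/3}\sqrt p$ and is exactly what Theorem \ref{t:energy*spec} delivers; tracking the constant-$\eps$ case (the dyadic level where $|\FF{A}|$ is comparable to $|A|$) and checking that it dominates is the delicate part. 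A second, more routine obstacle is controlling the off-diagonal frequency terms --- those pairs $(\xi,\eta)$ with $\xi\eta^{-1}\notin R[A]$ should not appear, but one must make sure the ratio constraint from the collinearity condition is faithfully encoded and that the phase sums over $a,a'$ are handled by Cauchy--Schwarz rather than discarded; this is the step most likely to hide a subtlety, though it is handled in the same way as in the derivation of (\ref{f:new_basic}) in the proof of Theorem \ref{t:main}.
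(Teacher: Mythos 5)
There is a genuine gap: your sketch is missing the key mechanism that makes the paper's proof work below $|A|\sim p^{14/17}$, namely a level--set decomposition of the \emph{deviation} of the ratio representation function, with a spectral threshold that depends on the level. The paper does not start from (\ref{f:T_1_f}) but from (\ref{f:T_1_f''}): writing $q_*(\lambda)=q(\lambda)-|A|^3/p$, it studies $S_\tau=\{\lambda\neq 0,1:\ |q_*(\lambda)|\ge\tau\}$ and, via the exact Fourier identity (\ref{f:q_*_Fourier}), observes that each $\lambda\in S_\tau$ forces a large trilinear sum supported on $\Spec_\eps(A)$ with $\eps=q_*(\lambda)/(8|A|^2)$, i.e.\ $\eps\gg\tau/|A|^2$. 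Each such $\lambda$ is then treated as a linear equation with anomalous solution count, the machinery of the second part of Theorem \ref{t:main} (pigeonholing into $W_1,W_2$) is run, and the \emph{first} bound (\ref{f:technical_mc1}) of Proposition \ref{p:technical_mc} together with Theorem \ref{t:energy*spec} (applied to $W_2\subseteq\Spec_{\eps_2}(A)$, so its multiplicative energy is controlled) yields $|S_\tau|\lesssim p^8\d^{22/3}\tau^{-4}$. Summing $q_*^2$ over levels and optimizing at $\tau_0\sim |A|^{13/9}p^{2/9}$ produces exactly the exponents $40/9$ and $2/9$; and the spectral size conditions are verified using $\eps_1,\eps_2\gg\tau_0/|A|^2$, which is precisely where the hypothesis $|A|>p^{2/5}$ enters (see (\ref{tmp:12.09.2016_1})).

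Your plan, by contrast, fixes a single dyadic decomposition of $|\FF{A}|$ independent of the deviation level, and your own feasibility check then shows the condition $|B|<\d^{-1/6}\eps^{-2/3}\sqrt p$ fails for constant $\eps$ unless $|A|\gg p^{14/17}$; the fallback you propose, the cruder first bound of (\ref{f:technical_mc3}), involves $\E^{\times}(D)^{1/2}$ where $D$ is the ratio--set--like object $S_\tau$ (or $R[A]$), whose multiplicative energy is not controlled by anything in the paper (it is not a spectrum), so it is not clear this recovers any power saving, let alone $|A|^{40/9}p^{2/9}$; and no optimization over a level parameter is carried out that could produce those exponents. There is also a technical soft spot at the start: the additive Fourier expansion of $\E^{\times}(A-a,A-a')$ with a ``ratio constraint'' is not a clean identity, whereas working directly with $q(\lambda)$ and (\ref{f:q_*_Fourier}) is what actually delivers the trilinear sums over the spectrum. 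So the overall strategy (spectrum plus Proposition \ref{p:technical_mc}) points in the right direction, but without the $\tau$--dependent threshold $\eps\sim\tau/|A|^2$ and the bound on $|S_\tau|$ the argument as proposed does not close in the stated range $|A|>p^{2/5}$.
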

\begin{proof}
Put $|A| = a = \d p$, $q_* (\lambda) = q(\lambda) - a^3/p$.
Because of $\sum_\lambda q(\lambda) = a^2(a-1)$, we have
\begin{equation}\label{f:q'_av}
	\sum_\lambda |q_* (\lambda)| \le \sum_\lambda (q(\lambda) + a^3/p) \le 2 a^3 \,,
\end{equation}
$$
    \sum_\lambda q_* (\lambda) = - a^2 \,,
$$
and hence
\begin{equation}\label{tmp:08.08_1}
	\sum_\lambda q^2(\lambda) = \sum_\lambda \left( q_*(\lambda) + \frac{a^3}{p} \right)^2
	= \sum_\lambda q_*^2(\lambda) + \frac{a^6}{p} + \frac{2a^3}{p} \sum_\lambda q_* (\lambda)
		\le
			 \sum_\lambda q_*^2(\lambda) + \frac{a^6}{p} \,.
\end{equation}
Now for any $\tau \ge 1$ consider the set
$$
	S_\tau := \{ \lambda \neq 0,1 ~:~ |q_* (\lambda)| \ge \tau \} \,.
$$
Clearly, for an arbitrary  $\lambda \neq 0$ the number $q(\lambda)$ is
$$
	q(\lambda) := |\{ a_1,a_2,a\in A ~:~ a_1-\lambda a_2 + (\lambda-1)a = 0\,,\, a_2 \neq a \}|
		=
$$
$$
		=
			|\{ a_1,a_2,a\in A ~:~ a_1-\lambda a_2 + (\lambda-1)a = 0 \}| + 1 \,.
$$
Applying the Fourier transform, we get
\begin{equation}\label{f:q_*_Fourier}
	q_* (\lambda) = p^{-1} \sum_{r\neq 0} \FF{A} (r) \FF{A}(-\lambda r) \FF{A}( (\lambda-1) r) + 1 \,.
\end{equation}
	In particular, at least  $q_* (\lambda)/2$ of the mass of $q_* (\lambda)$ is contained in the set of non--zero
	$r$, $r\in \Spec_\eps (A) \cap \lambda^{-1} \Spec_\eps (A) \cap (\lambda-1)^{-1} \Spec_\eps (A)$,
	where $\eps = q_* (\lambda)/ (8|A|^2)$.
	Thus we have obtained $|S_\tau|$ linear equations of the form (\ref{f:E_form}).
    Also, it is easy to see that we have for the correspondent system $\mathcal{E}$ that $\mathcal{T} (\mathcal{E}) = |S_\tau|$.
Using the arguments and the notations of the proof of the second part of Theorem \ref{t:main},
we constructing the sets $W_1,W_2$ and the numbers $\D_1,\D_2$ such that
$$
    |S_\tau| \tau^2 p^2 \lesssim |A| p \D_1^2 \D_2^2 \sum_{r} W_2 (r) (W_1 \otimes S^{-1}_\tau) (r) \,.
$$
Applying the first part of Proposition \ref{p:technical_mc} with $B=W_1$, $C=W^{-1}_2$ and $D=S_\tau$
as well as the Parseval identity, we obtain
$$
    |S_\tau| \tau^2 p^2 \lesssim a p \D_1^2 \D_2^2 \d^{-1/3} (a/\D_1)^{2/3} (a/\D_2)^{2/3} |W_1|^{1/2} |W_2|^{1/2} |S_\tau|^{1/2}
        =
$$
$$
        =
            a^{7/3} p \D^{4/3}_1 \D^{4/3}_2 \d^{-1/3}   |W_1|^{1/2} |W_2|^{1/2} |S_\tau|^{1/2}
                \le
                    a^{10/3} p^2 \D_1^{1/3} \D_2^{1/3}  \d^{-1/3} |S_\tau|^{1/2}
                        \le
$$
\begin{equation}\label{tmp:12.09.2016_2}
    \le
    a^{10/3} p^2 (\| \FF{A} \|'_\infty)^{2/3}  \d^{-1/3} |S_\tau|^{1/2}
                        \le
                            \d^{11/3} p^6 |S_\tau|^{1/2}
\end{equation}
or, in other words,
\begin{equation}\label{f:S_tau_str}
    |S_\tau| \lesssim p^8 \d^{22/3} \tau^{-4}
\end{equation}
	provided
    the following conditions hold
\begin{equation}\label{tmp:08.08_c1}
	|W_1| < \d^{-1/6} (a/\D_1)^{2/3} \sqrt{p} \,, \quad \quad |W_2| < \d^{-1/6} (a/\D_2)^{2/3} \sqrt{p} \,.
\end{equation}
Let us check conditions (\ref{tmp:08.08_c1}) later.
In view of inequality (\ref{f:q'_av}), it follows that
$$
	\sum_{\lambda\neq 0,1} q_*^2(\lambda) \lesssim p^8 \d^{22/3} \tau^{-2} + \tau \sum_{\lambda} |q_* (\lambda)|
		\le
		   p^8 \d^{22/3} \tau^{-2}  + 2 \tau \d^3 p^3 \,.
$$
	The optimal choice of $\tau$ is $\tau = \tau_0 \sim \d^{13/9} p^{5/3} = a^{13/9} p^{2/9}$.
    Thus
$$
	\sum_{\lambda\neq 0,1} q_*^2(\lambda)
		\lesssim
			\tau_0 a^3
				\ll
					a^{40/9} p^{2/9} \,.
$$
	Returning to  (\ref{tmp:08.08_1}) and using
	$$
		-a \le q_* (0) = q_* (1) = a^2 -a - \frac{a^3}{p} \le a^2 \,,
	$$
	we obtain
$$
	|\T (A)  - \frac{a^6}{p}|   \lesssim   a^4 + a^{40/9} p^{2/9} \lesssim a^{40/9} p^{2/9}
$$
as required.

It remains to insure that  conditions  (\ref{tmp:08.08_c1}) takes place
and it is sufficient to check them  for $\tau \ge \tau_0$.
Put $\eps_1 = \D_1 / |A|$, $\eps_2 = \D_2 / |A|$.
Further it is easy to see that $\eps_1, \eps_2 \gg \tau / a^2 \ge \tau_0/ a^2$
and hence  (\ref{tmp:08.08_c1})
is a consequence of
the Parseval identity and the following estimates
$$
    |W_1| \le p/ (a \eps^2_{1}) < \d^{-1/6} \eps_{1}^{-2/3} \sqrt{p}
$$
or, in other words,
\begin{equation}\label{tmp:12.09.2016_1}
    \d^{1/6} \sqrt{p} = a^{1/6} p^{1/3} \ll a (a^{-5/9} p^{2/9})^{4/3} = a (\tau_0 /a^2)^{4/3} < a \eps_{1}^{4/3} \,.
\end{equation}
The first inequality in (\ref{tmp:12.09.2016_1}) follows from the condition $a> p^{2/5}$.
Similar bound takes place for the set $W_2$.
This completes the proof.
$\hfill\Box$
\end{proof}


\begin{remark}
    Let $p$ be a prime number and let  $A$ be a subfield of $\F_p^2$ of order $p$.
    Then $\T(A) \gg |A|^5 = p^5$ but bound (\ref{f:T_k_new}) (if it would holds in general fields) gives us $\T(A) \lesssim p^{40/9} \cdot (p^2)^{2/9} = p^{5-1/9}$.
    Thus, we need in the condition that $p$ is a prime number in the result above.
\end{remark}


Of course, estimate (\ref{f:T_k_new}) is an asymptotic formula just for sets $A$ with $|A| \gtrsim  p^{11/14}$.
For sets $A$, having the medium size $p^{2/5} < |A| \lesssim p^{11/14}$ inequality (\ref{f:T_k_new}) is just a non--trivial  upper bound for the quantity $\T(A)$.
Also, notice that one can improve bound (\ref{f:T_k_new}), using knowledge about $\| \FF{A}\|'_\infty$, see estimate (\ref{tmp:12.09.2016_2}).

\bigskip

Using formulas (\ref{f:T_1_f}), (\ref{f:T_1_f'}), (\ref{f:T_1_f''}) and the Cauchy--Schwarz inequality, we obtain

\begin{corollary}
    Suppose  $A \subseteq \F_p$ such that $|A| \gtrsim  p^{11/14}$.
	Then
$$
	|R[A]|
        \ge
            (1-o(1)) p
    \,.
$$
\end{corollary}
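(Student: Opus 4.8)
The plan is to derive the lower bound on $|R[A]|$ from the asymptotic formula for $\T(A)$ in Theorem \ref{t:T_k_new} together with a single application of the Cauchy--Schwarz inequality to the counting function $q(\lambda)$.

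First I would record two elementary facts about $q$. By the definition of $R[A]$ in (\ref{f:T_1_f'}) we have $q(\lambda)=0$ for $\lambda\notin R[A]$, and summing $q(\lambda)$ over all $\lambda\in\F_p$ simply counts all triples $(a_1,a_2,a)\in A^3$ with $a_2\neq a$ (the ratio $(a_1-a)/(a_2-a)$ being a well defined element of $\F_p$ for every such triple), so that
$$
    \sum_{\lambda\in R[A]} q(\lambda) = |A|^2(|A|-1) = (1-o(1))|A|^3 \,.
$$
Applying the Cauchy--Schwarz inequality and then formula (\ref{f:T_1_f''}) one gets
$$
    (1-o(1))|A|^6 = \Big(\sum_{\lambda\in R[A]} q(\lambda)\Big)^2 \le |R[A]| \sum_{\lambda\in R[A]} q^2(\lambda) = |R[A]|\,\big(\T(A)+O(|A|^4)\big) \,.
$$

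Next I would plug in Theorem \ref{t:T_k_new}, which gives $\T(A) = |A|^6/p + O\big(\log^C(p/|A|)\,|A|^{40/9}p^{2/9}\big)$ for $|A|>p^{2/5}$. The term $O(|A|^4)$ is $o(|A|^6/p)$ as soon as $|A|=\omega(\sqrt p)$, and the displayed error term is $o(|A|^6/p)$ exactly when $|A|^{14/9}\gg \log^{C}(p/|A|)\cdot p^{11/9}$, i.e. precisely in the assumed range $|A|\gtrsim p^{11/14}$. Hence $\T(A)+O(|A|^4)=(1+o(1))|A|^6/p$, and substituting into the Cauchy--Schwarz bound yields $(1-o(1))|A|^6 \le |R[A]|\cdot(1+o(1))|A|^6/p$, that is $|R[A]|\ge (1-o(1))p$; this is sharp up to the $o(1)$ since $R[A]\subseteq \F_p$.

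The only step that really needs attention is the error bookkeeping in the last paragraph: one must check that the threshold $|A|\gtrsim p^{11/14}$ is exactly the point at which the off--diagonal error in the asymptotic formula for $\T(A)$ drops below the main term $|A|^6/p$ (the exponent $11/14$ coming from $6-\tfrac{40}{9}=\tfrac{14}{9}$ against $1+\tfrac{2}{9}=\tfrac{11}{9}$). Everything else is the trivial identity $\sum_\lambda q(\lambda)=|A|^2(|A|-1)$ and one line of Cauchy--Schwarz.
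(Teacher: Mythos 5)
Your proof is correct and is exactly the argument the paper intends: Cauchy--Schwarz applied to $q(\lambda)$ supported on $R[A]$, the identity $\sum_\lambda q(\lambda)=|A|^2(|A|-1)$, formula (\ref{f:T_1_f''}), and the asymptotic formula of Theorem \ref{t:T_k_new}, whose error term is dominated by $|A|^6/p$ precisely when $|A|\gtrsim p^{11/14}$. The exponent bookkeeping ($14/9$ versus $11/9$) matches the paper's own remark that (\ref{f:T_k_new}) is an asymptotic formula only in this range, so nothing further is needed.
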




\bigskip

The last application of this section concerns mixed energies of a set.

In \cite{RSS}, see  Lemma 21, developing the investigations from \cite{Petridis}
(see Theorem 2 from here),
authors obtained a sum--product result for sets $A$, $p^{1/2}<|A|\leq p^{2/3}$, namely

\begin{lemma}\label{inccount}
Let $A\subseteq \mathbb{F}_p,$ $X\subseteq \mathbb{F}_p^*$.
Suppose
$|X|=O(|A|^2)$ and $|A|^2|X|=O(p^2)$. Then
$$\sum_{x\in X}\E^+(A,xA) \ll \E^+(A)^{1/2}       |A|^{3/2}|X|^{3/4} \,.$$
\end{lemma}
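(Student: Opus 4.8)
The plan is to decouple the mixed energy $\sum_{x\in X}\E^{+}(A,xA)$ into a purely additive factor and a purely multiplicative (sum--product) factor by a single application of the Cauchy--Schwarz inequality, and then to bound the multiplicative factor by Theorem \ref{t:RRS}. First I would rewrite each summand: since every $x\in X$ is nonzero, one has $((xA)\circ (xA))(s) = (A\circ A)(s/x)$, so by (\ref{f:energy_convolution})
$$
    \E^{+}(A,xA) = \sum_{s} (A\circ A)(s)\,(A\circ A)(s/x) \,.
$$
Summing over $x\in X$ and grouping by $s$ gives
$$
    \sum_{x\in X}\E^{+}(A,xA) = \sum_{s} (A\circ A)(s) \left( \sum_{x\in X} (A\circ A)(s/x) \right) \,,
$$
and Cauchy--Schwarz in $s$, together with $\sum_{s} (A\circ A)(s)^2 = \E^{+}(A)$ (again by (\ref{f:energy_convolution})), reduces the lemma to proving $\Sigma \ll |A|^3 |X|^{3/2}$, where
$$
    \Sigma := \sum_{s} \left( \sum_{x\in X} (A\circ A)(s/x) \right)^2 \,.
$$

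To handle $\Sigma$, I would observe that it counts exactly the sextuples $(a_1,a_2,b_1,b_2,x,x')\in A^4\times X^2$ with $x(a_1-a_2) = x'(b_1-b_2)$. Writing $a_1-a_2 = a_1 + (-a_2)$ and $b_1-b_2 = b_1 + (-b_2)$, this is precisely the number of solutions of $x(u_1+v_1) = x'(u_2+v_2)$ with $x,x'\in X$, $u_1,u_2\in A$ and $v_1,v_2\in -A$. Hence Theorem \ref{t:RRS} applies with $(X,A,-A)$ playing the role of $(A,B,C)$: since $|{-A}| = |A|$, the required condition there, $|X|\cdot|A|\cdot|{-A}| = O(p^2)$, is exactly our hypothesis $|A|^2|X| = O(p^2)$, and it gives
$$
    \Sigma \ll (|X| |A|^2)^{3/2} + |X| |A|^2 \max\{ |X|, |A| \} \,.
$$

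It remains to check that both terms are $\ll |A|^3 |X|^{3/2}$, which is where the assumption $|X| = O(|A|^2)$ enters. The first term equals $|A|^3|X|^{3/2}$ on the nose. For the second, if $|X| \le |A|$ then $|X||A|^2\max\{|X|,|A|\} = |X||A|^3 \le |X|^{3/2}|A|^3$, while if $|X| > |A|$ then it equals $|X|^2|A|^2$, and $|X| = O(|A|^2)$, i.e.\ $|X|^{1/2} \ll |A|$, yields $|X|^2|A|^2 \ll |A|^3|X|^{3/2}$; in either case $\Sigma \ll |A|^3|X|^{3/2}$, and therefore $\sum_{x\in X}\E^{+}(A,xA) \le \E^{+}(A)^{1/2}\Sigma^{1/2} \ll \E^{+}(A)^{1/2}|A|^{3/2}|X|^{3/4}$. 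I expect the only genuine obstacle to be the very first step: finding the right Cauchy--Schwarz splitting and recognizing that the resulting $\Sigma$, after the harmless rewriting $a_1-a_2 = a_1+(-a_2)$, has exactly the six--variable, one--equation shape governed by Theorem \ref{t:RRS}; the balancing of the two terms above against the target is then routine. (One could alternatively estimate $\Sigma$ through the Szemer\'edi--Trotter-type bound of Theorem \ref{t:SzT_Fp}, but Theorem \ref{t:RRS} delivers the stated exponents most directly.)
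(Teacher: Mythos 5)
Your argument is correct, and it checks out line by line: the identity $((xA)\circ(xA))(s)=(A\circ A)(s/x)$ for $x\neq 0$ plus (\ref{f:energy_convolution}) gives the decoupling, Cauchy--Schwarz in $s$ isolates $\E^{+}(A)^{1/2}$ and $\Sigma=\sum_s \big(\sum_{x\in X}(A\circ A)(s/x)\big)^2$, and $\Sigma$ is exactly the number of sextuples $(a_1,a_2,b_1,b_2,x,x')\in A^4\times X^2$ with $x(a_1-a_2)=x'(b_1-b_2)$, i.e.\ the collision count of Theorem \ref{t:RRS} for the triple $(X,A,-A)$; the hypothesis $|A|^2|X|=O(p^2)$ is precisely the condition required there, and the absorption of both terms into $|A|^3|X|^{3/2}$ uses $1\le|X|\ll|A|^2$ exactly as you indicate (the empty-$X$ case being trivial). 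Be aware, however, that the paper itself contains no proof of this statement: it is quoted verbatim from \cite{RSS} (Lemma 21 there, building on \cite{Petridis}), so there is no internal argument to compare against. Your route is a legitimate, self-contained derivation using only results already stated in this paper, and it is in the same spirit as the original: the proof in \cite{RSS} also rests on Rudnev's point--plane incidence bound \cite{misha}, which is the engine behind Theorem \ref{t:RRS}; you simply package the incidence input through the ready-made Theorem \ref{t:RRS} rather than running an incidence or Pl\"ucker-type count directly, which is arguably the cleaner path given the toolkit assembled here.
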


It is easy to see that the method of the proof of Theorem \ref{t:main} allows to obtain a similar result in the regime of
large
sets
$A$.

\bigskip

\begin{theorem}
	Let $A\subseteq \F_p$ be a nonempty set, $\d = |A|/p$.
	Then for any $X\subseteq \F_p^*$,
\begin{equation}\label{cond:E_lx_new}
    |X| \le |A|^{1/2} p^{-1/3}  (\| \FF{A}\|'_\infty)^{4/3}
\end{equation}
one has
\begin{equation}\label{f:E_lx_new}
	\left| \sum_{x \in X} \E^{+} (A, xA) - \frac{|X||A|^4}{p} \right|
        \lesssim
            \log^C (p/|A|) \cdot \d^{8/3} |X|^{1/2} p^{3} \cdot \left( \frac{\| \FF{A}\|'_\infty}{|A|} \right)^{2/3} \,,
\end{equation}
    where $C>0$ is an absolute constant.
	In particular, for any such $X$ the following holds
\begin{equation}\label{f:E_lx_new'}
	\left| \sum_{x \in X} \E^{+} (A, xA) - \frac{|X||A|^4}{p} \right| \lesssim \d^{8/3} |X|^{1/2} p^{3} \,.
\end{equation}
\label{t:E_lx_new}
\end{theorem}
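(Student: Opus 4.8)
The plan is to run, on the Fourier side, the same scheme as in the proofs of Theorems \ref{t:main} and \ref{t:T_k_new}. Since $\FF{xA}(\xi)=\FF{A}(x\xi)$ for $x\in\F^*_p$, identity (\ref{f:energy_Fourier}) gives $\E^{+}(A,xA)=p^{-1}\sum_{\xi}|\FF{A}(\xi)|^2|\FF{A}(x\xi)|^2$; separating the term $\xi=0$, which equals $|A|^4/p$, one obtains
$$
\sum_{x\in X}\E^{+}(A,xA)-\frac{|X||A|^4}{p}\;=\;\frac1p\sum_{\xi\neq 0}|\FF{A}(\xi)|^2\sum_{x\in X}|\FF{A}(x\xi)|^2\;=:\;T\,.
$$
Every summand is nonnegative, so $T\ge 0$ and the quantity on the left of (\ref{f:E_lx_new}) equals $T$; thus it suffices to bound $T$ from above. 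Write $\M:=\|\FF{A}\|'_\infty$ for brevity.

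The first step is to discard the part of $T$ where one of the two Fourier coefficients is small. Put $\D_0:=p^{1/4}|A|^{3/8}$ and split $T$ according to whether $|\FF{A}(\xi)|>\D_0$ and $|\FF{A}(x\xi)|>\D_0$ both hold, or not. If one of them fails, bound the small factor by $\D_0^2$; using the Parseval identity (\ref{F_Par}) in the form $\sum_{\xi\neq 0}|\FF{A}(\xi)|^2\le p|A|$ together with the fact that $\xi\mapsto x\xi$ permutes $\F^*_p$, the corresponding contribution to $T$ is at most $\D_0^2|X||A|=p^{1/2}|A|^{7/4}|X|$. A short computation with the exponents shows that, under hypothesis (\ref{cond:E_lx_new}), this is $\lesssim\d^{-1/3}|A|^{7/3}\M^{2/3}|X|^{1/2}=\d^{8/3}|X|^{1/2}p^{3}(\M/|A|)^{2/3}$, i.e.\ exactly the target bound in (\ref{f:E_lx_new}). (In the degenerate case $\M\le\D_0$ the whole of $T$ is of this type and is handled in the same way.)

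For the main part I would localise dyadically, losing a factor $\log^{O(1)}(p/|A|)$: one may assume there are numbers $\D_0<\D_1,\D_2\le\M$ and sets $W_1\subseteq\Spec_{\D_1/|A|}(A)$, $W_2\subseteq\Spec_{\D_2/|A|}(A)$ with $\D_1<|\FF{A}(\xi)|\le 2\D_1$ on $W_1$ and $\D_2<|\FF{A}(\eta)|\le 2\D_2$ on $W_2$, so that the main part does not exceed $p^{-1}\D_1^2\D_2^2\cdot|\{(\xi,x)\in W_1\times X:\ x\xi\in W_2\}|$. Here $|W_i|\le p|A|/\D_i^2$ by Parseval, and the choice of $\D_0$ is made precisely so that $\D_i>\D_0$ forces $|W_i|<\d^{-1/6}(|A|/\D_i)^{2/3}\sqrt p$, which is the size restriction needed to apply Theorem \ref{t:energy*spec} (and the first part of Proposition \ref{p:technical_mc}). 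The cardinality above equals $\sum_{\eta\in W_2}(W_1\otimes X)(\eta)$, so I would apply estimate (\ref{f:technical_mc1}) of Proposition \ref{p:technical_mc} with $B=W_1$, $C=X$, $D=W_2$ and $\eps=\D_1/|A|$, keeping the branch $|C|^{1/2}(\E^{\times}(D))^{1/4}$ of the minimum, and then bound $\E^{\times}(W_2)$ via Theorem \ref{t:energy*spec}. This yields
$$
\sum_{\eta\in W_2}(W_1\otimes X)(\eta)\;\lesssim\;\d^{-1/3}\left(\frac{|A|}{\D_1}\right)^{2/3}\left(\frac{|A|}{\D_2}\right)^{2/3}|W_1|^{1/2}|W_2|^{1/2}|X|^{1/2}\,.
$$

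Finally I would insert $|W_i|\le p|A|/\D_i^2$ and $\D_i\le\M$ into $p^{-1}\D_1^2\D_2^2\cdot\sum_{\eta\in W_2}(W_1\otimes X)(\eta)$. The powers of $\D_1,\D_2$ collapse to $\D_1^{1/3}\D_2^{1/3}\le\M^{2/3}$, the remaining factors combine, and a short computation (using $|A|=\d p$) gives the per--scale estimate $\lesssim\d^{-1/3}|A|^{7/3}\M^{2/3}|X|^{1/2}=\d^{8/3}|X|^{1/2}p^{3}(\M/|A|)^{2/3}$, uniformly in $\D_1,\D_2$. Summing over the $\log^{O(1)}(p/|A|)$ dyadic scales and adding the error term of the first step proves (\ref{f:E_lx_new}); the cruder bound (\ref{f:E_lx_new'}) then follows at once from $\M\le|A|$. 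The one place requiring genuine care is the bookkeeping of exponents in the first and last steps: both the negligibility of the error term and the validity of the size conditions for Theorem \ref{t:energy*spec} are arranged exactly by the choice $\D_0=p^{1/4}|A|^{3/8}$ together with hypothesis (\ref{cond:E_lx_new}), and there is no slack there.
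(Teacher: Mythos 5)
Your proposal is correct and follows essentially the same route as the paper: expand both energies on the Fourier side, discard the small Fourier coefficients, pigeonhole/dyadically decompose the spectrum into level sets $W_1,W_2$, and apply the first bound of Proposition \ref{p:technical_mc} together with Theorem \ref{t:energy*spec} and Parseval, with condition (\ref{cond:E_lx_new}) absorbing the truncation error. Your explicit cut-off $\D_0=p^{1/4}|A|^{3/8}$ is exactly equivalent to the paper's choice of $\eps$ (both amount to the threshold at which Parseval forces $|W_i|<\d^{-1/6}(\D_i/|A|)^{-2/3}\sqrt{p}$), so the difference is only in bookkeeping, which you carry out correctly.
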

\begin{proof}
	Using formula (\ref{f:energy_Fourier}), we get  as in the proof of Theorem \ref{t:main}
$$
	\sigma:= \sum_{x \in X} \E^{+} (A, xA) = \frac{|X||A|^4}{p} + \frac{1}{p} \sum_{x\in X} \sum_{r\neq 0} |\FF{A}(r)|^2 |\FF{A}(xr)|^2
	=
$$
\begin{equation}\label{tmp:27.07.1}
	=
		\frac{|X||A|^4}{p} + \theta |X| \eps^2 |A|^3 + \frac{1}{p} \sum_{x\in X} \sum_{r\in B} |\FF{A}(r)|^2 |\FF{A}(xr)|^2
		=
			\frac{|X||A|^4}{p} + \theta |X| \eps^2 |A|^3 + \sigma_1 \,,
\end{equation}
where $|\theta|\le 1$, $B=\Spec_\eps (A) \setminus \{0\}$ and
$\eps$ is a parameter,
\begin{equation}\label{f:eps_new}
    \eps^2 \sim (\| \FF{A}\|'_\infty)^{2/3} |X|^{-1/2} \d^{-1} p^{-2/3} \,.
\end{equation}
	Further applying bound (\ref{f:spec_Par}) and our assumption, we obtain
$$
	\d^{} (\| \FF{A}\|'_\infty)^{8/3} p^{1/3}
    = a p^{-2/3}  (\| \FF{A}\|'_\infty)^{8/3}
    \gtrsim |X|^2 \,.
$$
and hence
\begin{equation}\label{tmp:27.07_2}
	|B| \le \frac{1}{\d \eps^2} \lesssim \d^{-1/6} \eps^{-2/3} \sqrt{p} \,.
\end{equation}
Now using the pigeonholing  principle twice, we find two numbers $\Delta_1$, $\Delta_2$ and two sets $W_1,W_2 \subseteq B$ such that
$$
	p\sigma_1 \lesssim  \D_1^2 \D_2^2 \cdot \sum_{x\in X} \sum_r W_1 (r) W_2 (xr)
		=
			 \D_1^2 \D_2^2 \cdot \sum_{x\in X} (W^{-1}_1 \otimes W_2) (x)
$$
and $\D_1 < |\FF{A} (r)|\le 2 \D_1$ for $r\in W_1$, $\D_2 < |\FF{A} (r)|\le 2 \D_2$ for $r\in W_2$.
Applying
Parseval identity (\ref{F_Par}), we get
\begin{equation}\label{tmp:26.07_2'}
 	\D_1^2 |W_1| \le |A| p \,, \quad \quad \D_2^2 |W_2| \le |A| p \,.
\end{equation}
By (\ref{tmp:27.07_2}), we have $|W_1|, |W_2| \le |B| < \d^{-1/6} \eps^{-2/3} \sqrt{p}$.
Using
the first part of Proposition \ref{p:technical_mc} as well as Theorem \ref{t:energy*spec} and formula (\ref{tmp:26.07_2'}), we obtain
$$
	p\sigma_1 \lesssim \D_1^2 \D_2^2 |X|^{1/2} |W_1|^{1/2} |W_2|^{1/2} \d^{-1/3} (\D_1/|A|)^{-2/3} (\D_2/|A|)^{-2/3}
		=
$$
$$
		=
		\D_1^{4/3} \D_2^{4/3} |A|^{4/3} |X|^{1/2} |W_1|^{1/2} |W_2|^{1/2} \d^{-1/3}
			\le
				\D_1^{1/3} \D_2^{1/3} \d^2 p^{10/3} |X|^{1/2} \,.
$$
Using trivial bounds $\D_1, \D_2 \le \| \FF{A}\|'_\infty$ and our choice (\ref{f:eps_new}) of the parameter $\eps$, we obtain, returning to (\ref{tmp:27.07.1}) that
$$
	\left| \sigma - \frac{|X||A|^4}{p} \right| \lesssim |X| \eps^2 \d^3 p^3 + \d^{2} (\| \FF{A}\|'_\infty)^{2/3} |X|^{1/2} p^{7/3}
		\ll
			\d^{2} (\| \FF{A}\|'_\infty)^{2/3} |X|^{1/2} p^{7/3}
    =
$$
$$
    =
        \d^{8/3} |X|^{1/2} p^{3} \cdot \left( \frac{\| \FF{A}\|'_\infty}{|A|} \right)^{2/3} \,.
$$
This completes the proof.
$\hfill\Box$
\end{proof}

\bigskip

For example, if $|A| \le p/2$ then by Parseval identity  (\ref{F_Par}), we get $\| \FF{A}\|'_\infty \gg |A|^{1/2}$
and hence condition (\ref{cond:E_lx_new}) satisfies if $|X|\le |A|^{7/6} p^{-1/3}$.

Notice  that in bound (\ref{f:E_lx_new'}) the term $|X||A|^4/p$ dominates
if $|X| \gtrsim \d^{-8/3}$.
On the other hand, in view of trivial bound
$$
    \left| \sum_{x \in X} \E^{+} (A, xA) - \frac{|X||A|^4}{p} \right| < |A|^2 p
$$
which follows from formula (\ref{tmp:27.07.1}), we see that Theorem \ref{t:E_lx_new} has sense for sets $A$ with small $\| \FF{A}\|'_\infty$ only.

\bigskip

\noindent{I.D.~Shkredov\\
Steklov Mathematical Institute,\\
ul. Gubkina, 8, Moscow, Russia, 119991}
\\
and
\\
IITP RAS,  \\
Bolshoy Karetny per. 19, Moscow, Russia, 127994\\
{\tt ilya.shkredov@gmail.com}

\end{document}